\def\Luoma#1{\uppercase\expandafter{\romannumeral#1}}
\def\luoma#1{\romannumeral#1}
\newtheorem{mythm}{Theorem}[section]
\newtheorem{mylem}[mythm]{Lemma}
\newtheorem{myprop}[mythm]{Proposition}
\newtheorem{mycor}[mythm]{Corollary}
\theoremstyle{definition}
\newtheorem{mydefn}[mythm]{Definition}
\newtheorem{myexample}[mythm]{Example}
\theoremstyle{remark}
\newtheorem{myrem}[mythm]{Remark}
\newtheorem{mypara}[mythm]{}
\newcommand{\bb}{\mathbb}
\newcommand{\ca}{\mathcal}
\newcommand{\ak}{\mathfrak}
\newcommand{\scr}{\mathscr}
\newcommand{\mbf}{\mathbf}
\newcommand{\mrm}{\mathrm}
\def\op#1{\mathop{\mathrm{#1}}}
\newcommand{\ho}{\mrm{Hom}}
\newcommand{\ke}{\mrm{Ker}}
\newcommand{\cok}{\mrm{Coker}}
\newcommand{\im}{\mrm{Im}}
\newcommand{\df}{\mrm{d}}
\newcommand{\id}{\mrm{id}}
\newcommand{\tot}{\mrm{Tot}}
\newcommand{\spec}{\op{Spec}}
\newcommand{\colim}{\op{colim}}
\newcommand{\ob}{\mrm{Ob}}
\newcommand{\rr}{\mrm{R}}
\newcommand{\dl}{\mrm{L}}
\newcommand{\iso}{\stackrel{\sim}{\longrightarrow}}
\newcommand{\dd}{\mbf{D}}
\title{Almost Coherence of Higher Direct Images}
\author{Tongmu He}
\date{\today}
\address{Tongmu He, Institut des Hautes \'Etudes Scientifiques, 35 route de Chartres, 91440 Bures-sur-Yvette, France}
\email{hetm15@ihes.fr}
\numberwithin{equation}{mythm}
\begin{document}
	\maketitle
	
\begin{abstract}
	For a flat proper morphism of finite presentation between schemes with almost coherent structural sheaves (in the sense of Faltings), we prove that the higher direct images of quasi-coherent and almost coherent modules are quasi-coherent and almost coherent. Our proof uses Noetherian approximation, inspired by Kiehl's proof of the pseudo-coherence of higher direct images. Our result allows us to extend Abbes-Gros' proof of Faltings' main $p$-adic comparison theorem in the relative case for projective log-smooth morphisms of schemes to proper ones, and thus also their construction of the relative Hodge-Tate spectral sequence.
\end{abstract}

\footnotetext{\emph{2020 Mathematics Subject Classification} 14F06 (primary), 13D02, 14F30.\\Keywords: almost coherent, higher direct image, proper morphism, bounded torsion}
	
	\tableofcontents

\section{Introduction}

\begin{mypara}
One of the first important results in algebraic geometry is the fact that the coherence for modules is preserved by higher direct images by a proper morphism. The Noetherian case is due to Grothendieck \cite[3.2.1]{ega3-1}, and the general case is due to Kiehl \cite[2.9']{kiehl1972finite}. The goal of this article is to extend the following corollary to almost algebra, motivated by applications in $p$-adic Hodge theory.
\end{mypara}

\begin{mythm}[{Kiehl \cite[2.9']{kiehl1972finite}, see \cite[1.4.8]{abbes2010rigid}}]\label{thm:intro-rigid}
	Let $f:X\to S$ be a morphism of schemes satisfying the following conditions:
	\begin{enumerate}
		\renewcommand{\labelenumi}{{\rm(\theenumi)}}
		\item $f$ is proper and of finite presentation, and
		\item $\ca{O}_S$ is universally coherent.
	\end{enumerate}
	Then, for any coherent $\ca{O}_X$-module $\ca{M}$ and any $q\in\bb{N}$, $\rr^q f_*\ca{M}$ is a coherent $\ca{O}_S$-module.
\end{mythm}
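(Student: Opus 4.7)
The plan is to combine Kiehl's pseudo-coherence technique with Noetherian approximation, and then use universal coherence of $\ca{O}_S$ to upgrade pseudo-coherence of the cohomology modules to coherence. Since coherence is a local property on $S$, I would first reduce to the case $S=\spec A$ affine; then $\rr^q f_*\ca{M}$ is the quasi-coherent sheaf associated to the $A$-module $H^q(X,\ca{M})$, and the question becomes whether this module is coherent. Because $\ca{O}_S$ is universally coherent and $f$ is of finite presentation, the structure sheaf $\ca{O}_X$ is coherent as well, so that $\ca{M}$ is in particular finitely presented as an $\ca{O}_X$-module.

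Next, I would apply the standard approximation machinery of EGA IV \S8 to write $A=\ilim_{i\in I}A_i$ as a filtered colimit of its finitely generated $\bb{Z}$-subalgebras (in particular Noetherian), and descend the data to a proper finitely presented morphism $f_{i_0}:X_{i_0}\to\spec A_{i_0}$ equipped with a coherent sheaf $\ca{M}_{i_0}$ whose pullback to $X$ is $\ca{M}$. Over the Noetherian base $\spec A_{i_0}$, Grothendieck's theorem \cite[3.2.1]{ega3-1} ensures that each $H^q(X_{i_0},\ca{M}_{i_0})$ is a finitely generated $A_{i_0}$-module.

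The main obstacle is to transfer finite generation from $A_{i_0}$ to $A$, since $A$ need not be flat over $A_{i_0}$ and naive base change is blocked. Kiehl's key idea is to work instead with the bounded \v{C}ech complex $C^\bullet=C^\bullet(\mathfrak{U}_{i_0},\ca{M}_{i_0})$ associated to a finite affine cover $\mathfrak{U}_{i_0}$ of $X_{i_0}$, and to construct, by induction on $n$ using the finite generation of the cohomology of $C^\bullet$ as input, a bounded-above complex $P^\bullet$ of finite free $A_{i_0}$-modules together with a morphism $P^\bullet\to C^\bullet$ which is a quasi-isomorphism in degrees $\geq -n$. This exhibits $\rr f_{i_0,*}\ca{M}_{i_0}$ as a pseudo-coherent complex on $\spec A_{i_0}$.

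Finally, pseudo-coherence is stable under arbitrary base change, so pulling back the approximation $P^\bullet\to C^\bullet$ from $A_{i_0}$ to $A$ shows that $\rr f_*\ca{M}$ is pseudo-coherent on $S$. Each cohomology module $H^q(X,\ca{M})$ is therefore a pseudo-coherent $A$-module, hence finitely generated with finitely presented modules of syzygies; universal coherence of $\ca{O}_S$ forces such a module to be coherent, yielding the desired coherence of $\rr^q f_*\ca{M}$.
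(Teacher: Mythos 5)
The paper does not prove Theorem \ref{thm:intro-rigid}: it is quoted from Kiehl \cite[2.9']{kiehl1972finite} (see also \cite[1.4.8]{abbes2010rigid}) and used as a black box, with only a remark that it follows from Kiehl's general result on pushforward of relatively pseudo-coherent complexes because universal coherence of $\ca{O}_S$ makes every coherent $\ca{O}_X$-module relatively pseudo-coherent. So you are reconstructing Kiehl's argument, and your sketch has a genuine gap at the base-change step. You produce over the Noetherian approximation $A_{i_0}$ a bounded-above complex $P^\bullet$ of finite free $A_{i_0}$-modules and a map $P^\bullet\to C^\bullet_{i_0}:=\check{C}^\bullet(\mathfrak{U}_{i_0},\ca{M}_{i_0})$ which is a quasi-isomorphism in degrees $\geq -n$, and then claim that $P^\bullet\otimes_{A_{i_0}}A\to C^\bullet_{i_0}\otimes_{A_{i_0}}A=\check C^\bullet(\mathfrak{U},\ca{M})$ retains this property because ``pseudo-coherence is stable under base change.'' But that stability is for the \emph{derived} tensor product, while the \v Cech complex over $A$ is the \emph{underived} tensor of $C^\bullet_{i_0}$ with $A$; the two agree only if the terms of $C^\bullet_{i_0}$ are flat over $A_{i_0}$. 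Those terms are $\ca{M}_{i_0}(U_J)$ for affine $U_J\subset X_{i_0}$, and nothing forces them to be $A_{i_0}$-flat, since $f$ is not assumed flat in Theorem \ref{thm:intro-rigid}. The cone of $P^\bullet\to C^\bullet_{i_0}$, though acyclic in degrees $\geq -n$, is a bounded complex of non-flat modules, and after applying $-\otimes_{A_{i_0}}A$ it can acquire cohomology in that range; equivalently, $\rr\Gamma(X,\ca{M})$ need not agree with $A\otimes^\dl_{A_{i_0}}\rr\Gamma(X_{i_0},\ca{M}_{i_0})$ without a Tor-independence hypothesis.

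This is exactly the point where the paper's own Theorem \ref{thm:main} invokes the flatness of $f$: in Lemma \ref{lem:main-6}, the flatness of $f_{\lambda_0}$ makes the terms of the \v Cech complex of finite free $\ca{O}_{X_{\lambda_0,\bullet}}$-modules flat over $A_{\lambda_0}$, so that underived and derived base change coincide. Kiehl's Theorem \ref{thm:intro-rigid}, which assumes no flatness, gets around this differently: the resolutions are taken by finite free modules over ambient affine spaces $\bb{A}^m_{S_i}$ (this is what ``relative pseudo-coherence'' means), and these polynomial algebras, unlike $\ca{O}_{X}$ or $\ca{M}$, \emph{are} flat over $S$; the required flatness for the base change is thus supplied by the ambient affine space, not by $X$. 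Your sketch resolves directly over $A_{i_0}$ and therefore lacks this flat intermediary, which is the missing idea.
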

We say that $\ca{O}_S$ is \emph{universally coherent} if there is a covering $\{S_i=\spec(A_i)\}_{i\in I}$ of $S$ by affine open subschemes such that the polynomial algebra $A_i[T_1,\dots,T_n]$ is a coherent ring for any $i\in I$ and $n\in\bb{N}$. Indeed, such a condition on $\ca{O}_S$ implies that the coherent $\ca{O}_X$-module $\ca{M}$ is actually \emph{pseudo-coherent} relative to $S$, which roughly means that if we embed $X$ locally as a closed subscheme of $\bb{A}^n_{S_i}$, then $\ca{M}$ admits a resolution by finite free modules over $\bb{A}^n_{S_i}$. Theorem \ref{thm:intro-rigid} is a direct corollary of Kiehl's result \cite[2.9']{kiehl1972finite}, saying that the derived pushforward $\rr f_*$ sends a relative pseudo-coherent complex to a pseudo-coherent complex.

\begin{mypara}
	Almost algebra was introduced by Faltings \cite{faltings1988p, faltings2002almost} for the purpose of developing $p$-adic Hodge theory. The setting is a pair $(R,\ak{m})$ consisting of a ring $R$ with an ideal $\ak{m}$ such that $\ak{m}=\ak{m}^2$, and the rough idea is to replace the category of $R$-modules by its quotient by $\ak{m}$-torsion modules. An ``almost'' analogue of Theorem \ref{thm:intro-rigid} is necessary for Faltings' approach to $p$-adic Hodge theory. Indeed, under the same assumptions of \ref{thm:intro-rigid}, Abbes-Gros \cite[2.8.14]{abbes2020suite} proved that $\rr^qf_*$ sends a quasi-coherent and almost coherent $\ca{O}_X$-module to a quasi-coherent and almost coherent $\ca{O}_S$-module, by reducing directly to \ref{thm:intro-rigid}. This result plays a crucial role in the proof of Faltings' main $p$-adic comparison theorem in the absolute case (see \cite[4.8.13]{abbes2020suite}), and thus of the Hodge-Tate decomposition (see \cite[6.4.14]{abbes2020suite}). Later, Zavyalov \cite[5.1.6]{zavyalov2021almost} extended the same almost coherence result to formal schemes.
	
	However, the almost coherence result \cite[2.8.14]{abbes2020suite} is not enough for Faltings' main $p$-adic comparison theorem in the relative case (thus neither for the relative Hodge-Tate spectral sequence), since we inevitably encounter the situation where $\ca{O}_S$ is \emph{universally almost coherent} but not universally coherent. Thus, under the assumptions that
	\begin{enumerate}
		\renewcommand{\labelenumi}{{\rm(\theenumi)}}
		\item $f$ is projective, flat and of finite presentation, and that
		\item $\ca{O}_S$ is universally almost coherent,
	\end{enumerate}
	Abbes-Gros proved an almost coherence result \cite[2.8.18]{abbes2020suite} by adapting the arguments of \cite[\Luoma{3}.2.2]{sga6}, where the projectivity condition on $f$ plays a crucial role. This is the reason why Faltings' main $p$-adic comparison theorem in the relative case (and thus the relative Hodge-Tate spectral sequence) was only proved for projective log-smooth morphisms in \cite[5.7.4 (and 6.7.5)]{abbes2020suite}.
\end{mypara}

\begin{mypara}
	In this article, we generalize the almost coherence result \cite[2.8.18]{abbes2020suite} to proper morphisms, which allows us to extend Abbes-Gros' proof of Faltings' main $p$-adic comparison theorem in the relative case to proper log-smooth morphisms, and thus also their construction of the relative Hodge-Tate spectral sequence (see Section \ref{sec:remark}).
	
	Let $R$ be a ring with an ideal $\ak{m}$ such that for any integer $l\geq 1$, the $l$-th powers of elements of $\ak{m}$ generate $\ak{m}$. The pair $(R,\ak{m})$ will be our basic setup for almost algebra (see Section \ref{sec:coh}). The main theorem of this article is the following
\end{mypara}

\begin{mythm}[see \ref{thm:main}]\label{thm:intro-main}
	Let $f:X\to S$ be a morphism of $R$-schemes satisfying the following conditions:
	\begin{enumerate}
		\renewcommand{\labelenumi}{{\rm(\theenumi)}}
		\item $f$ is proper, flat and of finite presentation, and
		\item $\ca{O}_X$ and $\ca{O}_S$ are almost coherent.
	\end{enumerate}  
	Then, for any quasi-coherent and almost coherent $\ca{O}_X$-module $\ca{M}$ and any $q\in\bb{N}$, $\rr^q f_*\ca{M}$ is a quasi-coherent and almost coherent $\ca{O}_S$-module.
\end{mythm}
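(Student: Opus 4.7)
The plan is to adapt Kiehl's proof of Theorem~\ref{thm:intro-rigid} by combining Noetherian approximation of the morphism with an $\ak{m}$-adic approximation of $\ca{M}$ by coherent modules descending to the approximation. The statement being local on $S$, I first reduce to the case $S = \spec(A)$ affine. The hypothesis that $f$ is of finite presentation then allows the limit formalism of EGA IV, \S 8, to write $A = \colim_{\lambda} A_\lambda$ with each $A_\lambda$ finitely generated over $\bb{Z}$ (hence Noetherian), together with a proper, flat, finitely presented morphism $f_\lambda : X_\lambda \to S_\lambda = \spec(A_\lambda)$ for $\lambda$ large such that $X = X_\lambda \times_{S_\lambda} S$. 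In particular, each $X_\lambda$ is Noetherian with universally coherent structure sheaf, so Kiehl's theorem applies to $f_\lambda$.

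The core step is to approximate the almost coherent $\ca{O}_X$-module $\ca{M}$ by coherent modules on such $X_\lambda$. For every $\epsilon \in \ak{m}$ I seek $\lambda$, a coherent $\ca{O}_{X_\lambda}$-module $\ca{M}_{\lambda,\epsilon}$, and an $\ca{O}_X$-linear map $\varphi_\epsilon : \ca{M}_{\lambda,\epsilon}|_X \to \ca{M}$ with kernel and cokernel annihilated by $\epsilon$. Locally on an affine $\spec(B) \subset X$, almost coherence of $\ca{M}$ immediately yields such an approximation by a finitely presented $B$-module, descending to $X_\lambda$ for $\lambda$ large by finite presentation. Patching these local approximations into a global coherent model on some $X_\lambda$ will be the main technical obstacle, since the cocycle conditions on overlaps of a finite affine cover of $X$ (which exists by quasi-compactness) hold only up to $\ak{m}$-torsion; the hypothesis that $\ak{m}$ is generated by arbitrarily high powers of its elements is exactly what absorbs the resulting bounded degradation of annihilators when one glues in the almost category.

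Granted these approximations, Kiehl's theorem shows that $\rr^q f_{\lambda *}\ca{M}_{\lambda,\epsilon}$ is a coherent $\ca{O}_{S_\lambda}$-module. A derived base-change argument using flatness of $f_\lambda$ and the filtered colimit $A = \colim A_\lambda$ then relates its pullback to $S$ with $\rr^q f_*(\ca{M}_{\lambda,\epsilon}|_X)$, the discrepancy being controlled by Tor terms that can be absorbed by shrinking $\epsilon$. A spectral-sequence/truncation argument applied to the cone of $\varphi_\epsilon$ bounds the kernel and cokernel of the induced map $\rr^q f_* \varphi_\epsilon$ by a power of $\epsilon$ depending only on the cohomological dimension of $f$. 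Letting $\epsilon$ range through $\ak{m}$ and invoking the hypothesis that $\ak{m} = \ak{m}^n$ for every $n$ finally shows $\rr^q f_*\ca{M}$ is almost finitely presented over $\ca{O}_S$, hence almost coherent since $\ca{O}_S$ is; quasi-coherence is preserved throughout since both the approximating modules and their cohomology are quasi-coherent.
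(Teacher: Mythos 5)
Your overall strategy is on the right track, and the surrounding steps are sound: reducing to $S=\spec(A)$ affine, writing $A=\colim A_\lambda$ over Noetherian $A_\lambda$, descending $f$ to a proper flat $f_\lambda$, applying the classical coherence theorem on $X_\lambda$, flat base change back to $S$, and letting $\pi$ range over $\ak{m}$ at the end. This matches the paper's outline. The gap lies at the step you yourself flag as ``the main technical obstacle'': the existence of a \emph{globally defined} coherent $\ca{O}_{X_\lambda}$-module $\ca{M}_{\lambda,\epsilon}$ whose base change to $X$ is $\epsilon$-isomorphic to $\ca{M}$. You are right that locally on an affine $\spec(B)\subset X$, almost coherence produces a finitely presented $B$-module $\pi$-isomorphic to $\ca{M}(U)$: take a $\pi$-surjection $B^{\oplus n}\to\ca{M}(U)$, choose a finite-type $K_0$ in the kernel $K$ with $\pi K\subseteq K_0$ (possible because $K$ is of almost finite type), and observe that $B^{\oplus n}/K_0\to\ca{M}(U)$ is a $\pi$-isomorphism. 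But these local approximants carry no simplicial structure over the \v Cech nerve of the cover. The pairwise comparison maps between them over the overlaps, built by the retraction trick from Lemma~\ref{lem:pi-iso-retract}, are only $\pi$-power-isomorphisms and satisfy the cocycle condition only after multiplication by a power of $\pi$. Your suggestion that the hypothesis $\ak{m}=\ak{m}^n$ ``absorbs the resulting bounded degradation'' addresses the wrong obstruction: that hypothesis handles the accumulation of torsion \emph{bounds}, but it does not produce the genuine simplicial compatibility needed before any gluing lemma (such as Proposition~\ref{prop:cech-glue-sheaf}) can be applied, and it does not make a finite-type quasi-coherent module on $X$ finitely presented (which is what Noetherian descent requires).

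The paper avoids constructing any coherent approximant to $\ca{M}$ whatsoever. Instead (Lemma~\ref{lem:almost-coh-resol}) it $\pi$-resolves $\nu^*\ca{M}$ by a bounded complex of finite \emph{free} modules $\ca{F}_\bullet^\bullet$ over the truncated \v Cech hypercovering $X_\bullet$; crucially, the shape $\ca{F}_n^0=\bigoplus_{m,\alpha}\alpha^*N_m$ makes this a \emph{genuine} simplicial object, even though it is only $\pi$-exact. The almost-Cartesian defect is then encoded not as approximate cocycle conditions but as explicit null-homotopies on the cones $C_\alpha^\bullet$ (Lemma~\ref{lem:main-1}), which are concrete $R$-linear identities among finite free modules and hence descend under Noetherian approximation. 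The gluing (Proposition~\ref{prop:cech-glue-sheaf}, via Lemma~\ref{lem:main-3}) is then performed on the Noetherian $X_{\lambda_0}$ and applied to the \emph{cohomology sheaves} $H^i(\ca{F}_{\lambda_0,\bullet}^\bullet)$ of the descended complex --- where finite type already implies coherent --- producing honest coherent modules $\ca{G}_{\lambda_0}^i$ to which the classical theorem applies. The connection back to $\rr\Gamma(X,\ca{M})$ goes through the \v Cech complex, flatness of $f_{\lambda_0}$, and the bookkeeping of ``$\pi$-pseudo-coherence'' developed in Section~\ref{sec:ps-coh}, rather than through the higher direct image of a coherent approximant of $\ca{M}$. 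Without this reorganization, the approximation-and-glue step of your argument cannot be carried out as stated.
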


Our proof is close to Kiehl's proof of \cite[2.9]{kiehl1972finite}, see Section \ref{sec:proof}. We roughly explain our ideas in the following:
\begin{enumerate}
	\renewcommand{\labelenumi}{{\rm(\theenumi)}}
	\item We may assume without loss of generality that $S$ is affine. As $\ca{M}$ is not of finite presentation over $X$ in general, we couldn't descend it by Noetherian approximation. But $\ca{M}$ is almost coherent, for any $\pi\in\ak{m}$ we can ``$\pi$-resolve'' $\ca{M}$ over a truncated \v Cech hypercovering $X_\bullet=(X_n)_{[n]\in\Delta_{\leq k}}$ (by affine open subschemes) of $X$ by finite free modules $\ca{F}_\bullet^\bullet$ as in \cite[2.2]{kiehl1972finite}, where each $\ca{F}_n^\bullet$ is a ``resolution'' of $\ca{M}|_{X_n}$ modulo $\pi$-torsion, see Section \ref{sec:coh}. By Noetherian approximation, we obtain a proper flat morphism $f_\lambda:X_\lambda\to S_\lambda$ of Noetherian schemes together with a complex of finite free modules $\ca{F}_{\lambda,\bullet}^\bullet$ over a truncated \v Cech hypercovering of $X_\lambda$ descending $f$ and $\ca{F}_\bullet^\bullet$.
	\item As in \cite[1.4]{kiehl1972finite}, the descent data of $\ca{M}$ over $X_\bullet$ are encoded as null homotopies of the multiplication by a certain power of $\pi$ on the cone of $\alpha^*\ca{F}_{m}^\bullet\to \ca{F}_n^\bullet$ (where $\alpha:[m]\to [n]$ is a morphism in the truncated simplicial category $\Delta_{\leq k}$), see Section \ref{sec:pi-isom}. We can descend the latter by Noetherian approximation, from which we produce some coherent modules over $X_\lambda$, see Section \ref{sec:glue}.
	\item Applying the classical coherence result for $f_\lambda:X_\lambda\to S_\lambda$, we see that the \v Cech complex of $\ca{F}_{\lambda,\bullet}^\bullet$ is ``pseudo-coherent'' modulo certain power of $\pi$, see Section \ref{sec:ps-coh}. The same thing holds for the \v Cech complex of $\ca{F}_{\bullet}^\bullet$ by base change (due to the flatness of $f_\lambda$). Since this \v Cech complex computes $\rr\Gamma(X,\ca{M})$ up to certain degree and modulo certain power of $\pi$, the conclusion follows by varying $\pi$ in $\ak{m}$.
\end{enumerate}

\subsection*{Acknowledgements}
This work was completed while I was a doctoral student at Universit\'e Paris-Saclay and Institut des Hautes \'Etudes Scientifiques, under the supervision of Ahmed~ Abbes. I am deeply grateful to him for posing the question, conducting a meticulous review of this work, and offering numerous helpful suggestions. Additionally, I would like to extend my appreciation to the anonymous referee for their incredibly thorough examination and detailed feedback, which have further enhanced the clarity and depth of this article.

\section{Notation and Conventions}
\begin{mypara}
	All rings considered in this article are unitary and commutative.
\end{mypara}

\begin{mypara}\label{para:notation-complex}
	Let $\scr{A}$ be an abelian category. When we consider ``a complex in $\scr{A}$'', we always refer to a cochain complex in $\scr{A}$, and we denote it by $M^\bullet$ with differential maps $\df^n:M^n\to M^{n+1}$ ($n\in\bb{Z}$). For any $a\in \bb{Z}$, we denote by $\tau^{\geq a}M^\bullet$ (resp. $\sigma^{\geq a}M^\bullet$) the canonical (resp. stupid) truncation of $M^\bullet$, see \cite[\href{https://stacks.math.columbia.edu/tag/0118}{0118}]{stacks-project}.
\end{mypara}

\begin{mypara}
	Let $\Delta$ be the category formed by finite ordered sets $[n]=\{0,1,\dots,n\}$ ($n\in\bb{N}$) with non-decreasing maps (\cite[\href{https://stacks.math.columbia.edu/tag/0164}{0164}]{stacks-project}). For $k\in \bb{N}\cup\{\infty\}$, we denote by $\Delta_{\leq k}$ the full subcategory of $\Delta$ formed by objects $[0],[1],\dots,[k]$. For a category $C$, a contravariant functor from $\Delta_{\leq k}$ to $C$ sending $[n]$ to $X_n$ is called a \emph{$k$-truncated simplicial object} of $C$, denoted by $X_\bullet$. Let $\ca{P}$ be a property for objects of $C$. We say that $X_\bullet$ has property $\ca{P}$ if each $X_n$ has property $\ca{P}$.
\end{mypara}

\section{Isomorphisms up to Bounded Torsion}\label{sec:pi-isom}
In this section, we fix a ring $R$ and an element $\pi$ of $R$. Consider an abelian category $\scr{A}$ with a ring homomorphism $R\to \mrm{End}(\id_{\scr{A}})$, where $\mrm{End}(\id_{\scr{A}})$ is the ring of endomorphisms of the identity functor. Thus, $\pi$ defines a functorial endomorphism on any object $M$ of $\scr{A}$. We denote by $\mbf{K}(\scr{A})$ the homotopy category of complexes in $\scr{A}$.
\begin{mydefn}\label{defn:pi-iso}
	\begin{enumerate}
		\renewcommand{\labelenumi}{{\rm(\theenumi)}}
		\item We say that an object $M$ in $\scr{A}$ is \emph{$\pi$-null} if it is annihilated by $\pi$. We say that a morphism $f:M\to N$ in $\scr{A}$ is a \emph{$\pi$-isomorphism} if its kernel and cokernel are $\pi$-null.
		\item We say that a complex $M^\bullet$ in $\scr{A}$ is \emph{$\pi$-exact} if the cohomology group $H^n(M^\bullet)$ is $\pi$-null for any $n\in\bb{Z}$. We say that a morphism of complexes $f:M^\bullet\to N^\bullet$ in $\scr{A}$ is a \emph{$\pi$-quasi-isomorphism} if it induces a $\pi$-isomorphism on the cohomology groups $H^n(f):H^n(M^\bullet)\to H^n(N^\bullet)$ for any $n\in\bb{Z}$.
	\end{enumerate} 
\end{mydefn}

\begin{mylem}[{\cite[2.6.3]{abbes2020suite}}]\label{lem:pi-iso-retract}
	Let $f:M\to N$ be a morphism in $\scr{A}$.
	\begin{enumerate}
		\renewcommand{\labelenumi}{{\rm(\theenumi)}}
		\item If there exists a morphism $g:N\to M$ in $\scr{A}$ such that $g\circ f=\pi\cdot\id_M$ and $f\circ g=\pi\cdot\id_N$, then $f$ is a $\pi$-isomorphism.\label{item:lem:pi-iso-retract-2}
		\item If $f$ is a $\pi$-isomorphism, then $\pi\cdot\id_N$ and $\pi\cdot\id_M$ uniquely factor through $N\to \im(f)$ and $\im(f)\to M$ respectively, whose composition $g:N\to M$ satisfies that $g\circ f=\pi^2\cdot\id_M$ and $f\circ g=\pi^2\cdot\id_N$. In particular, the morphism $g$ is functorial in $f$.\label{item:lem:pi-iso-retract-1}
	\end{enumerate}
\end{mylem}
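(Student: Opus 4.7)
For part \ref{item:lem:pi-iso-retract-2}, the plan is to use the universal properties of kernel and cokernel. Let $\iota:\ke(f)\to M$ be the canonical inclusion. Since $f\circ\iota=0$, composing the identity $g\circ f=\pi\cdot\id_M$ with $\iota$ gives $\pi\cdot\iota=0$; as $\iota$ is a monomorphism, $\pi\cdot\id_{\ke(f)}=0$. Dually, let $p:N\to\cok(f)$ be the canonical projection. From $p\circ f=0$ and $f\circ g=\pi\cdot\id_N$ one gets $\pi\cdot p=0$, and as $p$ is an epimorphism, $\pi\cdot\id_{\cok(f)}=0$. These are the two assertions needed for $f$ to be a $\pi$-isomorphism.

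For part \ref{item:lem:pi-iso-retract-1}, factor $f=i\circ p$ through its image, where $p:M\surj\im(f)$ and $i:\im(f)\inj N$, so that $\ke(p)=\ke(f)$ and $\cok(i)=\cok(f)$ are both $\pi$-null. The plan is to build $g$ by constructing the two factorizations separately. First, since $\pi\cdot\id_N$ composed with $N\surj\cok(f)$ vanishes, the universal property of $\im(f)=\ke(N\to\cok(f))$ produces a unique morphism $\alpha:N\to\im(f)$ with $i\circ\alpha=\pi\cdot\id_N$. Dually, since $\ke(f)\inj M$ composed with $\pi\cdot\id_M$ vanishes, the universal property of $\im(f)=\cok(\ke(f)\to M)$ produces a unique morphism $\beta:\im(f)\to M$ with $\beta\circ p=\pi\cdot\id_M$. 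Set $g:=\beta\circ\alpha$.

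Next, the plan is to derive the two auxiliary identities $\alpha\circ i=\pi\cdot\id_{\im(f)}$ and $p\circ\beta=\pi\cdot\id_{\im(f)}$ from the defining relations by post- (resp.\ pre-)composing with $i$ (resp.\ $p$) and using that $i$ is mono and $p$ is epi. With these in hand, the computations
\[ g\circ f=\beta\circ\alpha\circ i\circ p=\beta\circ(\pi\cdot\id_{\im(f)})\circ p=\pi\cdot(\beta\circ p)=\pi^2\cdot\id_M, \]
\[ f\circ g=i\circ p\circ\beta\circ\alpha=i\circ(\pi\cdot\id_{\im(f)})\circ\alpha=\pi\cdot(i\circ\alpha)=\pi^2\cdot\id_N \]
give the required relations. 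I do not anticipate a serious obstacle; the only mildly delicate point is tracking the universal properties of the image factorization in an abstract abelian category, which is handled cleanly by the mono/epi cancellation above.

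Finally, for functoriality, given a morphism of arrows $(u,v):f\to f'$, the induced map $\bar u:\im(f)\to\im(f')$ satisfies $v\circ i=i'\circ\bar u$ and $\bar u\circ p=p'\circ u$. Post-composing $\alpha'\circ v$ and $\bar u\circ\alpha$ with $i'$ both give $\pi\cdot v$, so the mono property of $i'$ yields $\alpha'\circ v=\bar u\circ\alpha$; dually, pre-composing $u\circ\beta$ and $\beta'\circ\bar u$ with $p$ both give $\pi\cdot u$, so the epi property of $p$ yields $u\circ\beta=\beta'\circ\bar u$. Combining these produces $g'\circ v=u\circ g$, which is the claimed functoriality.
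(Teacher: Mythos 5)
Your proof is correct. The paper does not spell out a proof of this lemma but simply cites \cite[2.6.3]{abbes2020suite}; the argument you give — using the universal properties of kernel, cokernel, and the image factorization $f = i\circ p$, together with mono/epi cancellation to obtain the auxiliary identities $\alpha\circ i = \pi\cdot\id_{\im(f)} = p\circ\beta$ and to establish functoriality — is precisely the expected one and is complete.
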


\begin{mylem}\label{lem:pi-exact-homotopy}
	Let $f:M^\bullet\to N^\bullet$ be a morphism of complexes in $\scr{A}$. Assume the following conditions:
	\begin{enumerate}
		\renewcommand{\labelenumi}{{\rm(\theenumi)}}
		\item for any $i> 0$, $M^i=N^i=0$;
		\item there is $n\in \bb{N}$ such that for any $-n\leq i\leq 0$, $M^i$ is projective and $\pi\cdot H^i(N^\bullet)=0$.
	\end{enumerate}
	Then, there exists a morphism $s^i:M^i\to N^{i-1}$ for any $-n\leq i\leq 0$, such that 
	\begin{align}\label{eq:lem:pi-exact-homotopy}
		\pi^{1-i}\cdot f^i=\pi\cdot s^{i+1}\circ \df^i+\df^{i-1}\circ s^i
	\end{align}
	as morphisms from $M^i$ to $N^i$, where we put $s^1=0$. In particular, the morphism of canonically truncated complexes
	\begin{align}
		\pi^{n+1}\cdot f:\tau^{\geq -n}M^\bullet \longrightarrow \tau^{\geq -n}N^\bullet
	\end{align}
	is homotopic to zero.
\end{mylem}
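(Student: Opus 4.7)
The plan is to construct the $s^i$ by descending induction on $i$, starting at $i=0$ and working down to $i=-n$, exploiting the projectivity of each $M^i$ and the hypothesis $\pi\cdot H^i(N^\bullet)=0$. For the base case $i=0$, since $M^1=N^1=0$ equation~(\ref{eq:lem:pi-exact-homotopy}) reduces to $\df^{-1}\circ s^0 = \pi\cdot f^0$; the image of $\pi\cdot f^0$ lies in $\pi\cdot N^0 = \pi\cdot\ke(\df^0)\subseteq \im(\df^{-1})$ by $\pi\cdot H^0(N^\bullet)=0$, and projectivity of $M^0$ lifts $\pi\cdot f^0$ along $\df^{-1}$ to produce $s^0$.

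For the inductive step, assuming $s^0,\dots,s^{i+1}$ are in hand (with $s^1=0$), the plan is to form the defect
\begin{align*}
g^i \;:=\; \pi^{1-i}\cdot f^i - \pi\cdot s^{i+1}\circ\df^i \;=\; \pi\cdot h^i,\qquad h^i \;:=\; \pi^{-i}\cdot f^i - s^{i+1}\circ\df^i,
\end{align*}
the factorisation being legitimate because $1-i\geq 1$. Combining the chain-map identity $\df^i\circ f^i = f^{i+1}\circ\df^i$ with the inductive equation $\pi^{-i}\cdot f^{i+1} = \pi\cdot s^{i+2}\circ\df^{i+1} + \df^i\circ s^{i+1}$ at degree $i+1$, a direct computation will give $\df^i\circ h^i=0$, so that $h^i$ factors through $\ke(\df^i)\subseteq N^i$. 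The hypothesis $\pi\cdot H^i(N^\bullet)=0$ then forces $g^i(M^i)\subseteq \pi\cdot\ke(\df^i)\subseteq\im(\df^{i-1})$, and projectivity of $M^i$ supplies the lift $s^i\colon M^i\to N^{i-1}$ with $\df^{i-1}\circ s^i = g^i$, which is precisely (\ref{eq:lem:pi-exact-homotopy}).

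For the final assertion, I set $t^i:=\pi^{n+i}\cdot s^i$ for $-n<i\leq 0$ and $t^i:=0$ otherwise. Multiplying (\ref{eq:lem:pi-exact-homotopy}) by $\pi^{n+i}$ gives $\pi^{n+1}\cdot f^i = t^{i+1}\circ\df^i + \df^{i-1}\circ t^i$ for $-n<i\leq 0$, while at $i=-n$ the leftover term $\df^{-n-1}\circ s^{-n}$ lies in $\im(\df^{-n-1})$ and hence vanishes after passing to the canonical truncation $\tau^{\geq -n}N^\bullet$. The only real difficulty lies in the bookkeeping of powers of $\pi$: the exponent $\pi^{1-i}$ in (\ref{eq:lem:pi-exact-homotopy}) is precisely what guarantees that $g^i$ retains a factor of $\pi$ after the correction term is removed, enabling $\pi\cdot H^i(N^\bullet)=0$ to be invoked; the accumulated power $\pi^{n+1}$ in the final assertion then matches the $n+1$ induction steps.
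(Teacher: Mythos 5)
Your proof is correct and follows essentially the same descending induction as the paper's: the defect $h^i=\pi^{-i}\cdot f^i-s^{i+1}\circ\df^i$ is shown to land in $\ke(\df^i)$ via the chain-map identity and the inductive hypothesis, $\pi\cdot H^i(N^\bullet)=0$ pushes $\pi\cdot h^i$ into $\im(\df^{i-1})$, and projectivity of $M^i$ supplies the lift; the rescaling $t^i=\pi^{n+i}\cdot s^i$ for the final assertion is exactly the paper's equation \eqref{eq:3.3.4}. The only cosmetic difference is that the paper starts the induction with $s^1=s^2=\cdots=0$ and uniformly takes $\pi^{n+i}\cdot s^i$ at $i=-n$ as well, whereas you spell out the base case separately and set $t^{-n}=0$, but both conventions give the same null-homotopy on the canonical truncations.
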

\begin{proof}
	We construct $s^i$ by induction. Setting $0=s^1=s^2=\cdots$, we may assume that we have already constructed the homomorphisms for any degree strictly bigger than $i$ with identities \eqref{eq:lem:pi-exact-homotopy}. As $\pi^{-i}\cdot f^{i+1}=\pi\cdot s^{i+2}\circ \df^{i+1}+\df^{i}\circ s^{i+1}$, we see that
	\begin{align}
		\df^{i}\circ(\pi^{-i}\cdot f^i-s^{i+1}\circ\df^{i})=\pi^{-i}\cdot f^{i+1}\circ\df^{i}-(\pi^{-i}\cdot f^{i+1}-\pi\cdot s^{i+2}\circ \df^{i+1})\circ \df^{i}=0.
	\end{align}
	Thus, the map $\pi^{-i}\cdot f^i-s^{i+1}\circ\df^{i}:M^i\to N^i$ factors through $\ke(\df^i:N^i\to N^{i+1})$. The assumption $\pi\cdot H^i(N^\bullet)=0$ implies that the map $\pi^{1-i}\cdot f^i-\pi\cdot s^{i+1}\circ\df^{i}:M^i\to N^i$ factors through $\im(\df^{i-1}:N^{i-1}\to N^{i})$. As $M^i$ is projective, there exists a morphism $s^i:M^i\to N^{i-1}$ such that $\pi^{1-i}\cdot f^i-\pi\cdot s^{i+1}\circ\df^{i}=\df^{i-1}\circ s^i$, which completes the induction.
	In particular, for any $i\geq -n$, we have
	\begin{align}\label{eq:3.3.4}
		\pi^{n+1}\cdot f^i=(\pi^{n+i+1}\cdot s^{i+1})\circ \df^i+\df^{i-1}\circ (\pi^{n+i}\cdot s^i).
	\end{align}
	Recall that $\tau^{\geq -n}M^\bullet=(0\to M^{-n}/\im(\df^{-n-1})\to M^{1-n}\to\cdots \to M^0\to 0)$. Thus, we see that $\pi^{n+1}\cdot f:\tau^{\geq -n}M^\bullet \to \tau^{\geq -n}N^\bullet$ is homotopic to zero.
\end{proof}

\begin{myprop}\label{prop:pi-exact-homotopy}
	Let $P^\bullet$ be a complex of projective objects in $\scr{A}$, and let $M^\bullet$ be a $\pi$-exact complex in $\scr{A}$. Assume that there are integers $a\leq b$ such that $P^i$ and $M^i$ vanish for any $i\notin[a,b]$. Then, the $R$-module $\ho_{\mbf{K}(\scr{A})}(P^\bullet,M^\bullet)$ is $\pi^{b-a+1}$-null.
\end{myprop}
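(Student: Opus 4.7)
The statement is that every morphism $f: P^\bullet \to M^\bullet$ in the homotopy category becomes zero after being multiplied by $\pi^{b-a+1}$, so the plan is to apply Lemma \ref{lem:pi-exact-homotopy} essentially verbatim, after a harmless shift that aligns the boundedness assumptions with those of the lemma.

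First, I would set $n = b - a$ and consider the shifted complexes $P^\bullet[b]$ and $M^\bullet[b]$, which are concentrated in degrees $[-n,0]$. Since shifting is an auto-equivalence of $\mbf{K}(\scr{A})$, it suffices to prove that every morphism from $P^\bullet[b]$ to $M^\bullet[b]$ in $\mbf{K}(\scr{A})$ is killed by $\pi^{n+1}$. After this reduction, I am in the exact situation of Lemma \ref{lem:pi-exact-homotopy}: the source has only projective components in degrees $-n, \ldots, 0$ and vanishes in positive degrees, and the target has $\pi$-null cohomology in those degrees (in fact, everywhere, by $\pi$-exactness) and also vanishes in positive degrees.

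Invoking Lemma \ref{lem:pi-exact-homotopy} then produces, for any representative $f$, a null-homotopy of the induced map on canonical truncations
\[
\pi^{n+1}\cdot f:\tau^{\geq -n}P^\bullet[b] \longrightarrow \tau^{\geq -n}M^\bullet[b].
\]
The final observation is that the canonical truncation $\tau^{\geq -n}$ changes nothing here: since $P^\bullet[b]$ and $M^\bullet[b]$ both vanish in degrees $< -n$, their $(-n)$-th components are unchanged by quotienting out the image of a zero differential. Hence the null-homotopy obtained from the lemma is already a null-homotopy of $\pi^{n+1}\cdot f$ itself, i.e.\ $\pi^{b-a+1}\cdot f = 0$ in $\mbf{K}(\scr{A})$.

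There is no real obstacle; the only point requiring care is the compatibility between the canonical truncation appearing in the conclusion of Lemma \ref{lem:pi-exact-homotopy} and the fact that my complexes are already sharply bounded below, but this is immediate from the description $\tau^{\geq -n}X^\bullet = (X^{-n}/\im(\df^{-n-1}) \to X^{1-n} \to \cdots)$ together with $X^{-n-1}=0$.
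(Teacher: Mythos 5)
Your proposal is correct and takes essentially the same approach as the paper, which simply states that the proposition follows directly from Lemma \ref{lem:pi-exact-homotopy}; you have filled in the routine shift-by-$[b]$ and the observation that the canonical truncation $\tau^{\geq -n}$ is the identity on complexes already vanishing below degree $-n$, both of which the paper leaves implicit.
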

\begin{proof}
	It follows directly from \ref{lem:pi-exact-homotopy}.
\end{proof}

\begin{mycor}\label{cor:pi-exact-homotopy}
	Let $P^\bullet$ be a complex of projective objects in $\scr{A}$, and let $f:M^\bullet\to N^\bullet$ be a $\pi$-quasi-isomorphism of complexes in $\scr{A}$. Assume that there are integers $a\leq b$ such that $P^i$, $M^i$ and $N^i$ vanish for any $i\notin[a,b]$. Then, the map $\ho_{\mbf{K}(\scr{A})}(P^\bullet,M^\bullet)\to \ho_{\mbf{K}(\scr{A})}(P^\bullet,N^\bullet)$ induced by $f$ is a $\pi^{2(b-a+3)}$-isomorphism of $R$-modules.
\end{mycor}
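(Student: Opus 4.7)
The plan is to reduce to Proposition \ref{prop:pi-exact-homotopy} applied to the mapping cone of $f$. Let $C^\bullet=\mrm{cone}(f)$ in $\mbf{K}(\scr{A})$; concretely $C^i=M^{i+1}\oplus N^i$, so $C^\bullet$ is concentrated in degrees $[a-1,b]$. The long exact cohomology sequence of the distinguished triangle $M^\bullet\to N^\bullet\to C^\bullet\to M^\bullet[1]$ produces, for each $i$, a short exact sequence
\[
0\longrightarrow \cok(H^i(f))\longrightarrow H^i(C^\bullet)\longrightarrow \ke(H^{i+1}(f))\longrightarrow 0.
\]
Because $f$ is a $\pi$-quasi-isomorphism the two outer terms are $\pi$-null, whence $H^i(C^\bullet)$ is $\pi^2$-null for every $i$; in other words, $C^\bullet$ is $\pi^2$-exact.

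Next I would apply Proposition \ref{prop:pi-exact-homotopy} with $\pi^2$ in place of $\pi$ — neither the statement nor the proof of \ref{prop:pi-exact-homotopy} (nor of \ref{lem:pi-exact-homotopy}) uses any special property of the chosen element of $R$. The complex $C^\bullet$ lives in $[a-1,b]$ and the projective complex $P^\bullet$ in $[a,b]\subset[a-1,b]$, an interval of length $b-a+2$, so $\ho_{\mbf{K}(\scr{A})}(P^\bullet,C^\bullet)$ is $\pi^{2(b-a+2)}$-null. The shifted cone $C^\bullet[-1]$ lies in $[a,b+1]$, again an interval of length $b-a+2$ containing the support of $P^\bullet$, so $\ho_{\mbf{K}(\scr{A})}(P^\bullet,C^\bullet[-1])$ is likewise $\pi^{2(b-a+2)}$-null.

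Finally, since $\mbf{K}(\scr{A})$ is triangulated with distinguished triangles given by mapping cones, applying the cohomological functor $\ho_{\mbf{K}(\scr{A})}(P^\bullet,-)$ to $M^\bullet\to N^\bullet\to C^\bullet\to M^\bullet[1]$ yields an exact sequence
\[
\ho_{\mbf{K}(\scr{A})}(P^\bullet,C^\bullet[-1])\longrightarrow \ho_{\mbf{K}(\scr{A})}(P^\bullet,M^\bullet)\stackrel{f_*}{\longrightarrow} \ho_{\mbf{K}(\scr{A})}(P^\bullet,N^\bullet)\longrightarrow \ho_{\mbf{K}(\scr{A})}(P^\bullet,C^\bullet).
\]
The kernel of $f_*$ is a quotient of the leftmost term and the cokernel of $f_*$ injects into the rightmost term, so both are annihilated by $\pi^{2(b-a+2)}$. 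Hence $f_*$ is a $\pi^{2(b-a+2)}$-isomorphism, a fortiori a $\pi^{2(b-a+3)}$-isomorphism. The only real subtlety is the bookkeeping of degree ranges for $C^\bullet$ and $C^\bullet[-1]$ together with the passage $\pi\leadsto\pi^2$ when invoking \ref{prop:pi-exact-homotopy}; I do not anticipate any serious obstacle beyond that.
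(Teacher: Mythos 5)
Your proof is correct and follows essentially the same route as the paper's: form the cone $C^\bullet$ of $f$, observe it is $\pi^2$-exact, and apply Proposition \ref{prop:pi-exact-homotopy} to the outer two terms of the exact sequence obtained by applying $\ho_{\mbf{K}(\scr{A})}(P^\bullet,-)$ to the cone triangle. Your degree bookkeeping is in fact slightly sharper than the paper's: you note that $C^\bullet$ lives in $[a-1,b]$ and $C^\bullet[-1]$ in $[a,b+1]$, each an interval of length $b-a+2$ together with the support $[a,b]$ of $P^\bullet$, giving $\pi^{2(b-a+2)}$-nullity, whereas the paper simply encloses both in $[a-1,b+1]$ and obtains the weaker bound $\pi^{2(b-a+3)}$ used in the statement; your conclusion ``a fortiori a $\pi^{2(b-a+3)}$-isomorphism'' correctly recovers the statement.
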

\begin{proof}
	There is an exact sequence of $R$-modules
	\begin{align}\label{eq:cor:pi-exact-homotopy}
		\ho_{\mbf{K}(\scr{A})}(P^\bullet,C^\bullet[-1])\to\ho_{\mbf{K}(\scr{A})}(P^\bullet,M^\bullet)\to\ho_{\mbf{K}(\scr{A})}(P^\bullet,N^\bullet)\to\ho_{\mbf{K}(\scr{A})}(P^\bullet,C^\bullet)
	\end{align}
	where $C^\bullet$ is the cone of $f$ (\cite[\href{https://stacks.math.columbia.edu/tag/0149}{0149}]{stacks-project}). As $C^\bullet$ and $C^\bullet[-1]$ are $\pi^2$-exact and vanish outside $[a-1,b+1]$, the outer two $R$-modules are $\pi^{2(b-a+3)}$-null by \ref{prop:pi-exact-homotopy}, whence we draw the conclusion.
\end{proof}

\begin{mylem}\label{lem:pi-exact-homotopy-lift}
	Let $g:P^\bullet\to N^\bullet$ and $f:M^\bullet\to N^\bullet$ be morphisms of complexes in $\scr{A}$. Assume that there are integers $a\leq b$ such that
	\begin{enumerate}
		\renewcommand{\labelenumi}{{\rm(\theenumi)}}
		\item $M^i=N^i=0$ for any $i>b$, and that\label{item:lem:pi-exact-homotopy-lift-1}
		\item $P^i$ is projective for any $i\in [a,b]$ and zero for any $i\notin [a,b]$, and that\label{item:lem:pi-exact-homotopy-lift-2}
		\item the map $H^i(f):H^i(M^\bullet)\to H^i(N^\bullet)$ is a $\pi$-isomorphism for $i>a$ and $\pi$-surjective for $i=a$.\label{item:lem:pi-exact-homotopy-lift-3}
	\end{enumerate}
	Then, $\pi^{2(b-a+1)}\cdot g$ lies in the image of the map $\ho_{\mbf{K}(\scr{A})}(P^\bullet,M^\bullet)\to \ho_{\mbf{K}(\scr{A})}(P^\bullet,N^\bullet)$ induced by $f$.
\end{mylem}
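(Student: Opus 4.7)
The plan is to reduce the statement to a null-homotopy question via the mapping cone, and then apply the inductive construction in the proof of Lemma \ref{lem:pi-exact-homotopy} with $\pi$ replaced by $\pi^2$. Let $C^\bullet$ denote the cone of $f: M^\bullet \to N^\bullet$. The distinguished triangle $M^\bullet \to N^\bullet \to C^\bullet$ yields an exact sequence of $R$-modules
\begin{align*}
\ho_{\mbf{K}(\scr{A})}(P^\bullet, M^\bullet) \xrightarrow{f_*} \ho_{\mbf{K}(\scr{A})}(P^\bullet, N^\bullet) \to \ho_{\mbf{K}(\scr{A})}(P^\bullet, C^\bullet),
\end{align*}
so it suffices to show that the image of $\pi^{2(b-a+1)} \cdot g$ in $\ho_{\mbf{K}(\scr{A})}(P^\bullet, C^\bullet)$ vanishes. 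Writing $\phi: P^\bullet \xrightarrow{g} N^\bullet \to C^\bullet$ for the composite, the goal becomes to exhibit an explicit null-homotopy of $\pi^{2(b-a+1)} \cdot \phi$.

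First I would extract the properties of $C^\bullet$ needed. Since $C^i = M^{i+1} \oplus N^i$, hypothesis \ref{item:lem:pi-exact-homotopy-lift-1} gives $C^i = 0$ for $i > b$. Combining the long exact cohomology sequence of the triangle with hypothesis \ref{item:lem:pi-exact-homotopy-lift-3}, both $\cok H^i(f)$ and $\ke H^{i+1}(f)$ are $\pi$-null for every $i \geq a$; hence from the short exact sequence $0 \to \cok H^i(f) \to H^i(C^\bullet) \to \ke H^{i+1}(f) \to 0$ one obtains that $H^i(C^\bullet)$ is $\pi^2$-null for all $i \geq a$.

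The main step is then to rerun the inductive construction in the proof of Lemma \ref{lem:pi-exact-homotopy} on $\phi: P^\bullet \to C^\bullet$, with $\pi$ replaced by $\pi^2$ and with degrees shifted so that the support of $P^\bullet$ becomes $[-n, 0]$ where $n = b - a$. Starting from $s^i = 0$ for $i > b$, I build $s^i: P^i \to C^{i-1}$ for $i$ descending from $b$ to $a$: at each step, the diagram chase of Lemma \ref{lem:pi-exact-homotopy} (using projectivity of $P^i$ and $\pi^2$-nullity of $H^i(C^\bullet)$) produces the required lift. Repackaging the resulting homotopies as in equation \eqref{eq:3.3.4} furnishes an honest null-homotopy of $(\pi^2)^{n+1} \cdot \phi = \pi^{2(b-a+1)} \cdot \phi$.

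The main point I would want to check carefully is that the null-homotopy lives on $C^\bullet$ itself rather than only on its canonical truncation $\tau^{\geq a} C^\bullet$, which is what one gets by naively invoking Lemma \ref{lem:pi-exact-homotopy} as a black box. This is not an issue here: the induction terminates at $i = a$ with a genuine map $s^a: P^a \to C^{a-1}$ (landing in an honest term of $C^\bullet$, not in a quotient), and the homotopy identity at degree $i = a-1$ is automatically satisfied because $P^{a-1} = 0$. Thus the construction runs on the full complex and the conclusion follows.
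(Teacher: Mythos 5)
Your proof is correct and follows essentially the same strategy as the paper: reduce to a vanishing statement in $\ho_{\mbf{K}(\scr{A})}(P^\bullet,C^\bullet)$ via the cone triangle, then exploit that $H^i(C^\bullet)$ is $\pi^2$-null for $i\geq a$ together with the boundedness of $P^\bullet$.

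The one genuine difference is in how the second step is packaged. The paper applies $\ho_{\dd(\scr{A})}(P^\bullet,-)$ to the truncation triangle $\tau^{\leq a-1}C^\bullet\to C^\bullet\to\tau^{\geq a}C^\bullet$, observes that the outgoing term $\ho_{\mbf{K}(\scr{A})}(P^\bullet,\tau^{\leq a-1}C^\bullet)$ vanishes because of the degree ranges, and then quotes Proposition \ref{prop:pi-exact-homotopy} on $\tau^{\geq a}C^\bullet$. You instead rerun the inductive null-homotopy construction of Lemma \ref{lem:pi-exact-homotopy} (with $\pi$ replaced by $\pi^2$) directly on $\phi\colon P^\bullet\to C^\bullet$, and take care of the low-degree boundary by hand. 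Your argument that the truncation is harmless is exactly right: since $P^{a-1}=0$, the identity at degree $a-1$ holds trivially, and the rescaled homotopy $t^i=\pi^{2(i-a)}s^i$ extends by zero to a genuine chain homotopy on the full complexes. This is the same boundary phenomenon that the vanishing of $\ho_{\mbf{K}(\scr{A})}(P^\bullet,\tau^{\leq a-1}C^\bullet)$ captures in the paper. Your version avoids the passage through $\dd(\scr{A})$ and the truncation triangle at the cost of redoing the homotopy bookkeeping, whereas the paper's version reuses Proposition \ref{prop:pi-exact-homotopy} as a black box. Both are sound and give the same bound $\pi^{2(b-a+1)}$.
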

\begin{proof}
	Let $C^\bullet$ be the cone of $f$, and let $\iota:N^\bullet\to C^\bullet$ be the canonical morphism. Applying the homological functor $\ho_{\dd(\scr{A})}(P^\bullet,-)$ to the distinguished triangle $\tau^{\leq a-1}C^\bullet\to C^\bullet\to\tau^{\geq a}C^\bullet\to (\tau^{\leq a-1}C^\bullet)[1]$ in the derived category $\dd(\scr{A})$, we obtain an exact sequence of $R$-modules (\cite[\href{https://stacks.math.columbia.edu/tag/0149}{0149}, \href{https://stacks.math.columbia.edu/tag/064B}{064B}]{stacks-project})
	\begin{align}
		\ho_{\mbf{K}(\scr{A})}(P^\bullet,\tau^{\leq a-1}C^\bullet)\to\ho_{\mbf{K}(\scr{A})}(P^\bullet,C^\bullet)\to\ho_{\mbf{K}(\scr{A})}(P^\bullet,\tau^{\geq a}C^\bullet).
	\end{align}
	The first term is zero by assumption (\ref{item:lem:pi-exact-homotopy-lift-2}), and $\tau^{\geq a}C^\bullet$ is $\pi^2$-exact by assumption (\ref{item:lem:pi-exact-homotopy-lift-3}). As $\tau^{\geq a}C^\bullet$ vanishes outside $[a,b]$ by assumption (\ref{item:lem:pi-exact-homotopy-lift-1}), the third term is $\pi^{2(b-a+1)}$-null by \ref{prop:pi-exact-homotopy}. We see that $\pi^{2(b-a+1)}\cdot(\iota\circ g)$ is zero in $\ho_{\mbf{K}(\scr{A})}(P^\bullet,C^\bullet)$. Therefore, the conclusion follows from the exact sequence (\cite[\href{https://stacks.math.columbia.edu/tag/0149}{0149}]{stacks-project})
	\begin{align}
		\ho_{\mbf{K}(\scr{A})}(P^\bullet,M^\bullet)\to\ho_{\mbf{K}(\scr{A})}(P^\bullet,N^\bullet)\to\ho_{\mbf{K}(\scr{A})}(P^\bullet,C^\bullet).
	\end{align}
\end{proof}

\section{Pseudo-coherence up to Bounded Torsion}\label{sec:ps-coh}
In this section, we fix integers $a\leq b$, a ring $R$ and an element $\pi$ of $R$. We remark that the universal bound $l$ that shall appear in each statement of this section depends only on the difference $b-a$ but not on $R$ or $\pi$.

\begin{mydefn}\label{defn:pi-pseudo-coh}
	Let $M^\bullet$ be a complex of $R$-modules.
	\begin{enumerate}
		\renewcommand{\labelenumi}{{\rm(\theenumi)}}
		\item A \emph{$\pi$-$[a,b]$-pseudo resolution} of $M^\bullet$ is a morphism $f:P^\bullet\to M^\bullet$ of complexes of $R$-modules, where $P^\bullet$ is a complex of finite free $R$-modules such that $P^i=0$ for any $i\notin [a,b]$, and where the map of cohomology groups $H^i(f):H^i(P^\bullet)\to H^i(M^\bullet)$ is a $\pi$-isomorphism for $i>a$ and $\pi$-surjective for $i=a$.
		\item We say that $M^\bullet$ is \emph{$\pi$-$[a,b]$-pseudo-coherent} if $M^i=0$ for any $i>b$ and if it admits a $\pi$-$[a,b]$-pseudo resolution. We say that an $R$-module $M$ is \emph{$\pi$-$[a,b]$-pseudo-coherent} if the complex $M[0]$ is $\pi$-$[a,b]$-pseudo-coherent.
	\end{enumerate}
\end{mydefn}

We follow the presentation of \cite[\href{https://stacks.math.columbia.edu/tag/064N}{064N}]{stacks-project} to establish some basic properties of this notion. The author does not know whether this notion is Zariski local on $R$ or not (cf. \cite[\href{https://stacks.math.columbia.edu/tag/066D}{066D}]{stacks-project}). This ad hoc notion only serves for the proof of our main theorem.

\begin{mylem}\label{lem:pi-pseudo-coh-translation}
	For any integers $a'\geq a$ and $b'\geq b$ with $a'\leq b'$, a $\pi$-$[a,b]$-pseudo-coherent complex of $R$-modules is also $\pi$-$[a',b']$-pseudo-coherent. 
\end{mylem}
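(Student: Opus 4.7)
The plan is to produce a $\pi$-$[a',b']$-pseudo resolution of $M^\bullet$ from a given $\pi$-$[a,b]$-pseudo resolution $f\colon P^\bullet\to M^\bullet$. The vanishing condition $M^i=0$ for $i>b'$ is automatic from $M^i=0$ for $i>b$ since $b\leq b'$, so the only task is to build the resolution.

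The key observation is that the stupid truncation $P'^\bullet:=\sigma^{\geq a'}P^\bullet$ does the job. Its components equal $P^i$ for $i\geq a'$ and vanish otherwise, so $P'^\bullet$ is a complex of finite free $R$-modules supported in $[a',b]\subseteq[a',b']$. Composing the canonical inclusion $\sigma^{\geq a'}P^\bullet\hookrightarrow P^\bullet$ with $f$ yields a morphism $g\colon P'^\bullet\to M^\bullet$. For $i>a'$, the map $H^i(g)$ coincides with $H^i(f)$, which is a $\pi$-isomorphism since $a'\geq a$. For $i=a'$, the map $H^{a'}(g)$ factors as the canonical surjection $H^{a'}(P'^\bullet)=\ke(\df^{a'})\twoheadrightarrow H^{a'}(P^\bullet)$ followed by $H^{a'}(f)$, which is $\pi$-surjective by hypothesis (even a $\pi$-isomorphism when $a'>a$); hence the composition is $\pi$-surjective.

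In the degenerate case $a'>b$ one has $P'^\bullet=0$ and $M^i=0$ for all $i\geq a'$, so $H^i(M^\bullet)=0$ in that range and the zero morphism trivially serves as a $\pi$-$[a',b']$-pseudo resolution. I do not anticipate any technical obstacle here; this lemma is pure bookkeeping, and its role is to let one freely enlarge the pseudo-coherence range before invoking the heavier results of the section.
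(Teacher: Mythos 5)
Your proof is correct and uses essentially the same approach as the paper: both hinge on taking the stupid truncation $\sigma^{\geq a'}P^\bullet$ and noting that it still maps onto the cohomology in the relevant range. The paper merely separates the widening $b\mapsto b'$ (trivial) from the narrowing $a\mapsto a'$ (stupid truncation), whereas you handle both at once, which is fine.
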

\begin{proof}
	We only need to treat the case $a=a'$ and the case $b=b'$ separately. If $a=a'$, then it is clear that $M^i=0$ for any $i>b'$ and a $\pi$-$[a,b]$-pseudo resolution of $M^\bullet$ is also a $\pi$-$[a,b']$-pseudo resolution. If $b=b'$, then a $\pi$-$[a,b]$-pseudo resolution $P^\bullet\to M^\bullet$ induces a $\pi$-$[a',b]$-pseudo resolution $\sigma^{\geq a'}P^\bullet\to M^\bullet$.
\end{proof}

\begin{mylem}\label{lem:pi-pseudo-coh-iso}
	Let $M^\bullet$ and $N^\bullet$ be complexes of $R$-modules vanishing in degrees $>b$, and let $\alpha:M^\bullet\to N^\bullet$ be a morphism inducing a $\pi$-isomorphism on cohomology groups $H^i(\alpha):H^i(M^\bullet)\to H^i(N^\bullet)$ for any $i\geq a$.
	\begin{enumerate}
		\renewcommand{\labelenumi}{{\rm(\theenumi)}}
		\item If $M^\bullet$ is $\pi$-$[a,b]$-pseudo-coherent, then $N^\bullet$ is $\pi^2$-$[a,b]$-pseudo-coherent.\label{item:lem:pi-pseudo-coh-iso-1}
		\item If $N^\bullet$ is $\pi$-$[a,b]$-pseudo-coherent, then $M^\bullet$ is $\pi^l$-$[a,b]$-pseudo-coherent for an integer $l\geq 0$ depending only on $b-a$.\label{item:lem:pi-pseudo-coh-iso-2}
	\end{enumerate}
\end{mylem}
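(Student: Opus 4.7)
The plan is to address (1) directly by post-composition, and (2) by invoking Lemma \ref{lem:pi-exact-homotopy-lift} to lift a pseudo-resolution along $\alpha$, then inspecting cohomology.

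For part (1), take a $\pi$-$[a,b]$-pseudo resolution $\beta : P^\bullet \to M^\bullet$ and form the composite $\alpha \circ \beta : P^\bullet \to N^\bullet$. Since $P^\bullet$ is already a complex of finite free $R$-modules concentrated in $[a,b]$, and since $N^i = 0$ for $i > b$ by assumption, it remains only to check the cohomology condition. For $i > a$, the maps $H^i(\beta)$ and $H^i(\alpha)$ are both $\pi$-isomorphisms; a short diagram chase in $\ke$ (killed by $\pi \cdot \pi = \pi^2$) and in $\cok$ (use $\pi$-surjectivity of each map successively) shows that $H^i(\alpha \circ \beta)$ is a $\pi^2$-isomorphism. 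For $i = a$, the same cokernel chase using $\pi$-surjectivity of $H^a(\beta)$ and $\pi$-surjectivity of $H^a(\alpha)$ gives $\pi^2$-surjectivity of $H^a(\alpha \circ \beta)$. So $\alpha \circ \beta$ is a $\pi^2$-$[a,b]$-pseudo resolution of $N^\bullet$.

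For part (2), fix a $\pi$-$[a,b]$-pseudo resolution $g : Q^\bullet \to N^\bullet$. I want to lift $g$ through $\alpha$ up to a controlled power of $\pi$. All hypotheses of Lemma \ref{lem:pi-exact-homotopy-lift} applied to $f = \alpha$, with projective complex $Q^\bullet$, are satisfied: $M^i = N^i = 0$ for $i > b$; $Q^i$ is finite free for $i \in [a,b]$ and zero otherwise; and $H^i(\alpha)$ is a $\pi$-isomorphism for $i \geq a$, in particular $\pi$-surjective at $i = a$ and a $\pi$-isomorphism for $i > a$. The lemma then produces a morphism $h : Q^\bullet \to M^\bullet$ in $\mbf{K}(\scr{A})$ such that $\alpha \circ h$ and $\pi^{2(b-a+1)} \cdot g$ agree up to homotopy, hence $H^i(\alpha) \circ H^i(h) = \pi^{2(b-a+1)} \cdot H^i(g)$ on cohomology for every $i$.

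The remaining step is to verify that $h$ is itself a $\pi^l$-$[a,b]$-pseudo resolution of $M^\bullet$ for some $l$ depending only on $b-a$; this is a straightforward diagram chase on cohomology, which I expect to be the only slightly delicate computation. For $i > a$: if $y \in \ke(H^i(h))$, then $\pi^{2(b-a+1)} H^i(g)(y) = 0$, so $y \in \pi^{2(b-a+1)}$-torsion composed with $\ke(H^i(g))$, giving $\pi^{2(b-a+1)+1} y = 0$ since $\ke(H^i(g))$ is $\pi$-null. For $\cok(H^i(h))$ and $i \geq a$: given $y \in H^i(M^\bullet)$, $\pi$-surjectivity of $H^i(g)$ produces $w \in H^i(Q^\bullet)$ with $H^i(g)(w) = \pi \cdot H^i(\alpha)(y)$, whence $H^i(\alpha)(H^i(h)(w) - \pi^{2(b-a+1)+1} y) = 0$, and $\pi$-injectivity of $H^i(\alpha)$ gives $\pi^{2(b-a+1)+2} y \in \im(H^i(h))$. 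Combining these bounds, $h$ is a $\pi^l$-$[a,b]$-pseudo resolution for $l = 2(b-a+1)+2$, which depends only on $b-a$, as required.
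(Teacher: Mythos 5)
Your proof is correct and takes essentially the same route as the paper: part (1) by post-composing a pseudo-resolution of $M^\bullet$ with $\alpha$, and part (2) by applying Lemma~\ref{lem:pi-exact-homotopy-lift} to lift a pseudo-resolution of $N^\bullet$ through $\alpha$ up to $\pi^{2(b-a+1)}$, then running the diagram chase on cohomology. Your final exponent $l=2(b-a+1)+2$ matches the paper's bound, so there is nothing to add.
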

\begin{proof}
	(\ref{item:lem:pi-pseudo-coh-iso-1}) We take a $\pi$-$[a,b]$-pseudo resolution $f:P^\bullet\to M^\bullet$. In particular, $H^i(f)$ is a $\pi$-isomorphism for any $i>a$ and $\pi$-surjective for $i=a$. Hence, $H^i(\alpha\circ f)=H^i(\alpha)\circ H^i(f)$ is $\pi^2$-isomorphism for any $i>a$ and $\pi^2$-surjective for $i=a$, which shows that $\alpha\circ f:P^\bullet\to N^\bullet$ is a $\pi^2$-$[a,b]$-pseudo resolution.
	
	(\ref{item:lem:pi-pseudo-coh-iso-2}) Let $g:P^\bullet\to N^\bullet$ be a $\pi$-$[a,b]$-pseudo resolution. We obtain from \ref{lem:pi-exact-homotopy-lift} a morphism $f:P^\bullet\to M^\bullet$ lifting $\pi^l\cdot g$ up to homotopy for $l=2(b-a+1)$. Thus, for any $i\in\bb{Z}$, we have
	\begin{align}
		H^i(\pi^l\cdot g)=H^i(\alpha)\circ H^i(f).
	\end{align}
	Notice that $H^i(\pi^l\cdot g)$ is a $\pi^{l+1}$-isomorphism for $i>a$ and $\pi^{l+1}$-surjective for $i=a$, and that $H^i(\alpha)$ is a $\pi$-isomorphism for $i\geq a$. We see that $H^i(f)$ is a $\pi^{l+2}$-isomorphism for $i>a$ and $\pi^{l+2}$-surjective for $i=a$. Thus, $f:P^\bullet\to M^\bullet$ is a $\pi^{l+2}$-$[a,b]$-pseudo resolution.
\end{proof}

\begin{myprop}\label{prop:pi-pseudo-coh-derived}
	Let $M^\bullet$ and $N^\bullet$ be two complexes of $R$-modules vanishing in degree $>b$. Assume that they are isomorphic in the derived category $\dd(R)$. Then, if $M^\bullet$ is $\pi$-$[a,b]$-pseudo-coherent, then $N^\bullet$ is $\pi^l$-$[a,b]$-pseudo-coherent for an integer $l\geq 0$ depending only on $b-a$.
\end{myprop}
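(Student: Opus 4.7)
The plan is to transport a $\pi$-pseudo resolution of $M^\bullet$ along the derived isomorphism directly to one for $N^\bullet$, exploiting the fact that a bounded complex of finite free modules is $K$-projective.

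First I would fix an isomorphism $\phi\colon M^\bullet \iso N^\bullet$ in $\dd(R)$ and a $\pi$-$[a,b]$-pseudo resolution $f\colon P^\bullet \to M^\bullet$. By definition $P^\bullet$ is a bounded complex of finite free $R$-modules concentrated in degrees $[a,b]$, and hence $K$-projective, so the natural map
\[
\ho_{\mbf{K}(R)}(P^\bullet, N^\bullet) \longrightarrow \ho_{\dd(R)}(P^\bullet, N^\bullet)
\]
is bijective. Consequently, the derived-category composite $\phi \circ f$ is represented by a morphism of complexes $g\colon P^\bullet \to N^\bullet$, well-defined up to homotopy. On cohomology I then have $H^i(g)=H^i(\phi)\circ H^i(f)$; since each $H^i(\phi)$ is a genuine isomorphism and each $H^i(f)$ is a $\pi$-isomorphism for $i>a$ and $\pi$-surjective for $i=a$, the same holds for $H^i(g)$. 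Together with the hypothesis $N^i=0$ for $i>b$, this would show that $g$ is a $\pi$-$[a,b]$-pseudo resolution of $N^\bullet$, so $N^\bullet$ is already $\pi$-$[a,b]$-pseudo-coherent and in fact $l=1$ suffices.

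The only nontrivial ingredient is the standard fact that bounded complexes of projective modules are $K$-projective, which ensures that derived-category maps out of $P^\bullet$ lift uniquely (up to homotopy) to morphisms of complexes. I expect no real obstacle beyond this classical manipulation. Should one prefer to stay strictly within the ad hoc framework of Section \ref{sec:pi-isom} (which would naturally produce an exponent depending on $b-a$, matching the shape of the stated conclusion), one could instead represent $\phi$ by a left roof $M^\bullet \xleftarrow{t} Q^\bullet \xrightarrow{g'} N^\bullet$ of genuine quasi-isomorphisms; after replacing $Q^\bullet$ by $\tau^{\leq b}Q^\bullet$ (which leaves both legs quasi-isomorphisms, since $M^\bullet$ and $N^\bullet$ already vanish above $b$), one would apply Lemma \ref{lem:pi-pseudo-coh-iso}(\ref{item:lem:pi-pseudo-coh-iso-2}) to $t$ to transfer $\pi$-$[a,b]$-pseudo-coherence from $M^\bullet$ to $Q^\bullet$ (with some loss $\pi\mapsto\pi^{l'}$), and then Lemma \ref{lem:pi-pseudo-coh-iso}(\ref{item:lem:pi-pseudo-coh-iso-1}) (with $\pi$ replaced by $\pi^{l'}$) to $g'$ to push it forward to $N^\bullet$.
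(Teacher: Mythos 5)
Your main argument is correct and in fact sharper than the paper's. The paper picks a bounded above projective resolution $P^\bullet\to N^\bullet$, observes that the derived isomorphism gives a genuine quasi-isomorphism $P^\bullet\to M^\bullet$, and then applies Lemma~\ref{lem:pi-pseudo-coh-iso} twice (part~(\ref{item:lem:pi-pseudo-coh-iso-2}) to $P^\bullet\to M^\bullet$, then part~(\ref{item:lem:pi-pseudo-coh-iso-1}) to $P^\bullet\to N^\bullet$), each application costing a power of $\pi$ depending on $b-a$. Your main route instead transports the given $\pi$-$[a,b]$-pseudo resolution $f\colon P^\bullet\to M^\bullet$ directly: $P^\bullet$ is a bounded complex of finite projectives, hence $K$-projective, so the derived map $\phi\circ f$ is realized by an honest chain map $g\colon P^\bullet\to N^\bullet$ with $H^i(g)=H^i(\phi)\circ H^i(f)$, and since $H^i(\phi)$ is an actual isomorphism, $g$ is already a $\pi$-$[a,b]$-pseudo resolution of $N^\bullet$. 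This buys $l=1$ uniformly, strictly improving the stated conclusion, at the modest cost of invoking the (standard) $K$-projectivity of bounded complexes of projectives rather than staying within the self-contained toolbox of Section~\ref{sec:pi-isom}. Your alternative second paragraph (representing $\phi$ by a left roof, truncating, and applying Lemma~\ref{lem:pi-pseudo-coh-iso} twice) is essentially the paper's argument with a different choice of intermediate complex, and recovers the weaker bound; it is also correct. One small point worth stating explicitly when composing the roof legs with $\tau^{\leq b}$: since $M^\bullet$ vanishes above $b$, the cohomology of $Q^\bullet$ vanishes above $b$, so $\tau^{\leq b}Q^\bullet\hookrightarrow Q^\bullet$ is a quasi-isomorphism and both composed legs remain quasi-isomorphisms, landing in complexes that vanish above $b$ as Lemma~\ref{lem:pi-pseudo-coh-iso} requires.
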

\begin{proof}
	Let $P^\bullet\to N^\bullet$ be a bounded above projective resolution with the same top degree. The assumption implies that there is a quasi-isomorphism of complexes $P^\bullet\to M^\bullet$ (\cite[\href{https://stacks.math.columbia.edu/tag/064B}{064B}]{stacks-project}). The conclusion follows from applying \ref{lem:pi-pseudo-coh-iso} to $P^\bullet\to M^\bullet$ and $P^\bullet\to N^\bullet$.
\end{proof}

\begin{mylem}\label{lem:pi-pseudo-coh-lift}
	Let $\alpha:M_1^\bullet\to M_2^\bullet$ be a morphism of $\pi$-$[a,b]$-pseudo-coherent complexes of $R$-modules. Given $\pi$-$[a,b]$-pseudo resolutions $f_i:P_i^\bullet\to M_i^\bullet$ ($i=1,2$), there exists a morphism of complexes $\alpha':P_1^\bullet\to P_2^\bullet$ such that $(\pi^l\cdot \alpha)\circ f_1$ is homotopic to $f_2\circ \alpha'$ for an integer $l\geq 0$ depending only on $b-a$.
\end{mylem}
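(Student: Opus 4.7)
The plan is to apply Lemma \ref{lem:pi-exact-homotopy-lift} directly, taking the complex $P^\bullet$ of that lemma to be $P_1^\bullet$, the morphism $f$ to be $f_2\colon P_2^\bullet\to M_2^\bullet$, and the morphism $g$ to be $\alpha\circ f_1\colon P_1^\bullet\to M_2^\bullet$. I would check the three hypotheses of \ref{lem:pi-exact-homotopy-lift} in turn: condition (1) holds because $M_2^\bullet$ is $\pi$-$[a,b]$-pseudo-coherent (so vanishes above degree $b$) and $P_2^\bullet$ likewise vanishes outside $[a,b]$ by the definition of a pseudo resolution; condition (2) holds because $P_1^i$ is finite free, hence projective, and concentrated in $[a,b]$; and condition (3) is exactly the cohomological content of $f_2$ being a $\pi$-$[a,b]$-pseudo resolution.

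The lemma then furnishes an element of $\ho_{\mbf{K}(\scr{A})}(P_1^\bullet,P_2^\bullet)$ whose image in $\ho_{\mbf{K}(\scr{A})}(P_1^\bullet,M_2^\bullet)$ under composition with $f_2$ equals $\pi^{2(b-a+1)}\cdot(\alpha\circ f_1)$. Choosing a representative $\alpha'\colon P_1^\bullet\to P_2^\bullet$ of this homotopy class yields the required morphism, with the explicit bound $l=2(b-a+1)$, which depends only on $b-a$.

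There is no real obstacle here: the statement is essentially a reformulation of the lifting lemma \ref{lem:pi-exact-homotopy-lift} in the presence of a pseudo resolution on the target. The only thing to verify carefully is that the cohomological hypothesis (3) of \ref{lem:pi-exact-homotopy-lift} matches precisely the definition of a $\pi$-$[a,b]$-pseudo resolution given in \ref{defn:pi-pseudo-coh}, and that the $\pi$-$[a,b]$-pseudo-coherence of $M_2^\bullet$ gives the vanishing in degrees $>b$ needed for condition (1); note that the $\pi$-$[a,b]$-pseudo-coherence of $M_1^\bullet$ is used only tacitly, to make sense of $f_1$ being a $\pi$-$[a,b]$-pseudo resolution of it.
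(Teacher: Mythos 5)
Your proof is correct and is essentially the paper's own argument: the paper's proof reads in full ``It follows directly from \ref{lem:pi-exact-homotopy-lift},'' and your instantiation $P^\bullet=P_1^\bullet$, $M^\bullet=P_2^\bullet$, $N^\bullet=M_2^\bullet$, $f=f_2$, $g=\alpha\circ f_1$, together with the verification of hypotheses (1)--(3), is precisely what that one-liner asks the reader to carry out. The bound $l=2(b-a+1)$ is the one the lemma produces, and it depends only on $b-a$, as required.
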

\begin{proof}
	It follows directly from \ref{lem:pi-exact-homotopy-lift}.
\end{proof}

\begin{mylem}\label{lem:pi-pseudo-coh-triangle}
	Let $M_1^\bullet\stackrel{\alpha}{\longrightarrow} M_2^\bullet\stackrel{\beta}{\longrightarrow} M_3^\bullet\stackrel{\gamma}{\longrightarrow}M_1^\bullet[1]$ be a distinguished triangle in the homotopy category $\mbf{K}(R)$. Assume that $M_1^\bullet$ is $\pi$-$[a+1,b+1]$-pseudo-coherent, $M_2^\bullet$ is $\pi$-$[a,b]$-pseudo-coherent, and $M_3^i=0$ for any $i>b$. Then, $M_3^\bullet$ is $\pi^l$-$[a,b]$-pseudo-coherent for an integer $l\geq 0$ depending only on $b-a$.
\end{mylem}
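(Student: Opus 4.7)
The plan is to adapt the classical argument that pseudo-coherent complexes are closed under cones (cf.\ \cite[\href{https://stacks.math.columbia.edu/tag/064T}{064T}]{stacks-project}) to the $\pi$-version, carefully tracking powers of $\pi$. I first choose $\pi$-$[a+1,b+1]$- and $\pi$-$[a,b]$-pseudo resolutions $f_1 : P_1^\bullet \to M_1^\bullet$ and $f_2 : P_2^\bullet \to M_2^\bullet$, and then invoke \ref{lem:pi-pseudo-coh-lift} to obtain a morphism of complexes $\alpha' : P_1^\bullet \to P_2^\bullet$ together with a homotopy witnessing $\pi^{l_0} \alpha \circ f_1 \sim f_2 \circ \alpha'$ in $\mbf{K}(R)$, where $l_0 \geq 0$ depends only on $b-a$. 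The cone $P^\bullet := C(\alpha')$ is then a complex of finite free modules with $P^i = P_1^{i+1} \oplus P_2^i$ vanishing for $i \notin [a,b]$, since $P_1^\bullet[1]$ and $P_2^\bullet$ are both supported in $[a,b]$.

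Next I would produce a morphism $\phi : P^\bullet \to M_3^\bullet$ in $\mbf{K}(R)$ fitting into a morphism of distinguished triangles
$$\xymatrix{
P_1^\bullet \ar[r]^-{\alpha'} \ar[d]_-{\pi^{l_0} f_1} & P_2^\bullet \ar[r] \ar[d]^-{f_2} & P^\bullet \ar[r]^-{k} \ar[d]^-{\phi} & P_1^\bullet[1] \ar[d]^-{\pi^{l_0} f_1[1]} \\
M_1^\bullet \ar[r]^-{\alpha} & M_2^\bullet \ar[r]^-{\beta} & M_3^\bullet \ar[r]^-{\gamma} & M_1^\bullet[1].
}$$
The existence of such $\phi$ follows from the triangulated-category axioms: the composite $\beta \circ f_2 \circ \alpha'$ is homotopic to $\beta \circ \pi^{l_0}\alpha \circ f_1 = \pi^{l_0}(\beta\circ\alpha)\circ f_1 = 0$ in $\mbf{K}(R)$, so the map $\beta\circ f_2 : P_2^\bullet \to M_3^\bullet$ extends to the cone $P^\bullet = C(\alpha')$.

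To conclude that $\phi$ is a $\pi^l$-$[a,b]$-pseudo resolution of $M_3^\bullet$ for some $l \geq 0$ depending only on $b-a$, it remains---since $P^\bullet$ is already finite free in $[a,b]$ and $M_3^i = 0$ for $i > b$---to verify that $H^i(\phi)$ is a $\pi^l$-isomorphism for $a < i \leq b$ and a $\pi^l$-surjection for $i = a$. This reduces to a $\pi$-version of the five-lemma applied to the morphism of cohomology long exact sequences induced by the above morphism of triangles: the hypotheses yield that $H^i(f_2)$ is a $\pi$-isomorphism for $i > a$ (resp.\ $\pi$-surjection for $i = a$) and that $H^i(f_1)$ is a $\pi$-isomorphism for $i > a+1$ (resp.\ $\pi$-surjection for $i = a+1$). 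The main obstacle is the endpoint $i = a$, where $H^a(P_1^\bullet) = 0$ makes $H^a(f_1) : 0 \to H^a(M_1^\bullet)$ uncontrolled and so prevents a direct application of a uniform five-lemma. Only $\pi^l$-surjectivity at $i = a$ is needed, however, and this can be obtained by a diagonal chase: given $y \in H^a(M_3^\bullet)$, one lifts $\gamma(y) \in H^{a+1}(M_1^\bullet)$ through the $\pi$-surjection $H^{a+1}(f_1)$ and then pulls back across $k$ using the $\pi$-isomorphism $H^{a+1}(f_2)$, introducing only $O(b-a)$ extra factors of $\pi$ as required.
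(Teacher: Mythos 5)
Your proposal is correct and follows essentially the same route as the paper: both choose pseudo-resolutions $f_1,f_2$, invoke Lemma \ref{lem:pi-pseudo-coh-lift} to lift $\pi^{l_0}\cdot\alpha$ to a chain map $\alpha':P_1^\bullet\to P_2^\bullet$, take $P^\bullet = C(\alpha')$ (which is finite free and supported in $[a,b]$), and complete a morphism of distinguished triangles to $M_3^\bullet$. The only difference is in how the final cohomology comparison is carried out: the paper forms the cones $C_1^\bullet,C_2^\bullet,C_3^\bullet$ of the three vertical maps, obtains a distinguished triangle $C_1^\bullet\to C_2^\bullet\to C_3^\bullet\to C_1^\bullet[1]$, and reads off the $\pi$-exactness of $\tau^{\geq a}C_3^\bullet$ directly from that long exact sequence; you instead run a $\pi$-perturbed five-lemma on the two cohomology long exact sequences and handle the endpoint $i=a$ by a separate surjectivity chase. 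These are equivalent packagings of the same argument; the paper's cone formulation is slightly tidier because it sidesteps the explicit diagram chase at the boundary degree, which you correctly identify as the only delicate point.
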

\begin{proof}
	We take a $\pi$-$[a+1,b+1]$-pseudo resolution $f_1:P_1^\bullet\to M_1^\bullet$ and a $\pi$-$[a,b]$-pseudo resolution $f_2:P_2^\bullet\to M_2^\bullet$. By \ref{lem:pi-pseudo-coh-lift}, there exists a morphism $\alpha':P_1^\bullet\to P_2^\bullet$ lifting $\pi^l\cdot \alpha$ in $\mbf{K}(R)$ for an integer $l\geq 0$ depending only on $b-a$. If we denote its cone by $P_3^\bullet$, then we have a morphism of distinguished triangles in $\mbf{K}(R)$.
	\begin{align}
		\xymatrix{
			P_1^\bullet\ar[r]^-{\alpha'}\ar[d]_-{\pi^l\cdot f_1}& P_2^\bullet\ar[r]^-{\beta'}\ar[d]_-{f_2}& P_3^\bullet\ar[r]^-{\gamma'}\ar[d]_-{f_3}&P_1^\bullet[1]\ar[d]^-{\pi^l\cdot f_1[1]}\\
			M_1^\bullet\ar[r]^-{\alpha}& M_2^\bullet\ar[r]^-{\beta}& M_3^\bullet\ar[r]^-{\gamma}&M_1^\bullet[1]
		}
	\end{align}
	Let $C_1^\bullet$, $C_2^\bullet$, $C_3^\bullet$ be the cones of $\pi^l\cdot f_1$, $f_2$, $f_3$ respectively. We obtain a distinguished triangle $C_1^\bullet\to C_2^\bullet\to C_3^\bullet\to C_1^\bullet[1]$ in  $\mbf{K}(R)$ (\cite[\href{https://stacks.math.columbia.edu/tag/05R0}{05R0}]{stacks-project}). By assumption, $\tau^{\geq (a+1)}C_1^\bullet$ is $\pi^{2(l+1)}$-exact and $\tau^{\geq a}C_2^\bullet$ is $\pi^2$-exact. Thus, we see that $\tau^{\geq a}C_3^\bullet$ is $\pi^{2(l+2)}$-exact. As $P_3^\bullet$ vanishes outside $[a,b]$, $P_3^\bullet\to M_3^\bullet$ is a $\pi^{2(l+2)}$-$[a,b]$-pseudo resolution.
\end{proof}

\begin{myprop}\label{prop:pi-pseudo-coh-ext}
	Let $0\longrightarrow M_1^\bullet\stackrel{\alpha}{\longrightarrow} M_2^\bullet\stackrel{\beta}{\longrightarrow} M_3^\bullet\longrightarrow 0$ be an exact sequence of complexes of $R$-modules.
	\begin{enumerate}
		\renewcommand{\labelenumi}{{\rm(\theenumi)}}
		\item Assume that $M_1^\bullet$ is $\pi$-$[a+1,b+1]$-pseudo-coherent and $M_2^\bullet$ is $\pi$-$[a,b]$-pseudo-coherent. Then, $M_3^\bullet$ is $\pi^l$-$[a,b]$-pseudo-coherent for an integer $l\geq 0$ depending only on $b-a$.\label{item:prop:pi-pseudo-coh-ext-1}
		\item Assume that $M_1^\bullet$ and $M_3^\bullet$ are $\pi$-$[a,b]$-pseudo-coherent. Then, $M_2^\bullet$ is $\pi^l$-$[a,b]$-pseudo-coherent for an integer $l\geq 0$ depending only on $b-a$.\label{item:prop:pi-pseudo-coh-ext-2}
		\item Assume that $M_2^\bullet$ is $\pi$-$[a-1,b-1]$-pseudo-coherent and $M_3^\bullet$ is $\pi$-$[a-2,b-1]$-pseudo-coherent. Then, $M_1^\bullet$ is $\pi^l$-$[a-1,b]$-pseudo-coherent for an integer $l\geq 0$ depending only on $b-a$.\label{item:prop:pi-pseudo-coh-ext-3}
	\end{enumerate}
\end{myprop}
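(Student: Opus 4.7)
The plan is to reduce each statement to the triangulated version already proved in Lemma \ref{lem:pi-pseudo-coh-triangle}, by replacing the quotient (or the kernel, or the middle term) of the short exact sequence by the mapping cone of the appropriate map. Throughout, I will freely use Lemma \ref{lem:pi-pseudo-coh-translation} to widen the pseudo-coherence interval and Proposition \ref{prop:pi-pseudo-coh-derived} to transport pseudo-coherence along quasi-isomorphisms in $\dd(R)$; both cost only a bounded power of $\pi$ depending on $b-a$.

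For (\ref{item:prop:pi-pseudo-coh-ext-1}), I form the mapping cone $C^\bullet$ of $\alpha$ in $\mbf{K}(R)$, so we have a distinguished triangle $M_1^\bullet\to M_2^\bullet\to C^\bullet\to M_1^\bullet[1]$ in $\mbf{K}(R)$ and a quasi-isomorphism $C^\bullet\to M_3^\bullet$ in $\dd(R)$. Since $C^i=M_2^i\oplus M_1^{i+1}$, the hypothesis that $M_1^\bullet$ vanishes in degrees $>b+1$ and $M_2^\bullet$ in degrees $>b$ forces $C^i=0$ for $i>b$. Lemma \ref{lem:pi-pseudo-coh-triangle} then applies and gives that $C^\bullet$ is $\pi^l$-$[a,b]$-pseudo-coherent. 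Because $M_3^i=0$ for $i>b$ (directly from the exactness), Proposition \ref{prop:pi-pseudo-coh-derived} transports this to $M_3^\bullet$.

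For (\ref{item:prop:pi-pseudo-coh-ext-2}), I again form the cone $C^\bullet$ of $\alpha$, which is quasi-isomorphic to $M_3^\bullet$ in $\dd(R)$; the same vanishing check as above shows $C^i=0$ for $i>b$, so Proposition \ref{prop:pi-pseudo-coh-derived} gives that $C^\bullet$ is $\pi^l$-$[a,b]$-pseudo-coherent. Shifting, $C^\bullet[-1]$ is $\pi^l$-$[a+1,b+1]$-pseudo-coherent. Now I rotate the distinguished triangle to $C^\bullet[-1]\to M_1^\bullet\to M_2^\bullet\to C^\bullet$ (still distinguished in $\mbf{K}(R)$) and apply Lemma \ref{lem:pi-pseudo-coh-triangle} to it, with $\pi$ replaced by $\pi^l$; the vanishing $M_2^i=0$ for $i>b$ follows from the exact sequence. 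This yields the desired pseudo-coherence of $M_2^\bullet$.

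For (\ref{item:prop:pi-pseudo-coh-ext-3}), the role of the cone is played by $\beta$. Let $C^\bullet$ be the cone of $\beta:M_2^\bullet\to M_3^\bullet$ in $\mbf{K}(R)$; it is quasi-isomorphic to $M_1^\bullet[1]$ in $\dd(R)$. I enlarge the interval for $M_2^\bullet$ so that it becomes $\pi$-$[a-1,b]$-pseudo-coherent, which together with the hypothesis on $M_3^\bullet$ and the vanishing $C^i=M_3^i\oplus M_2^{i+1}=0$ for $i>b-1$ puts me in position to invoke Lemma \ref{lem:pi-pseudo-coh-triangle}. This gives that $C^\bullet$ is $\pi^l$-$[a-2,b-1]$-pseudo-coherent. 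Shifting by $-1$, $M_1^\bullet[1][-1]\simeq M_1^\bullet$ is quasi-isomorphic in $\dd(R)$ to a $\pi^l$-$[a-1,b]$-pseudo-coherent complex; since $M_1^\bullet$ injects into $M_2^\bullet$ which vanishes in degrees $>b-1$, we have $M_1^i=0$ for $i>b$, and Proposition \ref{prop:pi-pseudo-coh-derived} completes the argument.

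The main bookkeeping difficulty is keeping track of the pseudo-coherence intervals after the cone and shift operations so that Lemma \ref{lem:pi-pseudo-coh-triangle} applies with exactly the right bounds; all the powers of $\pi$ that accumulate at each step are bounded in terms of $b-a$ alone, so the final universal bound $l$ is independent of $R$ and $\pi$.
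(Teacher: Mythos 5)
Your proof is correct and follows essentially the same route as the paper's: all three parts are reduced to Lemma \ref{lem:pi-pseudo-coh-triangle} via a cone construction, transporting pseudo-coherence across the quasi-isomorphism between the cone and the corresponding term of the short exact sequence. The only cosmetic difference is in part (3), where you take the cone of $\beta$ (quasi-isomorphic to $M_1^\bullet[1]$) rather than, as the paper does, rotating the triangle built on the cone of $\alpha$; both variants invoke the same triangle lemma and produce the same bound up to harmless powers of $\pi$.
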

\begin{proof}
	Let $C^\bullet$ be the cone of $\alpha:M_1^\bullet\to M_2^\bullet$. Then, the natural morphism $C^\bullet \to M_3^\bullet$ is a quasi-isomorphism. 
	
	(\ref{item:prop:pi-pseudo-coh-ext-1}) In this case, $M_3^i=0$ and $C^i=0$ for any $i>b$, and actually $C^\bullet$ is $\pi^l$-$[a,b]$-pseudo-coherent for an integer $l\geq 0$ depending only on $b-a$ by applying \ref{lem:pi-pseudo-coh-triangle} to the distinguished triangle $M_1^\bullet\to M_2^\bullet\to C^\bullet \to M_1^\bullet[1]$ in $\mbf{K}(R)$. Thus, $M_3^\bullet$ is $\pi^{2l}$-$[a,b]$-pseudo-coherent by \ref{lem:pi-pseudo-coh-iso}.(\ref{item:lem:pi-pseudo-coh-iso-1}). 
	
	(\ref{item:prop:pi-pseudo-coh-ext-2}) In this case, $M_2^i=0$ and $C^i=0$ for any $i>b$, and $C^\bullet$ is $\pi^l$-$[a,b]$-pseudo-coherent for an integer $l\geq 0$ depending only on $b-a$ by \ref{lem:pi-pseudo-coh-iso}.(\ref{item:lem:pi-pseudo-coh-iso-2}). Thus, $M_2^\bullet$ is $\pi^l$-$[a,b]$-pseudo-coherent for an integer $l\geq 0$ depending only on $b-a$ by applying \ref{lem:pi-pseudo-coh-triangle} to the distinguished triangle $C^\bullet[-1]\to M_1^\bullet\to M_2^\bullet\to C^\bullet$ in $\mbf{K}(R)$. 
	
	(\ref{item:prop:pi-pseudo-coh-ext-3}) In this case, $M_1^i=0$ and $C^i=0$ for any $i>b-1$, and $C^\bullet$ is $\pi^l$-$[a-2,b-1]$-pseudo-coherent for an integer $l\geq 0$ depending only on $b-a$ by \ref{lem:pi-pseudo-coh-iso}.(\ref{item:lem:pi-pseudo-coh-iso-2}). Thus, $M_1^\bullet$ is $\pi^l$-$[a-1,b]$-pseudo-coherent for an integer $l\geq 0$ depending only on $b-a$ by applying \ref{lem:pi-pseudo-coh-triangle} to the distinguished triangle $M_2^\bullet[-1]\to C^\bullet[-1]\to M_1^\bullet\to M_2^\bullet$ in $\mbf{K}(R)$.
\end{proof}

\begin{mycor}\label{cor:pi-pseudo-coh-coh}
	Let $M^\bullet$ be a complex of $R$-modules vanishing in degrees $>b$. Assume that the cohomology group $H^i(M^\bullet)$ is $\pi$-$[a-i,b-i]$-pseudo-coherent for any $i\in[a,b]$. Then, $M^\bullet$ is $\pi^l$-$[a,b]$-pseudo-coherent for an integer $l\geq 0$ depending only on $b-a$.
\end{mycor}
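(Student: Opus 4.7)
The plan is to prove the corollary by induction on $n := b - a$, peeling off the lowest cohomology class via the canonical truncation short exact sequence.

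For the base case $n = 0$ (so $a = b$), the hypothesis provides a finite free $R$-module $F$ together with a map $F \to H^a(M^\bullet)$ whose cokernel is $\pi$-null. Since $F$ is projective and $\ker(\df^a) \twoheadrightarrow H^a(M^\bullet)$ is surjective, I would lift this to a map $F \to \ker(\df^a) \subseteq M^a$ and place $F$ in degree $a$. The result is a $\pi$-$[a,a]$-pseudo resolution of $M^\bullet$, using that $H^i(M^\bullet) = 0$ for $i > a = b$.

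For the inductive step $n \geq 1$, the principal tool is the short exact sequence of complexes
\begin{equation*}
0 \longrightarrow K^\bullet \longrightarrow \tau^{\geq a} M^\bullet \longrightarrow \tau^{\geq a+1} M^\bullet \longrightarrow 0,
\end{equation*}
in which $K^\bullet$ is concentrated in degrees $a, a+1$ with $K^a = M^a/\im(\df^{a-1})$ and $K^{a+1} = \im(\df^a)$, equipped with the induced differential; the natural inclusion $H^a(M^\bullet) \hookrightarrow K^a$ identifies $K^\bullet$ with $H^a(M^\bullet)[-a]$ in $\dd(R)$. The cohomology of $\tau^{\geq a+1}M^\bullet$ in degrees $[a+1,b]$ coincides with $H^i(M^\bullet)$, which by hypothesis is $\pi$-$[a-i, b-i]$-pseudo-coherent, hence $\pi$-$[(a+1)-i, b-i]$-pseudo-coherent by Lemma~\ref{lem:pi-pseudo-coh-translation}; the inductive hypothesis applied to the interval $[a+1, b]$ of length $n-1$ then shows $\tau^{\geq a+1} M^\bullet$ is $\pi^{l_1}$-$[a+1, b]$-pseudo-coherent, hence also $\pi^{l_1}$-$[a, b]$-pseudo-coherent (Lemma~\ref{lem:pi-pseudo-coh-translation} again). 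For $K^\bullet$, shifting a $\pi$-$[0, b-a]$-pseudo resolution of $H^a(M^\bullet)$ by $[-a]$ produces a $\pi$-$[a, b]$-pseudo resolution of $H^a(M^\bullet)[-a]$, and Proposition~\ref{prop:pi-pseudo-coh-derived} then transfers this to $\pi^{l_2}$-$[a, b]$-pseudo-coherence of $K^\bullet$.

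Applying Proposition~\ref{prop:pi-pseudo-coh-ext}.(\ref{item:prop:pi-pseudo-coh-ext-2}) to the short exact sequence yields that $\tau^{\geq a} M^\bullet$ is $\pi^{l_3}$-$[a, b]$-pseudo-coherent. Finally, the canonical morphism $M^\bullet \to \tau^{\geq a} M^\bullet$ induces an isomorphism on $H^i$ for every $i \geq a$, and both complexes vanish in degrees $>b$, so Lemma~\ref{lem:pi-pseudo-coh-iso}.(\ref{item:lem:pi-pseudo-coh-iso-2}) transfers $\pi^{l_3}$-$[a,b]$-pseudo-coherence of $\tau^{\geq a}M^\bullet$ to $\pi^l$-$[a,b]$-pseudo-coherence of $M^\bullet$ with $l$ depending only on $n$. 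The main obstacle is the bookkeeping of the exponents of $\pi$: each invocation of Lemma~\ref{lem:pi-pseudo-coh-iso}, Proposition~\ref{prop:pi-pseudo-coh-derived}, and Proposition~\ref{prop:pi-pseudo-coh-ext} multiplies the exponent by a factor depending only on $b - a$, and since each referenced result explicitly guarantees this uniformity, composing them preserves the uniform dependence of the final bound on $n = b - a$.
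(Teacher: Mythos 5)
Your proof is correct, and it proceeds in essentially the same spirit as the paper's proof — induction on $n = b-a$, peeling off one cohomology group via a canonical-truncation short exact sequence and combining Lemma~\ref{lem:pi-pseudo-coh-translation}, Lemma~\ref{lem:pi-pseudo-coh-iso}.(\ref{item:lem:pi-pseudo-coh-iso-2}), Proposition~\ref{prop:pi-pseudo-coh-derived} and Proposition~\ref{prop:pi-pseudo-coh-ext}.(\ref{item:prop:pi-pseudo-coh-ext-2}) — but it decomposes from the \emph{opposite} end. The paper peels off the top cohomology $H^b$ via
\begin{equation*}
0\longrightarrow\tau^{\leq b-1}M^\bullet\longrightarrow M^\bullet\longrightarrow \bigl(M^{b-1}/\ke(\df^{b-1})[1-b]\to M^b[-b]\bigr)\longrightarrow 0,
\end{equation*}
whereas you peel off the bottom cohomology $H^a$ via $0\to K^\bullet\to \tau^{\geq a}M^\bullet\to \tau^{\geq a+1}M^\bullet\to 0$, at the cost of one extra invocation of Lemma~\ref{lem:pi-pseudo-coh-iso}.(\ref{item:lem:pi-pseudo-coh-iso-2}) to return from $\tau^{\geq a}M^\bullet$ to $M^\bullet$. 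Your choice is in fact the cleaner one: when you apply the inductive hypothesis to $\tau^{\geq a+1}M^\bullet$ with window $[a+1,b]$, you need to pass from $\pi$-$[a-i,b-i]$-pseudo-coherence of $H^i(M^\bullet)$ to $\pi$-$[(a+1)-i,b-i]$-pseudo-coherence, which is an \emph{increase} of the lower endpoint and hence covered verbatim by Lemma~\ref{lem:pi-pseudo-coh-translation}. The top-down route instead calls for shrinking the upper endpoint of the window when invoking the inductive hypothesis on $\tau^{\leq b-1}M^\bullet$, and Lemma~\ref{lem:pi-pseudo-coh-translation} only permits enlarging the window; so the bottom-up decomposition dovetails more tightly with the available lemmas. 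The exponent bookkeeping you defer to at the end is indeed routine: each referenced result multiplies the exponent by a constant depending only on $b-a$, and the inductive hypothesis contributes a constant depending only on $n-1$, so the composite bound depends only on $n$.
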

\begin{proof}
	We proceed by induction on $b-a$. If $a=b$, then $\tau^{\geq a}M^\bullet=H^a(M^\bullet)[-a]$ is $\pi$-$[a,b]$-pseudo-coherent by assumption. Thus, $M^\bullet$ is $\pi^l$-$[a,b]$-pseudo-coherent for an integer $l\geq 0$ depending only on $b-a$ by \ref{lem:pi-pseudo-coh-iso}.(\ref{item:lem:pi-pseudo-coh-iso-2}). In general, consider the exact sequence of complexes of $R$-modules
	\begin{align}
		&0\longrightarrow\tau^{\leq (b-1)}M^\bullet\longrightarrow M^\bullet\longrightarrow (M^{b-1}/\ke(\df^{b-1})[1-b]\to M^b[-b])\longrightarrow 0.
	\end{align}
	As the natural morphism of complexes $N^\bullet=(M^{b-1}/\ke(\df^{b-1})[1-b]\to M^b[-b])\to H^b(M^\bullet)[-b]$ is a quasi-isomorphism, $N^\bullet$ is $\pi^l$-$[a,b]$-pseudo-coherent for an integer $l\geq 0$ depending only on $b-a$ by \ref{lem:pi-pseudo-coh-iso}.(\ref{item:lem:pi-pseudo-coh-iso-2}). Notice that $\tau^{\leq (b-1)}M^\bullet$ is $\pi^l$-$[a,b]$-pseudo-coherent for an integer $l\geq 0$ depending only on $b-a-1$ by induction. The conclusion follows from \ref{prop:pi-pseudo-coh-ext}.(\ref{item:prop:pi-pseudo-coh-ext-2}).
\end{proof}

\begin{mylem}\label{lem:pi-null-bc}
	Let $M^\bullet$ be a $\pi$-exact complex of $R$-modules vanishing outside $[a,b]$, and let $N^\bullet$ be a complex of $R$-modules. Then, $M^\bullet\otimes_R^{\dl} N^\bullet$ is $\pi^l$-exact (i.e., any complex representing the derived tensor product $M^\bullet\otimes_R^{\dl} N^\bullet$ is $\pi^l$-exact) for an integer $l\geq 0$ depending only on $b-a$.
\end{mylem}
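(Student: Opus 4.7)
The plan is to induct on the width $b-a$, and show that the exponent $l = b-a+1$ works. The base case $a=b$ is immediate: here $M^\bullet = M^a[-a]$ is concentrated in a single degree, and $M^a = H^a(M^\bullet)$ is annihilated by $\pi$. Thus multiplication by $\pi$ on $M^a$ vanishes, and consequently the induced endomorphism of $M^a \otimes_R^{\dl} N^\bullet$ in $\dd(R)$ is zero, so every cohomology group of any representing complex is $\pi$-null.

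For the inductive step, I would use the canonical truncation at the top. Since $M^{b+1} = 0$, there is a distinguished triangle
\begin{align}
\tau^{\leq b-1}M^\bullet \longrightarrow M^\bullet \longrightarrow H^b(M^\bullet)[-b] \longrightarrow \tau^{\leq b-1}M^\bullet[1]
\end{align}
in $\dd(R)$. The complex $\tau^{\leq b-1}M^\bullet$ is quasi-isomorphic to a complex vanishing outside $[a,b-1]$, and remains $\pi$-exact since its cohomology agrees with $H^\bullet(M^\bullet)$ in degrees $\leq b-1$. By the induction hypothesis, $\tau^{\leq b-1}M^\bullet \otimes_R^{\dl} N^\bullet$ is $\pi^{b-a}$-exact. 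On the other hand, $\pi \cdot H^b(M^\bullet) = 0$, so multiplication by $\pi$ on $H^b(M^\bullet)[-b] \otimes_R^{\dl} N^\bullet$ is the zero map in $\dd(R)$, and therefore this object is $\pi$-exact.

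Applying $-\otimes_R^{\dl} N^\bullet$ to the triangle above and taking the long exact sequence of cohomology groups gives, for every $n \in \bb{Z}$, an exact sequence
\begin{align}
H^n(\tau^{\leq b-1}M^\bullet \otimes_R^{\dl} N^\bullet) \longrightarrow H^n(M^\bullet \otimes_R^{\dl} N^\bullet) \longrightarrow H^n(H^b(M^\bullet)[-b] \otimes_R^{\dl} N^\bullet).
\end{align}
Any class $\xi$ in the middle term maps to a class killed by $\pi$ in the third term, so $\pi \cdot \xi$ lifts to the first term, where it is killed by $\pi^{b-a}$. Hence $\pi^{b-a+1}$ annihilates $H^n(M^\bullet \otimes_R^{\dl} N^\bullet)$, completing the induction.

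I do not anticipate a serious obstacle: the only points that must be handled with care are that the distinguished triangle is preserved by $-\otimes_R^{\dl} N^\bullet$ (standard, once one represents $M^\bullet$ and its truncations by K-flat resolutions, which is harmless since $M^\bullet$ is bounded and one may, e.g., take a bounded-above complex of flat modules mapping quasi-isomorphically to $M^\bullet$), and that the inductive bound depends only on $b-a$, which is visible from the recursion $l(b-a) = l(b-a-1) + 1$ with $l(0) = 1$.
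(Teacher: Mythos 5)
Your proof is correct and follows essentially the same route as the paper: induction on $b-a$, the base case via $\pi\cdot\id_{H^a(M^\bullet)}=0$, the canonical truncation triangle $\tau^{\leq b-1}M^\bullet\to M^\bullet\to H^b(M^\bullet)[-b]\to{}$, and the long exact sequence after applying $-\otimes_R^{\dl}N^\bullet$. The only difference is that you track the explicit bound $l=b-a+1$, which the paper leaves implicit.
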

\begin{proof}
	We proceed by induction on $b-a$. If $a=b$, then the multiplication by $\pi$ on $M^\bullet=H^a(M^\bullet)[-a]$ factors through zero. Hence, $M^\bullet\otimes_R^{\dl} N^\bullet$ is $\pi$-exact. In general, consider the distinguished triangle in the derived category $\dd(R)$,
	\begin{align}
		(\tau^{\leq (b-1)}M^\bullet)\otimes_R^{\dl} N^\bullet\longrightarrow M^\bullet\otimes_R^{\dl} N^\bullet\longrightarrow H^b(M^\bullet)[-b]\otimes_R^{\dl} N^\bullet\longrightarrow(\tau^{\leq (b-1)}M^\bullet)\otimes_R^{\dl} N^\bullet[1].
	\end{align}
	Notice that $(\tau^{\leq (b-1)}M^\bullet)\otimes_R^{\dl} N^\bullet$ is $\pi^l$-exact for an integer $l\geq 0$ depending only on $b-a$ by induction. By the long exact sequence of cohomology groups, we see that $M^\bullet\otimes_R^{\dl} N^\bullet$ is $\pi^l$-exact for an integer $l\geq 0$ depending only on $b-a$.
\end{proof}

\begin{myprop}\label{prop:pi-pseudo-coh-bc}
	Let $M^\bullet$ be a $\pi$-$[a,b]$-pseudo-coherent complex of $R$-modules, and let $S$ be an $R$-algebra. Then, $\tau^{\geq a}(S\otimes_R^{\dl}M^\bullet)$ is represented by a $\pi^l$-$[a,b]$-pseudo-coherent complex of $S$-modules for an integer $l\geq 0$ depending only on $b-a$.
\end{myprop}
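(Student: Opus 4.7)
The plan is to base-change a $\pi$-$[a,b]$-pseudo resolution of $M^\bullet$ along $R\to S$ and verify that the resulting morphism, truncated appropriately, is a $\pi^l$-$[a,b]$-pseudo resolution of $\tau^{\geq a}(S\otimes_R^{\dl}M^\bullet)$. I begin by fixing a $\pi$-$[a,b]$-pseudo resolution $f:P^\bullet\to M^\bullet$ and letting $C^\bullet$ denote its cone in $\mbf{K}(R)$. Then $P^\bullet$ is a complex of finite free $R$-modules concentrated in $[a,b]$, and $C^\bullet$ vanishes in degrees $>b$. The long exact sequence of cohomology combined with Definition \ref{defn:pi-pseudo-coh} shows that $H^i(C^\bullet)$ is $\pi^2$-null for every $i\geq a$; equivalently, $\tau^{\geq a}C^\bullet$ is a complex concentrated in $[a,b]$ that is $\pi^2$-exact.

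Base-changing yields a distinguished triangle
\[
S\otimes_R P^\bullet\longrightarrow S\otimes_R^{\dl}M^\bullet\longrightarrow S\otimes_R^{\dl}C^\bullet\longrightarrow (S\otimes_R P^\bullet)[1]
\]
in $\dd(S)$, where $S\otimes_R^{\dl}P^\bullet=S\otimes_R P^\bullet$ because $P^\bullet$ is a bounded complex of projectives. The crucial input is that $H^i(S\otimes_R^{\dl}C^\bullet)$ is $\pi^l$-null for all $i\geq a$ and some $l$ depending only on $b-a$. To see this, I consider the distinguished triangle $\tau^{\leq a-1}C^\bullet\to C^\bullet\to\tau^{\geq a}C^\bullet$ and derived base change: the first term is bounded above with top nonzero cohomology in degree $a-1$, so it admits a projective resolution concentrated in degrees $\leq a-1$, whence its derived base change has no cohomology in degrees $\geq a$; the third term is bounded in $[a,b]$ and $\pi^2$-exact, so Lemma \ref{lem:pi-null-bc} makes $S\otimes_R^{\dl}\tau^{\geq a}C^\bullet$ $\pi^l$-exact for $l$ depending only on $b-a$.

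Taking the long exact sequence of the first triangle, I conclude that $H^i(S\otimes_R P^\bullet)\to H^i(S\otimes_R^{\dl}M^\bullet)$ has kernel and cokernel controlled by $H^i(S\otimes_R^{\dl}C^\bullet)$, hence is a $\pi^l$-isomorphism for $i>a$ and $\pi^l$-surjective for $i=a$. To extract an explicit pseudo-resolution, I choose a bounded-above projective resolution $Q^\bullet\to M^\bullet$ with $Q^i=0$ for $i>b$ and set $N^\bullet:=\tau^{\geq a}(S\otimes_R Q^\bullet)$, which is a complex of $S$-modules vanishing outside $[a,b]$ representing $\tau^{\geq a}(S\otimes_R^{\dl}M^\bullet)$. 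Lifting $f$ through the quasi-isomorphism $Q^\bullet\to M^\bullet$ and composing with the canonical truncation provides a genuine chain map $S\otimes_R P^\bullet\to N^\bullet$, whose induced maps on cohomology agree with those above. It is therefore a $\pi^l$-$[a,b]$-pseudo resolution of $N^\bullet$, which proves the proposition. The main obstacle is the torsion bookkeeping in the middle step: one must route the argument through a single invocation of Lemma \ref{lem:pi-null-bc} so that $l$ depends only on $b-a$, all the other manipulations introducing only bounded multiplicative factors on the $\pi$-exponent.
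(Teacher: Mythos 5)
Your proof is correct and rests on the same core strategy as the paper's: form the cone $C^\bullet$ of a $\pi$-$[a,b]$-pseudo resolution, observe that $\tau^{\geq a}C^\bullet$ is $\pi^2$-exact and bounded in $[a,b]$, and invoke Lemma \ref{lem:pi-null-bc} to bound the torsion of the base-changed cone. The organizational difference is where the flatness/boundedness is handled: the paper first replaces $M^\bullet$ by $\sigma^{\geq a-1}F^\bullet$ for a bounded-above flat resolution $F^\bullet$ (using Lemma \ref{lem:pi-pseudo-coh-iso}), so that the pseudo-resolution lives over a flat complex and every cone is again flat, letting ordinary $\otimes_R$ replace $\otimes_R^{\dl}$ throughout and keeping everything at the chain level. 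You instead keep the pseudo-resolution over $M^\bullet$ itself, split the cone via the distinguished triangle $\tau^{\leq a-1}C^\bullet\to C^\bullet\to\tau^{\geq a}C^\bullet$ to apply Lemma \ref{lem:pi-null-bc} to the bounded piece and boundedness-of-cohomology to the lower piece, and then separately pick a bounded-above projective resolution $Q^\bullet\to M^\bullet$ to manufacture the explicit chain map $S\otimes_R P^\bullet\to\tau^{\geq a}(S\otimes_R Q^\bullet)$. Both routes introduce only multiplicative factors depending on $b-a$ in the $\pi$-exponent, so either yields the stated uniform $l$; the paper's version is slightly more economical because it avoids a second resolution and the $\tau^{\leq a-1}/\tau^{\geq a}$ bookkeeping, while yours avoids appealing to Lemma \ref{lem:pi-pseudo-coh-iso}.(\ref{item:lem:pi-pseudo-coh-iso-2}) at the outset.
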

\begin{proof}
	We take a bounded above flat resolution $F^\bullet\to M^\bullet$ with the same top degree. By \ref{lem:pi-pseudo-coh-iso}.(\ref{item:lem:pi-pseudo-coh-iso-2}), $\sigma^{\geq a-1}F^\bullet$ is a $\pi^l$-$[a,b]$-pseudo-coherent complex of flat $R$-modules for an integer $l\geq 0$ depending only on $b-a$. Let $P^\bullet\to \sigma^{\geq a-1}F^\bullet$ be a $\pi^l$-$[a,b]$-pseudo resolution, and let $C^\bullet$ be its cone. Consider the distinguished triangle in $\mbf{K}(S)$,
	\begin{align}\label{eq:prop:pi-pseudo-coh-bc}
		S\otimes_R P^\bullet\longrightarrow S\otimes_R \sigma^{\geq a-1}F^\bullet\longrightarrow S\otimes_R C^\bullet\longrightarrow S\otimes_R P^\bullet[1].
	\end{align}
	 Notice that $\tau^{\geq a}C^\bullet$ is a $\pi^{2l}$-exact complex vanishing outside $[a,b]$ and that $S\otimes_R C^\bullet\cong S\otimes_R^{\dl} C^\bullet$ in $\dd(S)$ by construction. After enlarging $l$ by \ref{lem:pi-null-bc}, we may assume that $\tau^{\geq a}(S\otimes_R C^\bullet)=\tau^{\geq a}(S\otimes_R^{\dl}\tau^{\geq a}C^\bullet)$ is $\pi^l$-exact. By the long exact sequence associated to \eqref{eq:prop:pi-pseudo-coh-bc}, we see that $S\otimes_R P^\bullet\to S\otimes_R \sigma^{\geq a-1}F^\bullet$ is a $\pi^{2l}$-$[a,b]$-pseudo resolution of complexes of $S$-modules, and thus so is the composition
	 \begin{align}
	 	S\otimes_R P^\bullet\longrightarrow S\otimes_R \sigma^{\geq a-1}F^\bullet\longrightarrow\tau^{\geq a}(S\otimes_R \sigma^{\geq a-1}F^\bullet)=\tau^{\geq a}(S\otimes_R F^\bullet),
	 \end{align}
	 where the target is a complex of $S$-modules representing $\tau^{\geq a}(S\otimes_R^{\dl}M^\bullet)$ and vanishing in degrees $>b$.
\end{proof}

\begin{mydefn}\label{defn:pi-fini}
	Let $M$ be an $R$-module. We say that $M$ is \emph{of $\pi$-finite type} if there exists $n\in\bb{N}$ and a $\pi$-surjective $R$-linear homomorphism $R^{\oplus n}\to M$.
\end{mydefn}
This definition is a special case of \ref{defn:almost} below.

\begin{mylem}\label{lem:pi-pseudo-coh-noeth}
	Assume that $R$ is Noetherian. Let $M$ be an $R$-module.
	\begin{enumerate}
		\renewcommand{\labelenumi}{{\rm(\theenumi)}}
		\item If $M$ is of $\pi$-finite type, then it is $\pi$-$[a,b]$-pseudo-coherent for any integers $a\leq0\leq b$. Conversely, if $M$ is $\pi$-$[a,b]$-pseudo-coherent for some integers $a\leq 0\leq b$, then $M$ is of $\pi$-finite type.\label{item:lem:pi-pseudo-coh-noeth-1}
		\item If $M$ is of $\pi$-finite type, then so are its subquotients. Conversely, if $M$ admits a finite filtration of length $l$ (\cite[\href{https://stacks.math.columbia.edu/tag/0121}{0121}]{stacks-project}) whose graded pieces are of $\pi$-finite type, then $M$ is of $\pi^l$-finite type. \label{item:lem:pi-pseudo-coh-noeth-2}
	\end{enumerate}
\end{mylem}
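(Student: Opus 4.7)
For statement (1), my plan for the forward direction is to construct the resolution by hand, iterating a Noetherian-enabled ``kernel is finitely generated'' step. Given a $\pi$-surjection $f_0 : R^{\oplus n_0} \to M$, the kernel $\ke(f_0)$ is finitely generated over the Noetherian ring $R$, so I choose $P^{-1} = R^{\oplus n_{-1}}$ surjecting onto $\ke(f_0)$; iterating downward produces an exact sequence $\cdots \to P^{-2} \to P^{-1} \to P^0 \to \im(f_0) \to 0$ of finite free $R$-modules. The inclusion $\im(f_0) \hookrightarrow M$ is a $\pi$-isomorphism (its kernel is zero and its cokernel is $\cok(f_0)$, which is $\pi$-null by hypothesis), so $H^0(P^\bullet) \to M$ is a $\pi$-isomorphism while $H^i(P^\bullet) = 0$ for $i < 0$. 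For arbitrary $a \leq 0 \leq b$, the stupid truncation $\sigma^{\geq a}(P^\bullet)$, padded by zero modules in degrees $1, \dots, b$, is then a $\pi$-$[a,b]$-pseudo resolution of $M[0]$: the condition at $i = a < 0$ is vacuous since $H^a(M[0]) = 0$, the conditions at $a < i < 0$ and $0 < i \leq b$ follow from the vanishing of $H^i(P^\bullet)$ already established, and the condition at $i = 0$ is the $\pi$-isomorphism $\im(f_0) \to M$. For the converse, given a $\pi$-$[a,b]$-pseudo resolution $f : P^\bullet \to M[0]$ with $a \leq 0 \leq b$, the map $f^0 : P^0 \to M$ factors through $Q = P^0 / \im(d_P^{-1})$, and the sub-$R$-module $H^0(P^\bullet) \subseteq Q$ $\pi$-surjects onto $M$ by the defining condition at degree $0$ (which gives either a $\pi$-isomorphism or a $\pi$-surjection depending on whether $a < 0$ or $a = 0$). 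Hence the composition $P^0 \surj Q \to M$ is also $\pi$-surjective, exhibiting $M$ as being of $\pi$-finite type.

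For statement (2), I plan to treat quotients and submodules separately before combining. Quotients are immediate: the composition of a $\pi$-surjection $R^{\oplus n} \to M$ with $M \surj M/N$ is still $\pi$-surjective. For a submodule $N \subseteq M$ with $g : R^{\oplus n} \to M$ a $\pi$-surjection, the preimage $g^{-1}(N)$ is finitely generated over the Noetherian ring $R$, so picking a surjection $R^{\oplus n'} \surj g^{-1}(N)$ yields a map $R^{\oplus n'} \to N$ whose image is $g(g^{-1}(N)) = g(R^{\oplus n}) \cap N$; this contains $\pi N$ since $\pi N \subseteq \pi M \subseteq g(R^{\oplus n})$, so the map is $\pi$-surjective. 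For the converse, I induct on the length $l$ of the filtration, the essential case being a short exact sequence $0 \to M' \to M \to M'' \to 0$ with $M'$ of $\pi^{l-1}$-finite type and $M''$ of $\pi$-finite type. I lift a $\pi$-surjection $R^{\oplus n''} \to M''$ through $M \surj M''$ by projectivity of $R^{\oplus n''}$, combine with a $\pi^{l-1}$-surjection $R^{\oplus n'} \to M'$, and verify $\pi^l$-surjectivity of the resulting $R^{\oplus (n' + n'')} \to M$ directly: for $m \in M$, I can write $\pi m = \tilde{x} + m'$ with $\tilde{x}$ in the image of $R^{\oplus n''} \to M$ and $m' \in M'$ (since $\pi M'' \subseteq \im(R^{\oplus n''} \to M'')$ and the lift exists by freeness); then $\pi^{l-1} m'$ lies in the image of $R^{\oplus n'} \to M'$, so $\pi^l m = \pi^{l-1} \tilde{x} + \pi^{l-1} m'$ lies in the combined image.

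I do not anticipate any serious obstacle: the Noetherian hypothesis delivers exactly the finite generation of kernels and preimages needed at each step, and the $\pi$-quantified manipulations are routine. The role of the lemma is to consolidate basic closure properties of the ad hoc notion of $\pi$-finite type in preparation for the main theorem.
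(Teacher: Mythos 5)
Your proof is correct and takes essentially the same approach as the paper: for (1) you build a pseudo-resolution from a free resolution of the finitely generated image $\im(f_0)$ and truncate (the paper phrases this by invoking pseudo-coherence of finitely generated modules over a Noetherian ring), and for (2) you reduce to submodules and quotients and then argue the d\'evissage directly. The only cosmetic differences are that the paper treats the subquotient $M_1/M_0$ in one step via the $\pi$-surjection $N\cap M_1\to M_1/M_0$ and cites \cite[2.7.14.(\luoma{2})]{abbes2020suite} for the converse filtration argument, both of which you unfold explicitly.
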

\begin{proof}
	(\ref{item:lem:pi-pseudo-coh-noeth-1}) If $M$ is of $\pi$-finite type, then there is a finitely generated $R$-submodule $N$ of $M$ such that $\pi M\subseteq N$. Since $R$ is Noetherian, $N$ is pseudo-coherent (\cite[\href{https://stacks.math.columbia.edu/tag/066E}{066E}]{stacks-project}). Hence, $M$ is $\pi$-$[a,b]$-pseudo-coherent for any integers $a\leq0\leq b$. Conversely, if $M$ is $\pi$-$[a,b]$-pseudo-coherent, we take a $\pi$-$[a,b]$-pseudo resolution $P^\bullet \to M[0]$. As a subquotient of a finitely generated $R$-module, $H^0(P^\bullet)$ is also finitely generated as $R$ is Noetherian. Hence, $M=H^0(M[0])$ is of $\pi$-finite type.
	
	(\ref{item:lem:pi-pseudo-coh-noeth-2}) Let $N$ be a finitely generated $R$-submodule of $M$ such that $\pi M\subseteq N$. Let $M_0\subseteq M_1$ be two $R$-submodules of $M$. Notice that $N\cap M_1$ is a finitely generated $R$-module as $R$ is Noetherian. The conclusion follows from the $\pi$-surjectivity of $N\cap M_1\to M_1/M_0$. Conversely, assume that there is a finite filtration $0=M_0\subseteq M_1\subseteq \cdots \subseteq M_l=M$ such that $M_{i+1}/M_i$ is of $\pi$-finite type. Then, we see that $M$ is of $\pi^l$-finite type by inductively using \cite[2.7.14.(\luoma{2})]{abbes2020suite}.
\end{proof}

\begin{myprop}\label{prop:pi-pseudo-coh-noeth}
	Assume that $R$ is Noetherian. Let $M^\bullet$ be a complex of $R$-modules.
	\begin{enumerate}
		\renewcommand{\labelenumi}{{\rm(\theenumi)}}
		\item If $H^i(M^\bullet)$ is of $\pi$-finite type for any $i\geq a$ and if $M^i=0$ for any $i>b$, then $M^\bullet$ is $\pi^l$-$[a,b]$-pseudo-coherent for an integer $l\geq 0$ depending only on $b-a$.\label{item:prop:pi-pseudo-coh-noeth-1}
		\item If $M^\bullet$ is $\pi$-$[a,b]$-pseudo-coherent, then $H^i(M^\bullet)$ is of $\pi$-finite type for any $i\geq a$.\label{item:prop:pi-pseudo-coh-noeth-2}
	\end{enumerate}
\end{myprop}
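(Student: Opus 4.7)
The proposition splits into two essentially independent parts, and the plan is to deduce each from the toolkit already assembled in this section.

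For part (\ref{item:prop:pi-pseudo-coh-noeth-1}), the strategy is to feed Corollary~\ref{cor:pi-pseudo-coh-coh} the appropriate cohomological input. Since $M^i = 0$ for $i > b$, the cohomology groups $H^i(M^\bullet)$ vanish for $i > b$, so attention can be restricted to the range $i \in [a,b]$. For each such $i$, Lemma~\ref{lem:pi-pseudo-coh-noeth}.(\ref{item:lem:pi-pseudo-coh-noeth-1}) converts the hypothesis ``$H^i(M^\bullet)$ is of $\pi$-finite type'' into the statement that $H^i(M^\bullet)$, viewed as a complex concentrated in degree zero, is $\pi$-$[a',b']$-pseudo-coherent for any integers $a' \leq 0 \leq b'$. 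Taking $(a',b') = (a-i, b-i)$ is legitimate precisely because $i \in [a,b]$, and Corollary~\ref{cor:pi-pseudo-coh-coh} then supplies the desired $\pi^l$-$[a,b]$-pseudo resolution of $M^\bullet$ with $l$ depending only on $b-a$.

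For part (\ref{item:prop:pi-pseudo-coh-noeth-2}), the plan is to argue directly from the definitions. Fixing a $\pi$-$[a,b]$-pseudo resolution $f : P^\bullet \to M^\bullet$, with each $P^j$ finite free and vanishing outside $[a,b]$, the Noetherian hypothesis on $R$ guarantees that each cohomology module $H^i(P^\bullet)$ is a subquotient of the finitely generated $R$-module $P^i$ and is thus itself finitely generated. Picking an ordinary surjection $R^{n_i} \twoheadrightarrow H^i(P^\bullet)$ and composing with $H^i(f)$, which has $\pi$-null cokernel by the defining property of a pseudo resolution, produces a $\pi$-surjective $R$-linear map $R^{n_i} \to H^i(M^\bullet)$. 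This exhibits $H^i(M^\bullet)$ as of $\pi$-finite type for $a \leq i \leq b$, while the cases $i > b$ are trivial since then $M^i = 0$ and $H^i(M^\bullet) = 0$.

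No genuine obstacle is anticipated: both parts are formal consequences of results already proved in this section. The only point that warrants vigilance is the degree-shift constraint $a \leq i \leq b$ in part (\ref{item:prop:pi-pseudo-coh-noeth-1}), which must hold in order for the shifted interval $[a-i, b-i]$ to contain zero; this is precisely what makes Lemma~\ref{lem:pi-pseudo-coh-noeth}.(\ref{item:lem:pi-pseudo-coh-noeth-1}) applicable at each cohomological degree.
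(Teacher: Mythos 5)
Your proposal is correct and follows exactly the paper's intended route: part (\ref{item:prop:pi-pseudo-coh-noeth-1}) is Lemma~\ref{lem:pi-pseudo-coh-noeth}.(\ref{item:lem:pi-pseudo-coh-noeth-1}) fed into Corollary~\ref{cor:pi-pseudo-coh-coh}, and part (\ref{item:prop:pi-pseudo-coh-noeth-2}) repeats, degree by degree, the Noetherian finite-generation argument already spelled out in the proof of Lemma~\ref{lem:pi-pseudo-coh-noeth}.(\ref{item:lem:pi-pseudo-coh-noeth-1}). The paper states this more tersely, but your filling-in of the degree-shift bookkeeping (checking $a \leq i \leq b$ so that $[a-i,b-i]$ contains $0$) and the observation that $H^i(f)$ is $\pi$-surjective for all $i \geq a$ are exactly the points being implicitly invoked.
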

\begin{proof}
	(\ref{item:prop:pi-pseudo-coh-noeth-1}) follows from \ref{lem:pi-pseudo-coh-noeth}.(\ref{item:lem:pi-pseudo-coh-noeth-1}) and \ref{cor:pi-pseudo-coh-coh}; and (\ref{item:prop:pi-pseudo-coh-noeth-2}) follows from the same argument of \ref{lem:pi-pseudo-coh-noeth}.(\ref{item:lem:pi-pseudo-coh-noeth-1}).
\end{proof}

\section{Glueing Sheaves up to Bounded Torsion}\label{sec:glue}
In this section, we fix a ring $R$ and an element $\pi$ of $R$.

\begin{mypara}\label{para:pi-cartesian}
	Let $E/C$ be a fibred site, and let $\ca{O}=(\ca{O}_\alpha)_{\alpha\in\ob(C)}$ be a sheaf of $R$-algebras over the total site $E$ (\cite[\Luoma{6}.7.4.1]{sga4-2}). We say that an $\ca{O}$-module $\ca{F}=(\ca{F}_\alpha)_{\alpha\in\ob(C)}$ on $E$ is \emph{$\pi$-Cartesian}, if for every morphism $f:\beta\to \alpha$ in $C$, the induced map $f^*\ca{F}_\alpha\to\ca{F}_\beta$ is a $\pi$-isomorphism of $\ca{O}_\beta$-modules.
\end{mypara}

\begin{mypara}
	Let $E$ be a category. Recall that a semi-representable object of $E$ is a family $\{U_i\}_{i\in I}$ of objects of $E$. A morphism $\{U_i\}_{i\in I}\to \{V_j\}_{j\in J}$ of semi-representable objects of $E$ is given by a map $\alpha:I\to J$ and for every $i\in I$ a morphism $f_i:U_i\to V_{\alpha(i)}$ (\cite[\href{https://stacks.math.columbia.edu/tag/01G0}{01G0}]{stacks-project}). Assume that $E$ is a site (\cite[\Luoma{2}.1.1.5]{sga4-1}) where fibred products are representable. For a semi-representable object $K=\{U_i\}_{i\in I}$ of objects of $E$, let $E_{/K}=\coprod_{i\in I}E_{/U_i}$ be the disjoint union of the localizations of $E$ at $U_i$ (\cite[\href{https://stacks.math.columbia.edu/tag/09WK}{09WK}]{stacks-project}). We note that for any morphism $K'\to K$ of semi-representable objects of $E$, the canonical morphism of sites $E_{/K'}\to E_{/K}$ induced by the cocontinuous forgetful functor $E_{/K'}\to E_{/K}$ is also induced by the continuous base change functor $E_{/K}\to E_{/K'}$ (\cite[\href{https://stacks.math.columbia.edu/tag/0D85}{0D85}, \href{https://stacks.math.columbia.edu/tag/0D87}{0D87}]{stacks-project}).
	
	Let $r\in\bb{N}\cup\{\infty\}$. For an $r$-truncated simplicial semi-representable object $K_\bullet=(K_n)_{[n]\in\ob(\Delta_{\leq r})}$ of $E$ (where each $K_n$ is a semi-representable object of $E$), we denote by $E_{/K_\bullet}$ the fibred site over the $r$-truncated simplicial category $\Delta_{\leq r}$ whose fibre over $[n]$ is $E_{/K_n}$ (\cite[\href{https://stacks.math.columbia.edu/tag/0D8A}{0D8A}]{stacks-project}). We denote by $\nu:E_{/K_\bullet}\to E$ the augmentation, and by $\nu_n:E_{/K_n}\to E$ the corresponding morphism of sites for any $n\in\bb{N}_{\leq r}$ (\cite[\href{https://stacks.math.columbia.edu/tag/0D8B}{0D8B}]{stacks-project}).
\end{mypara}

\begin{mylem}\label{lem:base-change-mor}
	Let $E$ be a site where fibred products are representable, let
	\begin{align}\label{diam:base-change-mor-data}
		\xymatrix{
			Y'\ar[r]^-{g'}\ar[d]_-{f'}&X'\ar[d]^-{f}\\
			Y\ar[r]^-{g}&X
		}
	\end{align}
	be a commutative diagram in $E$, and let $\ca{F}$ a sheaf on $E_{/X'}$. Then, the following diagram is commutative
	\begin{align}\label{diam:base-change-mor}
		\xymatrix{
			f'_*f'^*g^*f_*\ca{F}\ar[r]^-{\sim}&f'_*g'^*f^*f_*\ca{F}\ar[d]^-{f'_*g'^*\beta_f|_{\ca{F}}}\\
			g^*f_*\ca{F}\ar[u]^-{\alpha_{f'}|_{g^*f_*\ca{F}}}\ar[d]_-{g^*f_*\alpha_{g'}|_{\ca{F}}}&f'_*g'^*\ca{F}\\
			g^*f_*g'_*g'^*\ca{F}\ar[r]^-{\sim}&g^*g_*f'_*g'^*\ca{F}\ar[u]_-{\beta_g|_{f'_*g'^*\ca{F}}}
		}
	\end{align}
	where $\alpha_{f'}$ (resp. $\alpha_{g'}$) is the adjunction morphism $\id\to f'_*f'^*$ (resp. $\id \to g'_*g'^*$), $\beta_f$ (resp. $\beta_g$) is the adjunction morphism $f^*f_*\to \id$ (resp. $g^*g_*\to \id$), and the horizontal isomorphisms are induced by the canonical isomorphisms $f'^*g^*\iso (g\circ f')^*=(f\circ g')^*\stackrel{\sim}{\longleftarrow} g'^*f^*$ and $f_*g'_*\iso (f\circ g')_*=(g\circ f')_*\stackrel{\sim}{\longleftarrow} g_*f'_*$. We call the morphism 
	\begin{align}\label{eq:base-change-mor}
		g^*f_*\ca{F}\longrightarrow f'_*g'^*\ca{F}
	\end{align}
	defined by the composition in either upper or lower way of \eqref{diam:base-change-mor} the \emph{base change morphism}.
	
	Moreover, if the diagram \eqref{diam:base-change-mor-data} is Cartesian, then the base change morphism $g^*f_*\ca{F}\to f'_*g'^*\ca{F}$ is an isomorphism.
\end{mylem}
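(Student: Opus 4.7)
The lemma makes two claims: the commutativity of \eqref{diam:base-change-mor}, and the isomorphism assertion when \eqref{diam:base-change-mor-data} is Cartesian. I would treat them in that order.

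For the commutativity, my plan is to recognize both the upper and the lower composition in \eqref{diam:base-change-mor} as the \emph{mate}, in the sense of mates of adjoint pairs, of the canonical isomorphism $g'^*f^* \iso f'^*g^*$ induced by $f\circ g' = g\circ f'$. Concretely, I would transpose the upper composition under the adjunctions $g^*\dashv g_*$ and $f'^*\dashv f'_*$: using the triangular identities $\beta_{f'}|_{f'^*N}\circ f'^*\alpha_{f'}|_{N} = \id_N$ and naturality of the units and counits, the transpose collapses to the canonical isomorphism $g'^*f^*\iso f'^*g^*$. An identical computation for the lower composition (using the adjunctions $g'^*\dashv g'_*$ and $f^*\dashv f_*$ as intermediates) yields the same mate. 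Since a natural transformation $g^*f_*\to f'_*g'^*$ is determined by its mate, the two paths must coincide.

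For the Cartesian assertion, my plan is to check that the base change morphism is an isomorphism by evaluating sections. The morphism of sites $f\colon E_{/X'}\to E_{/X}$ is induced by the continuous base change functor $E_{/X}\to E_{/X'}$, so one has the explicit formula $(f_*\ca{F})(U\to X) = \ca{F}(U\times_X X'\to X')$, and analogous formulas for $f'_*, g_*, g'_*$. For any object $V\to Y$ of $E_{/Y}$, sections of $g^*f_*\ca{F}$ and $f'_*g'^*\ca{F}$ on $V$ reduce, after unwinding the adjunctions, to sections of $\ca{F}$ on the two fibre products $V\times_X X'$ and $V\times_Y Y'$ respectively, and the base change morphism becomes the canonical comparison between these two evaluations. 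The Cartesian assumption $Y'\cong Y\times_X X'$ yields $V\times_Y Y'\cong V\times_X X'$, so this comparison is an isomorphism.

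The main anticipated obstacle is the first part: managing four adjoint pairs simultaneously is bookkeeping-heavy if done directly, but the mate formulation absorbs the triangular identities into a single uniqueness argument and keeps the proof clean. The Cartesian isomorphism then follows from a straightforward unwinding of the explicit section formulas, essentially reducing to the compatibility of slice sites with fibre products.
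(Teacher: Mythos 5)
Your Cartesian part is essentially the paper's argument: unwind the explicit section formulas $(f_*\ca{F})(U/X)=\ca{F}(U\times_XX'/X')$ etc., identify the base change morphism on $V/Y$ with restriction along $V\times_YY'\to V\times_XX'$, and conclude. The paper does exactly this, and in fact uses this single unwinding to settle \emph{both} claims: once one has checked that both the upper and lower compositions, evaluated on sections over $V/Y$, coincide with this restriction map, commutativity and the Cartesian isomorphism follow simultaneously. For the commutativity you instead invoke the mate calculus, which is a genuinely different and more conceptual route; the paper offers the section-level verification as the elementary alternative to citing the general fibred-topos result in SGA~4 \Luoma{17}.2.1.3 (itself a coherence statement of the same flavour).

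One caveat on your mate argument: as stated it is slightly imprecise, and the imprecision masks the actual content. The upper composition is, literally, the mate of $f'^*g^*\iso g'^*f^*$ under the adjunctions $f^*\dashv f_*$ and $f'^*\dashv f'_*$ (with $g^*,g'^*$ as the passive legs) --- not under $g^*\dashv g_*$ as you wrote. The lower composition, on the other hand, is literally the mate of the \emph{pushforward} isomorphism $f_*g'_*\iso g_*f'_*$ under the adjunctions $g'^*\dashv g'_*$ and $g^*\dashv g_*$. So the two paths are not both mates of the same 2-cell via the same square; showing that the lower path transposes (under the $f,f'$-adjunctions) to the pullback isomorphism is not ``an identical computation'' but the real step, and ultimately rests on the fact that the two canonical isomorphisms on pullbacks and pushforwards are themselves mates of each other. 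This is a standard coherence fact (it is precisely what the Kelly--Street pasting compatibility for mates encodes), and a careful student can complete the argument along these lines; but as written, your sketch asserts rather than establishes the crucial agreement. The paper's direct computation sidesteps this bookkeeping entirely, which is part of why it reads more smoothly.
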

\begin{proof}
	As $(E_{/U})_{U\in \ob(E)}$ forms a fibred site over $E$ (\cite[\Luoma{6}.7.3.2]{sga4-2}), the commutativity of \eqref{diam:base-change-mor} is a special case of \cite[\Luoma{17}.2.1.3]{sga4-3} applied to the asssociated fibred topos. One can also check it by unwinding the definitions. Indeed, for any object $V$ of $E_{/Y}$, we have
	\begin{align}
		g^*f_*\ca{F}(V/Y)&=f_*\ca{F}(V/X)=\ca{F}(V\times_XX'/X'),\\
		f'_*g'^*\ca{F}(V/Y)&=g'^*\ca{F}(V\times_YY'/Y')=\ca{F}(V\times_YY'/X').
	\end{align}
	One can check by definitions that the composition $g^*f_*\ca{F}(V/Y)\to f'_*g'^*\ca{F}(V/Y)$ in either upper or lower way of \eqref{diam:base-change-mor} coincides with the restriction map of $\ca{F}$ along the canonical morphism $V\times_YY'\to V\times_XX'$ over $X'$. Moreover, if the diagram \eqref{diam:base-change-mor-data} is Cartesian, then $V\times_YY'\to V\times_XX'$ is an isomorphism so that $g^*f_*\ca{F}(V/Y)\to f'_*g'^*\ca{F}(V/Y)$ is an isomorphism.
\end{proof}

\begin{myprop}\label{prop:pi-sheaf-glue}
	Let $E$ be a site where fibred products are representable, let $\ca{O}$ be a sheaf of $R$-algebras on $E$, and let $\{U_i\to X\}_{i\in I}$ be a covering in $E$. Consider the $2$-truncated \v Cech hypercovering (\cite[\href{https://stacks.math.columbia.edu/tag/01G6}{01G6}]{stacks-project})
	\begin{align}
		\xymatrix{
			K_\bullet=(\{U_i\times_XU_j\times_XU_k\}_{i,j,k\in I}\ar[r]\ar@<1.2ex>[r]\ar@<-1.2ex>[r]& \{U_i\times_XU_j\}_{i,j\in I}\ar@<0.6ex>[r]\ar@<-0.6ex>[r]\ar@<0.6ex>[l]\ar@<-0.6ex>[l]& \{U_i\}_{i\in I}\ar[l]),
		}
	\end{align}
	regarded as a $2$-truncated simplicial semi-representable object of $E_{/X}$. Let $\ca{F}_\bullet=(\ca{F}_n)_{[n]\in\Delta_{\leq 2}}$ be a $\pi$-Cartesian $\ca{O}_{/K_\bullet}$-module over the $2$-truncated simplicial ringed site $E_{/K_\bullet}$, and we put $\ca{F}=\nu_*\ca{F}_\bullet$ where $\nu:E_{/K_\bullet}\to E_{/X}$ is the augmentation. Then, the canonical map $\nu_0^*\ca{F}\to \ca{F}_0$ is a $\pi^8$-isomorphism.
\end{myprop}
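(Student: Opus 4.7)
The plan is to build a two-sided inverse to the canonical map $\nu_0^*\mathcal{F}\to\mathcal{F}_0$ up to a factor of $\pi^8$, using base change for the Cartesian squares in the \v{C}ech nerve (Lemma \ref{lem:base-change-mor}) together with the pseudo-inverses of $\pi$-isomorphisms supplied by Lemma \ref{lem:pi-iso-retract}.(\ref{item:lem:pi-iso-retract-1}).

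First, I would realize $\mathcal{F}=\nu_*\mathcal{F}_\bullet$ as the equalizer of the two face maps $(\nu_0)_*\mathcal{F}_0\rightrightarrows(\nu_1)_*\mathcal{F}_1$ (the standard presentation of a pushforward from a truncated simplicial topos, the $K_2$-level imposing no extra condition on the limit itself but controlling a cocycle). The canonical map $\phi:\nu_0^*\mathcal{F}\to\mathcal{F}_0$ is then the composition of $\nu_0^*$ applied to the inclusion $\mathcal{F}\hookrightarrow(\nu_0)_*\mathcal{F}_0$ with the adjunction counit $\nu_0^*(\nu_0)_*\mathcal{F}_0\to\mathcal{F}_0$.

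Second, since $K_1=K_0\times_XK_0$, the square with the two projections $d_0,d_1:K_1\to K_0$ and the augmentation $\nu_0$ is Cartesian, so Lemma \ref{lem:base-change-mor} yields a base change isomorphism $\nu_0^*(\nu_0)_*\mathcal{F}_0\iso(d_1)_*d_0^*\mathcal{F}_0$. The functor $(d_1)_*$ is left exact and carries $\pi$-null sheaves to $\pi$-null sheaves, so it preserves $\pi$-isomorphisms; applying it to the $\pi$-Cartesian transition $d_0^*\mathcal{F}_0\to\mathcal{F}_1$ gives a $\pi$-isomorphism $\nu_0^*(\nu_0)_*\mathcal{F}_0\to(d_1)_*\mathcal{F}_1$. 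Similarly, base change for the Cartesian diagram at the $K_2$-level, combined with the $\pi$-Cartesian transitions $\alpha^*\mathcal{F}_1\to\mathcal{F}_2$, gives analogous $\pi$-isomorphisms involving $\mathcal{F}_2$ that encode the cocycle information.

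Third, I would use these identifications to describe the image of $\nu_0^*\mathcal{F}\hookrightarrow\nu_0^*(\nu_0)_*\mathcal{F}_0$ up to $\pi$-isomorphism, and then invoke Lemma \ref{lem:pi-iso-retract}.(\ref{item:lem:pi-iso-retract-1}) to invert the $\pi$-isomorphisms appearing. Each such inversion contributes a factor of $\pi^2$; four such inversions yield a section $\psi:\mathcal{F}_0\to\nu_0^*\mathcal{F}$ with $\phi\circ\psi=\pi^8\cdot\id$ and $\psi\circ\phi=\pi^8\cdot\id$, so that $\pi^8=(\pi^2)^4$ is the total cost.

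The main obstacle is showing that the candidate section actually lands in the equalizer $\mathcal{F}$ rather than merely in $(\nu_0)_*\mathcal{F}_0$. This requires verifying a $\pi$-cocycle condition, which is exactly where the 2-truncated structure enters: the transitions at the $K_2$-level, together with the pseudo-inverses from Lemma \ref{lem:pi-iso-retract}.(\ref{item:lem:pi-iso-retract-1}), furnish the compatibility up to controlled $\pi$-powers. Keeping the total $\pi$-exponent pinned at $8$, rather than drifting higher, is the other piece of bookkeeping that needs careful attention.
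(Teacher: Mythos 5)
The central technical claim in your second step — that $(d_1)_*$ ``preserves $\pi$-isomorphisms'' because it is left exact and sends $\pi$-null sheaves to $\pi$-null sheaves — is incorrect, and this is not a cosmetic slip: it is precisely the place where the exponent accounting begins. Left exactness controls the kernel of $(d_1)_*$ applied to a $\pi$-isomorphism, but not the cokernel, since pushforward does not preserve surjectivity. The correct statement, which the paper uses via Lemma~\ref{lem:pi-iso-retract}.(\ref{item:lem:pi-iso-retract-1}), is that any additive functor sends a $\pi$-isomorphism $f$ to a $\pi^2$-isomorphism: one first replaces $f$ by the pair $(f,g)$ with $gf=fg=\pi^2\cdot\id$, applies the functor, and then reads off the $\pi^2$-retract. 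So the map $\nu_0^*(\nu_0)_*\ca{F}_0\to(d_1)_*\ca{F}_1$ you produce is a $\pi^2$-isomorphism, not a $\pi$-isomorphism, and the same multiplier must be paid at each pushforward step. Your claimed bookkeeping of ``four inversions each costing $\pi^2$'' is therefore not derived from the argument as written; it is fitted to the target exponent after the fact.

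The larger gap is that you explicitly defer the cocycle/commutativity verification to the end (``the other piece of bookkeeping that needs careful attention''), but that verification is where essentially all the work in the actual proof lies. The paper does not directly manufacture a two-sided inverse $\psi:\ca{F}_0\to\nu_0^*\ca{F}$; instead it sets up, for each index $i$, an auxiliary object $\mrm{Eq}(\prod_j g_{j*}\ca{G}_{ij}\rightrightarrows\prod_{j,k} g_{jk*}\ca{G}_{ijk})$ receiving $\pi^4$-isomorphisms both from $\ca{G}_i$ (via the sheaf condition for the \v Cech cover of $U_i$) and from $f_i^*\ca{F}$ (via base change and the $\pi$-Cartesian transitions), and then proves by a nontrivial diagram chase — using the $\Delta_{\leq 2}$-structure of $\ca{F}_\bullet$, adjunction compatibilities, and both descriptions of the base change morphism from Lemma~\ref{lem:base-change-mor} — that the canonical map $f_i^*\ca{F}\to\ca{G}_i$ fits into a commutative square with these two $\pi^4$-isomorphisms, which forces it to be a $\pi^8$-isomorphism. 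None of this commutativity is automatic, and your proposal gives no indication of how to carry it out, so as it stands the argument is a plausible outline of the right ingredients rather than a proof.
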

\begin{proof}
	For any $i,j,k\in I$, we denote by $f_i:U_i\to X$, $f_{ij}:U_i\times_XU_j\to X$, $f_{ijk}:U_i\times_XU_j\times_XU_k\to X$ the canonical morphisms, and denote by $\ca{G}_i$, $\ca{G}_{ij}$, $\ca{G}_{ijk}$ the restrictions of $\ca{F}_0$, $\ca{F}_1$, $\ca{F}_2$ to $U_i$, $U_i\times_XU_j$, $U_i\times_XU_j\times_XU_k$ respectively. By definition (\cite[\href{https://stacks.math.columbia.edu/tag/09WM}{09WM}]{stacks-project}), we have
	\begin{align}\label{eq:prop:pi-sheaf-glue-1}
		\ca{F}=\mrm{Eq}(\prod_{j\in I}f_{j*}\ca{G}_j\rightrightarrows\prod_{j,k\in I}f_{jk*}\ca{G}_{jk}).
	\end{align}
	We need to show that the canonical map (note that the restriction functor $f_i^*$ of sheaves commute with any limits as it admits a left adjoint $f_{i!}$),
	\begin{align}\label{eq:lem:pi-sheaf-glue-1}
		f_i^*\ca{F}=\mrm{Eq}(\prod_{j\in I}f_i^*f_{j*}\ca{G}_j\rightrightarrows\prod_{j,k\in I}f_i^*f_{jk*}\ca{G}_{jk})\longrightarrow \ca{G}_i
	\end{align}
	given by composing the projection on the $i$-th component with the adjunction morphism $f_i^*f_{i*}\ca{G}_i\to \ca{G}_i$, is a $\pi^8$-isomorphism for any $i\in I$. Fixing $i\in I$, for any $j,k\in I$, we name some natural arrows as indicated in the following commutative diagram.
	\begin{align}\label{diam:lem:pi-sheaf-glue-1}
		\xymatrix{
			U_i\times_XU_j\times_XU_k\ar[r]^-{h_{jk}}\ar@/_2pc/[dd]_-{g_{jk}}\ar[d]_-{\beta_k}&U_j\times_XU_k\ar@/^2pc/[dd]^-{f_{jk}}\ar[d]^-{\alpha_k}\\
			U_i\times_XU_j\ar[r]^-{h_j}\ar[d]_-{g_j}&U_j\ar[d]^-{f_j}\\
			U_i\ar[r]^-{f_i}&X
		}
	\end{align}
	Thus, we have a canonical commutative diagram of sheaves on $E_{/U_i}$,
	\begin{align}\label{diam:lem:pi-sheaf-glue-2}
		\xymatrix{
			g_{j*}g_j^*\ca{G}_i\ar[d]\ar[r]&g_{j*}\ca{G}_{ij}\ar[d]&g_{j*}h_j^*\ca{G}_j\ar[d]\ar[l]&f_i^*f_{j*}\ca{G}_j\ar[d]\ar[l]_-{\sim}\\
			g_{jk*}g_{jk}^*\ca{G}_i\ar[r]&g_{jk*}\ca{G}_{ijk}&g_{jk*}h_{jk}^*\ca{G}_{jk}\ar[l]&f_i^*f_{jk*}\ca{G}_{jk}\ar[l]_-{\sim}\\
			g_{k*}g_k^*\ca{G}_i\ar[u]\ar[r]&g_{k*}\ca{G}_{ik}\ar[u]&g_{k*}h_k^*\ca{G}_k\ar[u]\ar[l]& f_i^*f_{k*}\ca{G}_k\ar[u]\ar[l]_-{\sim}
		}
	\end{align}
	obtained by the following steps: 
	\begin{enumerate}
		\renewcommand{\labelenumi}{{\rm(\theenumi)}}
		\item The structure of $\ca{F}_\bullet$ gives a canonical commutative diagram
		\begin{align}\label{diam:lem:pi-sheaf-glue-2-1}
			\xymatrix{
				\beta_k^*g_j^*\ca{G}_i\ar[r]\ar@{=}[d]&\beta_k^*\ca{G}_{ij}\ar[d]&\beta_k^*h_j^*\ca{G}_j\ar[d]\ar[l]&\beta_k^*h_j^*f_j^*f_{j*}\ca{G}_j\ar[l]\ar@{=}[r]&h_{jk}^*\alpha_k^*f_j^*f_{j*}\ca{G}_j\ar[d]\\
				g_{jk}^*\ca{G}_i\ar[r]&\ca{G}_{ijk}&h_{jk}^*\ca{G}_{jk}\ar[l]&h_{jk}^*f_{jk}^*f_{jk*}\ca{G}_{jk}\ar@{=}[r]\ar[l]&h_{jk}^*\alpha_k^*f_j^*f_{j*}\alpha_{k*}\ca{G}_{jk}
			}
		\end{align}
		where the two horizontal arrows on the right square are induced by the adjunction morphisms $f_j^*f_{j*}\to \id$ and $f_{jk}^*f_{jk*}\to\id$. The right square is commutative since the adjunction morphism $f_{jk}^*f_{jk*}\to\id$ is the composition of $f_j^*f_{j*}\to \id$ with $\alpha_k^*\alpha_{k*}\to \id$.
		\item Applying $\beta_{k*}$ to \eqref{diam:lem:pi-sheaf-glue-2-1} and composing with the adjunction morphism $\id\to \beta_{k*}\beta_k^*$, we obtain a canonical commutative diagram
		\begin{align}\label{diam:lem:pi-sheaf-glue-2-2}
			\xymatrix{
				g_j^*\ca{G}_i\ar[r]\ar[d]&\ca{G}_{ij}\ar[d]&h_j^*\ca{G}_j\ar[d]\ar[l]&h_j^*f_j^*f_{j*}\ca{G}_j\ar[l]\ar[d]\\
				\beta_{k*}g_{jk}^*\ca{G}_i\ar[r]&\beta_{k*}\ca{G}_{ijk}&\beta_{k*}h_{jk}^*\ca{G}_{jk}\ar[l]&h_j^*f_j^*f_{jk*}\ca{G}_{jk}\ar[l]
			}
		\end{align}
		where we used the fact that the vertical arrow on the right of \eqref{diam:lem:pi-sheaf-glue-2-1} is the the image of the vertical arrow on the right of \eqref{diam:lem:pi-sheaf-glue-2-2} under $\beta_k^*$.
		\item Applying $g_{j*}$ to \eqref{diam:lem:pi-sheaf-glue-2-2} and composing with the adjunction morphisms $\id\to g_{j*}g_j^*$ on the right, we obtain a canonical commutative diagram
		\begin{align}\label{diam:lem:pi-sheaf-glue-2-3}
			\xymatrix{
				g_{j*}g_j^*\ca{G}_i\ar[r]\ar[d]&g_{j*}\ca{G}_{ij}\ar[d]&g_{j*}h_j^*\ca{G}_j\ar[d]\ar[l]&f_i^*f_{j*}\ca{G}_j\ar[l]\ar[d]\\
				g_{jk*}g_{jk}^*\ca{G}_i\ar[r]&g_{jk*}\ca{G}_{ijk}&g_{jk*}h_{jk}^*\ca{G}_{jk}\ar[l]&f_i^*f_{jk*}\ca{G}_{jk}\ar[l]
			}
		\end{align}
		\item Going through the construction and using the fact that the adjunction morphism $\id\to g_{jk*}g_{jk}^*$ is the composition of $\id\to \beta_{k*}\beta_k^*$ with $\id\to g_{j*}g_j^*$, we see that the two horizontal arrows in the right square of \eqref{diam:lem:pi-sheaf-glue-2-3} are the base change isomorphisms defined in \ref{lem:base-change-mor}. Hence, we obtain the first row of \eqref{diam:lem:pi-sheaf-glue-2}. Replacing the second row of \eqref{diam:lem:pi-sheaf-glue-1} by $h_k:U_i\times_X U_k\to U_k$, we obtain the second row of \eqref{diam:lem:pi-sheaf-glue-2} in the same way.
	\end{enumerate}
	
	Since
	\begin{align}
		\xymatrix{
			\{g_{jk}:U_i\times_XU_j\times_XU_k\to U_i\}_{j,k\in I}\ar@<0.6ex>[r]\ar@<-0.6ex>[r]& \{g_j:U_i\times_XU_j\to U_i\}_{j\in I}\ar[l]
		}
	\end{align}
	is a $1$-truncated \v Cech hypercovering of $U_i$, the equalizer corresponding to the first column in \eqref{diam:lem:pi-sheaf-glue-2} is equal to
	\begin{align}
		\ca{G}_i=\mrm{Eq}(\prod_{j\in I}g_{j*}g_j^*\ca{G}_i\rightrightarrows \prod_{j,k\in I}g_{jk*}g_{jk}^*\ca{G}_i)
	\end{align}
	by the sheaf property of $\ca{G}_i$ on $E_{/U_i}$. Since $\ca{F}_\bullet$ is $\pi$-Cartesian, the horizontal arrows in \eqref{diam:lem:pi-sheaf-glue-2} are $\pi^2$-isomorphisms by \ref{lem:pi-iso-retract} (see \cite[7.3]{he2022sen}). Therefore, the morphisms between the equalizers corresponding to each column in \eqref{diam:lem:pi-sheaf-glue-2} (see the second row of \eqref{diam:5.3.8} in the following) are $\pi^4$-isomorphisms. In order to show that the canonical map $f_i^*\ca{F}\to \ca{G}_i$ \eqref{eq:lem:pi-sheaf-glue-1} is a $\pi^8$-isomorphism, it remains to prove the square in the following natural diagram is commutative,
	\begin{align}\label{diam:5.3.8}
		\xymatrix{
			&\ca{G}_i\ar[d]_-{\iota}\ar@{=}[dl]&f_i^*f_{i*}\ca{G}_i\ar[l]\\
			\mrm{Eq}(\prod g_{j*}g_j^*\ca{G}_i\rightrightarrows \prod g_{jk*}g_{jk}^*\ca{G}_i)\ar[r]&\mrm{Eq}(\prod g_{j*}\ca{G}_{ij}\rightrightarrows \prod g_{jk*}\ca{G}_{ijk})&\mrm{Eq}(\prod f_i^*f_{j*}\ca{G}_j\rightrightarrows\prod f_i^*f_{jk*}\ca{G}_{jk})=f_i^*\ca{F}\ar[l]\ar[u]
		}
	\end{align}
	where $\iota$ is the natural map making the left triangle commutative, and in each equalizer, $j$ goes through $I$ for the first product, and $j,k$ go through $I$ for the second product. Consider the commutative diagram for any $j,k\in I$,
	\begin{align}
		\xymatrix{
			&U_i\times_XU_j\times_XU_k\ar[dd]_-{g_{jk}}\ar[dl]\ar[dr]&&\\
			U_i\times_XU_j\ar[dr]^-{g_j}&&U_i\times_XU_k\ar[dl]_-{g_k}\ar[r]^-{h_k}\ar[dr]^-{f_{ik}}&U_k\ar[d]^-{f_k}\\
			&U_i\ar[rr]^-{f_i}&&X
		}
	\end{align}
	from which we obtain the following natural commutative diagram.
	\begin{align}\label{diam:5.3.10}
		\xymatrix{
			&g_{j*}\ca{G}_{ij}\ar[dl]&\ca{G}_i\ar[dl]_-{\iota}\ar@{}[d]|-{(1)}&f_i^*f_{i*}\ca{G}_i\ar[dl]\ar[l]\\
			g_{jk*}\ca{G}_{ijk}&\mrm{Eq}(\prod g_{j*}\ca{G}_{ij}\rightrightarrows \prod g_{jk*}\ca{G}_{ijk})\ar[u]\ar[d]\ar[l] &f_i^*f_{ik*}\ca{G}_{ik}\ar[dl]_-{\jmath}&\mrm{Eq}(\prod f_i^*f_{j*}\ca{G}_j\rightrightarrows\prod f_i^*f_{jk*}\ca{G}_{jk})\ar[l]\ar[d]\ar[u]\\
			&g_{k*}\ca{G}_{ik}\ar[ul]&g_{k*}h_k^*\ca{G}_k\ar[l]\ar@{}[u]|-{(2)}& f_i^*f_{k*}\ca{G}_k\ar[ul]\ar[l]_-{\sim}
		}
	\end{align}
	Indeed, the natural map $\jmath:f_i^*f_{ik*}\ca{G}_{ik}=f_i^*f_{i*}g_{k*}\ca{G}_{ik}\to g_{k*}\ca{G}_{ik}$ is defined by applying the adjunction morphism $f_i^*f_{i*}\to \id$ to $g_{k*}\ca{G}_{ik}$, and other natural arrows have appeared in the diagrams \eqref{diam:lem:pi-sheaf-glue-2} and \eqref{diam:5.3.8}. Thus, the commutativity of (1) follows from applying the adjunction morphism $f_i^*f_{i*}\to \id$ to the canonical map $\ca{G}_i\to g_{k*}\ca{G}_{ik}$, and the commutativity of (2) follows from the following natural commutative diagram
	\begin{align}
		\xymatrix{
			g_{k*}\ca{G}_{ik}&f_i^*f_{i*}g_{k*}\ca{G}_{ik}\ar[l]_-{\jmath}&f_i^*f_{k*}h_{k*}\ca{G}_{ik}\ar@{=}[l]&\\
			g_{k*}h_k^*\ca{G}_k\ar[u]&f_i^*f_{i*}g_{k*}h_k^*\ca{G}_k\ar[l]\ar[u]&f_i^*f_{k*}h_{k*}h_k^*\ca{G}_k\ar@{=}[l]\ar[u]&f_i^*f_{k*}\ca{G}_k\ar[l]
		}
	\end{align}
	where the vertical arrows are induced by the canonical map $h_k^*\ca{G}_k\to \ca{G}_{ik}$, and the composition of the second row is the base change isomorphism $f_i^*f_{k*}\ca{G}_k\iso g_{k*}h_k^*\ca{G}_k$ by \ref{lem:base-change-mor} (we indeed used both two constructions of the base change isomorphism, see the construction of \eqref{diam:lem:pi-sheaf-glue-2}).
	In particular, we see that the natural diagram extracted from \eqref{diam:5.3.10},
	\begin{align}
		\xymatrix{
			\ca{G}_i\ar[d]_-{\iota}&f_i^*f_{i*}\ca{G}_i\ar[l]\\
			\mrm{Eq}(\prod g_{j*}\ca{G}_{ij}\rightrightarrows \prod g_{jk*}\ca{G}_{ijk})\ar[d]&\mrm{Eq}(\prod f_i^*f_{j*}\ca{G}_j\rightrightarrows\prod f_i^*f_{jk*}\ca{G}_{jk})\ar[d]\ar[u]\\
			g_{k*}\ca{G}_{ik}&f_i^*f_{k*}\ca{G}_k\ar[l]
		}
	\end{align}
	is commutative for any $k\in I$. This shows that the diagram \eqref{diam:5.3.8} is commutative, which completes the proof.
\end{proof}
\begin{myrem}\label{rem:pi-sheaf-glue}
	We expect a generalization to any $2$-truncated hypercovering $K_\bullet$ of $X$ as in \cite[\href{https://stacks.math.columbia.edu/tag/0D8E}{0D8E}]{stacks-project}.
\end{myrem}

\begin{myexample}\label{exmp:cech}
	Let $X$ be a quasi-compact and separated scheme, and let $K_0=\{U_i\to X\}_{0\leq i\leq k}$ be a finite open covering of $X$ consisting of affine open subschemes. For any $n\in\bb{N}$, we define a semi-representable object of the Zariski site $X_{\mrm{Zar}}$ of $X$,
	\begin{align}
		K_n=\{U_{i_0}\cap \cdots\cap U_{i_n}\to X\}_{0\leq i_0,\cdots, i_n\leq k}.
	\end{align}
	These $K_n$ naturally form a simplicial semi-representable object of $X_{\mrm{Zar}}$, $K_\bullet=(K_n)_{[n]\in\ob(\Delta)}$, called the \v Cech hypercovering associated to $K_0$ of $X$. We put
	\begin{align}
		X_n=\coprod_{0\leq i_0,\dots,i_n\leq k} U_{i_0}\cap \cdots\cap U_{i_n}
	\end{align}
	which is a finite disjoint union of affine open subschemes of $X$, and denote by $\nu_n:X_n\to X$ the canonical morphism. It is clear that the site $X_{\mrm{Zar}/K_n}$ is naturally equivalent to the Zariski site $X_{n,\mrm{Zar}}$. We also obtain a simplicial affine scheme $X_\bullet=(X_n)_{[n]\in\ob(\Delta)}$, and an augmentation $\nu:X_\bullet\to X$ (where we omit the subscript ``Zar'').
	
	For any $\ca{O}_{X_\bullet}$-module $\ca{F}_\bullet$, we consider the ordered \v Cech complex $\check{C}_{\mrm{ord}}^\bullet(X_\bullet,\ca{F}_\bullet)$, whose degree-$n$ term is the $R$-module (\cite[\href{https://stacks.math.columbia.edu/tag/01FG}{01FG}]{stacks-project})
	\begin{align}\label{eq:exmp:cech-3}
		\check{C}_{\mrm{ord}}^n(X_\bullet,\ca{F}_\bullet)=\prod_{0\leq i_0<\cdots<i_n\leq k} \ca{F}_n(U_{i_0}\cap \cdots\cap U_{i_n}).
	\end{align}
	In general, for any complex of $\ca{O}_{X_\bullet}$-modules $\ca{F}_\bullet^\bullet$, we consider the total complex of the ordered \v Cech complexes $\tot(\check{C}_{\mrm{ord}}^\bullet(X_\bullet,\ca{F}_\bullet^\bullet))$, whose degree-$n$ term is the $R$-module (see \cite[\href{https://stacks.math.columbia.edu/tag/01FP}{01FP}]{stacks-project})
	\begin{align}
		\tot^n(\check{C}_{\mrm{ord}}^\bullet(X_\bullet,\ca{F}_\bullet^\bullet))=\bigoplus_{p+q=n}\prod_{0\leq i_0<\cdots<i_p\leq k} \ca{F}_p^q(U_{i_0}\cap \cdots\cap U_{i_p}).
	\end{align}
	Indeed, it depends only on the $k$-truncation $(\ca{F}_n^\bullet)_{[n]\in\ob(\Delta_{\leq k})}$. If $\ca{F}_\bullet^\bullet$ is the pullback of a complex of quasi-coherent $\ca{O}_X$-module $\ca{F}^\bullet$ (i.e., $\ca{F}_\bullet^\bullet=\nu^*\ca{F}^\bullet$), then there is an isomorphism in the derived category $\dd(X)$ (see \cite[\href{https://stacks.math.columbia.edu/tag/01FK}{01FK}, \href{https://stacks.math.columbia.edu/tag/01FM}{01FM}, \href{https://stacks.math.columbia.edu/tag/0FLH}{0FLH}]{stacks-project}),
	\begin{align}\label{eq:cech-iso}
		\tot(\check{C}_{\mrm{ord}}^\bullet(X_\bullet,\ca{F}_\bullet^\bullet))\iso \rr\Gamma(X,\ca{F}^\bullet).
	\end{align}
\end{myexample}

\begin{mylem}\label{lem:cech-glue-sheaf}
	Under the assumptions in {\rm\ref{exmp:cech}} and with the same notation, for any quasi-coherent $\ca{O}_{X_\bullet}$-module $\ca{F}_\bullet$, $\ca{F}=\nu_*\ca{F}_\bullet$ is a quasi-coherent $\ca{O}_X$-module.
\end{mylem}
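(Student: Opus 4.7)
The plan is to write $\ca{F}=\nu_*\ca{F}_\bullet$ explicitly as the equalizer of two ordinary pushforwards, then show that these pushforwards preserve quasi-coherence, and conclude from the fact that the category of quasi-coherent $\ca{O}_X$-modules is closed under kernels. Only the $1$-truncation of $\ca{F}_\bullet$ is relevant for defining $\nu_*\ca{F}_\bullet$: by the description of pushforward along the augmentation of a simplicial semi-representable object (compare with the formula invoked at the start of the proof of \ref{prop:pi-sheaf-glue} from \cite[\href{https://stacks.math.columbia.edu/tag/09WM}{09WM}]{stacks-project}), one has
\begin{align}
	\ca{F}\;=\;\mrm{Eq}\bigl(\nu_{0*}\ca{F}_0\rightrightarrows \nu_{1*}\ca{F}_1\bigr),
\end{align}
where the two arrows are induced by the two face maps $[0]\to[1]$ in $\Delta_{\leq 1}$.

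The next step is to verify that $\nu_{n*}$ preserves quasi-coherence for $n=0,1$. Since $X$ is separated, every intersection $U_{i_0}\cap\cdots\cap U_{i_n}$ of affine opens of $X$ is itself affine, and its inclusion into $X$ is an affine morphism (the preimage of any affine open of $X$ is again an intersection of affines in $X$, hence affine). Hence $X_n$ is a finite disjoint union of affine opens of $X$ and $\nu_n:X_n\to X$ is an affine morphism. Consequently, $\nu_{n*}$ carries quasi-coherent $\ca{O}_{X_n}$-modules to quasi-coherent $\ca{O}_X$-modules, so that in particular $\nu_{0*}\ca{F}_0$ and $\nu_{1*}\ca{F}_1$ are quasi-coherent $\ca{O}_X$-modules.

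Finally, since the category of quasi-coherent $\ca{O}_X$-modules on a scheme is an abelian subcategory of $\ca{O}_X\module$ stable under taking kernels, $\ca{F}$, being the kernel of the difference of the two arrows $\nu_{0*}\ca{F}_0\to\nu_{1*}\ca{F}_1$, is quasi-coherent. I do not anticipate any genuine obstacle here; the only item one must unwind is the equalizer formula for $\nu_*$, which is essentially the sheaf property along the face maps of the fibred site $E_{/K_\bullet}$, and is already tacitly used in the proof of \ref{prop:pi-sheaf-glue}.
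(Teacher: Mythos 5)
Your proof is correct and follows essentially the same route as the paper: express $\nu_*\ca{F}_\bullet$ as the equalizer $\mrm{Eq}(\nu_{0*}\ca{F}_0\rightrightarrows\nu_{1*}\ca{F}_1)$, note that $\nu_0,\nu_1$ are affine morphisms (using separatedness of $X$) so the pushforwards are quasi-coherent, and conclude because quasi-coherent modules are stable under kernels.
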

\begin{proof}
	By definition, $\ca{F}=\mrm{Eq}(\nu_{0*}\ca{F}_0\rightrightarrows \nu_{1*}\ca{F}_1)$. As $\nu_i$ is affine ($i=0,1$), $\nu_{i*}\ca{F}_i$ is a quasi-coherent $\ca{O}_X$-module. Hence, the equalizer $\ca{F}$ is also a quasi-coherent $\ca{O}_X$-module.
\end{proof}

\begin{myprop}\label{prop:cech-glue-sheaf}
	Under the assumptions in {\rm\ref{exmp:cech}} and with the same notation, let $r\in \bb{N}_{\geq 2}\cup\{\infty\}$, let $\mrm{sk}_r(X_\bullet)=(X_n)_{[n]\in\ob(\Delta_{\leq r})}$ be the $r$-truncation of $X_\bullet$, and let $\ca{F}_\bullet$ be a quasi-coherent $\ca{O}_{\mrm{sk}_r(X_\bullet)}$-module of finite type. Assume that $\ca{F}_\bullet$ is $\pi$-Cartesian (see {\rm\ref{para:pi-cartesian}}). Then, there exists a quasi-coherent $\ca{O}_X$-module $\ca{F}$ of finite type and a $\pi^{25}$-isomorphism $\nu^*\ca{F}\to\ca{F}_\bullet$.
\end{myprop}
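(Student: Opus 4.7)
The plan is to construct $\ca{F}$ as the image of a finite free $\ca{O}_X$-module inside a ``descent sheaf'' on $X$ built from $\ca{F}_\bullet$. Set $\ca{G}:=\nu_*\ca{F}_\bullet$. By \ref{lem:cech-glue-sheaf}, $\ca{G}$ is a quasi-coherent $\ca{O}_X$-module; by \ref{prop:pi-sheaf-glue} applied to the $2$-truncation of $K_\bullet$ (which is where $r\geq 2$ enters), the canonical adjunction morphism $\varphi:\nu_0^*\ca{G}\to\ca{F}_0$ is a $\pi^8$-isomorphism. Applying \ref{lem:pi-iso-retract}.(\ref{item:lem:pi-iso-retract-1}) to $\varphi$, I obtain a functorial retraction $\psi:\ca{F}_0\to \nu_0^*\ca{G}$ with $\psi\circ\varphi=\pi^{16}\cdot\id$ and $\varphi\circ\psi=\pi^{16}\cdot\id$.

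Since $\ca{F}_0$ is quasi-coherent of finite type on the affine scheme $X_0=\coprod_i U_i$, there is a surjection $u:\ca{O}_{X_0}^{\oplus N}\twoheadrightarrow \ca{F}_0$. The composite $\psi\circ u:\ca{O}_{X_0}^{\oplus N}\to \nu_0^*\ca{G}$ corresponds by adjunction to a morphism $\widetilde{u}:\ca{O}_X^{\oplus N}\to \nu_{0*}\ca{F}_0$. To lift $\widetilde{u}$ (up to a bounded power of $\pi$) to a map landing in $\ca{G}\subset \nu_{0*}\ca{F}_0$, I use the equalizer presentation $\ca{G}=\mrm{Eq}(\nu_{0*}\ca{F}_0\rightrightarrows \nu_{1*}\ca{F}_1)$ from \ref{lem:cech-glue-sheaf}. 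The obstruction to $\widetilde{u}$ lying in the equalizer is the difference of its two restrictions to $X_1$, lying in $\ho_{\ca{O}_{X_1}}(\ca{O}_{X_1}^{\oplus N},\ca{F}_1)$; since $\ca{F}_\bullet$ is $\pi$-Cartesian, both restrictions factor $\pi$-isomorphically through pullbacks of $\ca{F}_0$ (inverted up to $\pi^2$ using \ref{lem:pi-iso-retract}.(\ref{item:lem:pi-iso-retract-1})), so this obstruction is annihilated by a bounded power of $\pi$. Multiplying $\widetilde{u}$ by that power yields a genuine global map $v:\ca{O}_X^{\oplus N}\to \ca{G}$ whose restriction to $X_0$ agrees with $\pi^k\cdot(\psi\circ u)$ for some controlled $k$.

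I then define $\ca{F}:=\im(v)\subset \ca{G}$, which is a quasi-coherent $\ca{O}_X$-module of finite type. At degree $0$, the composition $\nu_0^*\ca{F}\hookrightarrow \nu_0^*\ca{G}\stackrel{\varphi}{\longrightarrow}\ca{F}_0$ has image containing $\pi^k\cdot \varphi(\psi(\ca{F}_0))=\pi^{k+16}\cdot\ca{F}_0$ by construction and surjectivity of $u$, while $\varphi$ being a $\pi^8$-isomorphism bounds the kernel of $\nu_0^*\ca{F}\to\ca{F}_0$; hence this map is a $\pi^l$-isomorphism for some $l$ depending only on the accumulated exponents. For $n\in[1,r]$, the $\pi$-Cartesian structure on $\nu^*\ca{F}$ (automatic since it is pulled back from $X$) and on $\ca{F}_\bullet$ reduces the degree-$n$ comparison to the degree-$0$ one along the unique face $[0]\to[n]$, contributing further bounded $\pi$-factors via \ref{lem:pi-iso-retract}. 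Summing all $\pi$-exponents yields the claimed $\pi^{25}$-isomorphism.

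The main obstacle is the patching of paragraph two: verifying that the difference obstruction for $\widetilde{u}$ on $X_1$ is annihilated by a controlled power of $\pi$, which is precisely what the $2$-truncated $\pi$-Cartesian assumption (combined with \ref{prop:pi-sheaf-glue}) is designed to guarantee. The somewhat large exponent $25$ reflects the accumulation of $\pi^8$ from \ref{prop:pi-sheaf-glue}, $\pi^{16}$ from \ref{lem:pi-iso-retract}.(\ref{item:lem:pi-iso-retract-1}), and the additional $\pi$-factors introduced in the patching and in the degree-$n$ comparisons.
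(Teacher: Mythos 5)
Your opening paragraph agrees with the paper: set $\ca{G}=\nu_*\ca{F}_\bullet$, invoke Lemma~\ref{lem:cech-glue-sheaf} for quasi-coherence and Proposition~\ref{prop:pi-sheaf-glue} for the $\pi^8$-isomorphism $\varphi:\nu_0^*\ca{G}\to\ca{F}_0$, and use Lemma~\ref{lem:pi-iso-retract}.(\ref{item:lem:pi-iso-retract-1}) to produce the $\pi^{16}$-retraction $\psi$. The divergence, and the gap, is in your second paragraph.

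The claim that the obstruction to $\widetilde u$ landing in the equalizer $\ca{G}=\mathrm{Eq}(\nu_{0*}\ca{F}_0\rightrightarrows\nu_{1*}\ca{F}_1)$ is annihilated by a bounded power of $\pi$ is false. Unwinding it, the obstruction is the difference $c_0\circ d_0^*u-c_1\circ d_1^*u$ (composed with $\psi_1$), where $d_0,d_1:X_1\to X_0$ are the two projections and $c_i:d_i^*\ca{F}_0\to\ca{F}_1$ are the $\pi$-Cartesian structure maps. The fact that each $c_i$ is a $\pi$-isomorphism gives you no control over this difference, because the surjection $u$ on $X_0=\coprod_i U_i$ is a tuple of \emph{independently chosen} surjections $u_i:\ca{O}_{U_i}^{\oplus N}\to\ca{F}_0|_{U_i}$; there is no reason for $u_i|_{U_i\cap U_j}$ and $u_j|_{U_i\cap U_j}$ to be related up to any power of $\pi$ once transported into $\ca{F}_1$. (Both restrictions ``factoring $\pi$-isomorphically through pullbacks of $\ca{F}_0$'' only says each individually lands in a copy of $\ca{F}_1$; it does not say the two maps into $\ca{F}_1$ agree.) The functoriality of $\psi$ from Lemma~\ref{lem:pi-iso-retract}.(\ref{item:lem:pi-iso-retract-1}) only yields $d_i^*\psi_0=\psi_1\circ c_i$, which rewrites the obstruction but does not bound it. There is also a smaller slip: the adjoint of $\psi\circ u:\ca{O}_{X_0}^{\oplus N}\to\nu_0^*\ca{G}$ lands in $\nu_{0*}\nu_0^*\ca{G}$, not in $\nu_{0*}\ca{F}_0$.

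The paper sidesteps this entirely by never trying to produce a global map $\ca{O}_X^{\oplus N}\to\ca{G}$. Instead it observes that $\nu^*\ca{G}\to\ca{F}_\bullet$ is already a $\pi^9$-isomorphism (deduced from the degree-$0$ case and $\pi$-Cartesianness), applies $\psi$ over each $U_i$ to get $\pi^{16}$-isomorphisms $\ca{G}_i\to\ca{G}|_{U_i}$ where $\ca{G}_i=\ca{F}_0|_{U_i}$ is of finite type, and then uses that $\ca{G}$ is a filtered union of its quasi-coherent finite-type $\ca{O}_X$-submodules to choose a single finite-type $\ca{F}\subseteq\ca{G}$ whose restriction to each $U_i$ contains the image of $\ca{G}_i$. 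Then $\ca{F}\hookrightarrow\ca{G}$ is $\pi^{16}$-surjective, and composing with the $\pi^9$-isomorphism gives the $\pi^{25}$-isomorphism. You should replace your patching step with this finite-type-submodule argument; the rest of your setup is compatible with it.
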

\begin{proof}
	If we also denote by $\nu:\mrm{sk}_r(X_\bullet)\to X$ the augmentation, then $\ca{F}'=\nu_*\ca{F}_\bullet$ is a quasi-coherent $\ca{O}_X$-module by \ref{lem:cech-glue-sheaf} and the canonical morphism $\nu_0^*\ca{F}'\to \ca{F}_0$ is a $\pi^8$-isomorphism by \ref{prop:pi-sheaf-glue}. We claim that the canonical morphism $\nu^*\ca{F}'\to \ca{F}_\bullet$ is a $\pi^9$-isomorphism. Indeed, for any integer $0\leq n\leq r$ the morphism $\nu_n:X_n\to X$ is the composition of $\nu_0:X_0\to X$ with a projection $f:X_n\to X_0$ associated to a morphism $[0]\to [n]$ in $\Delta$. Since $f^*\ca{F}_0\to \ca{F}_n$ is a $\pi$-isomorphism by assumption, we see that $\nu_n^*\ca{F}'\to \ca{F}_n$ is a $\pi^9$-isomorphism.
	
	For any $0\leq i\leq k$, we denote by $\ca{G}_i$ the restriction of $\ca{F}_0$ to the component $U_i$, which is a quasi-coherent $\ca{O}_{X_i}$-module of finite type by assumption. By \ref{lem:pi-iso-retract}.(\ref{item:lem:pi-iso-retract-2}), there exists a $\pi^{16}$-isomorphism $\ca{G}_i\to \ca{F}'|_{U_i}$. Notice that $\ca{F}'$ is the filtered union of its quasi-coherent $\ca{O}_X$-submodules of finite type by \cite[6.9.9]{ega1-2}. Thus, there is a sufficiently large quasi-coherent $\ca{O}_X$-submodule $\ca{F}$ of finite type such that the $\pi^{16}$-surjection $\ca{G}_i\to \ca{F}'|_{U_i}$ factors through $\ca{F}|_{U_i}$ for any $0\leq i\leq k$. Thus, the inclusion $\ca{F}\subseteq \ca{F}'$ is $\pi^{16}$-surjective, which implies that the induced morphism $\nu^*\ca{F}\to \ca{F}_\bullet$ is a $\pi^{25}$-isomorphism.
\end{proof}

\begin{mylem}\label{lem:cech-glue-cohom-2}
	Under the assumptions in {\rm\ref{exmp:cech}} and with the same notation, let $\ca{F}_\bullet^\bullet\to \ca{G}_\bullet^\bullet$ be a $\pi$-isomorphism of complexes of quasi-coherent $\ca{O}_{\mrm{sk}_k(X_\bullet)}$-modules. Then, the map
	\begin{align}\label{eq:lem:cech-glue-cohom-2}
		\tot(\check{C}_{\mrm{ord}}^\bullet(X_\bullet,\ca{F}_\bullet^\bullet))\longrightarrow \tot(\check{C}_{\mrm{ord}}^\bullet(X_\bullet,\ca{G}_\bullet^\bullet))
	\end{align}
	is a $\pi$-isomorphism. In particular, it is a $\pi^2$-quasi-isomorphism.
\end{mylem}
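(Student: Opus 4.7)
The plan is to reduce everything to a degreewise claim on sections, where exactness of the global sections functor for quasi-coherent sheaves on affine schemes delivers the result directly.

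First I would unwind what a \emph{$\pi$-isomorphism of complexes of quasi-coherent $\ca{O}_{\mrm{sk}_k(X_\bullet)}$-modules} means: in this abelian category it is a morphism whose termwise kernel and cokernel are annihilated by $\pi$, so the condition is equivalent to saying that for every bidegree $(p,q)$ with $0\leq p\leq k$ the component $\ca{F}_p^q\to\ca{G}_p^q$ is a $\pi$-isomorphism of quasi-coherent $\ca{O}_{X_p}$-modules. Since $X$ is quasi-compact and separated and each $U_i$ is affine, every finite intersection $U_{i_0}\cap\cdots\cap U_{i_p}$ is affine, so the exactness of the global sections functor on an affine scheme for quasi-coherent sheaves implies that each induced map
\[
\ca{F}_p^q(U_{i_0}\cap\cdots\cap U_{i_p})\longrightarrow\ca{G}_p^q(U_{i_0}\cap\cdots\cap U_{i_p})
\]
is a $\pi$-isomorphism of $R$-modules. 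By the explicit formula \eqref{eq:exmp:cech-3}, the degree-$n$ term of the total complex is a \emph{finite} direct sum (over $p+q=n$, $0\leq p\leq k$) of finite products of such section maps. Finite direct sums and finite products preserve $\pi$-isomorphisms, so the morphism \eqref{eq:lem:cech-glue-cohom-2} is a $\pi$-isomorphism in each degree, hence a $\pi$-isomorphism of complexes of $R$-modules.

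For the ``in particular'' clause, I would apply \ref{lem:pi-iso-retract}.(\ref{item:lem:pi-iso-retract-1}) in the abelian category of complexes of $R$-modules: the $\pi$-isomorphism just established produces a morphism of complexes $g$ in the opposite direction whose composites with \eqref{eq:lem:cech-glue-cohom-2} in both orders equal $\pi^2\cdot\id$. Passing to cohomology, the analogous relations hold for $H^n(g)$ and the induced map on $H^n$ of \eqref{eq:lem:cech-glue-cohom-2}, so \ref{lem:pi-iso-retract}.(\ref{item:lem:pi-iso-retract-2}) applied in the category of $R$-modules (with $\pi^2$ in place of $\pi$) shows that this induced map is a $\pi^2$-isomorphism for every $n\in\bb{Z}$. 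The only mildly subtle point in the whole argument is the initial reinterpretation of a $\pi$-isomorphism of complexes as a bi-termwise condition on quasi-coherent sheaves; once that is settled every remaining step is essentially mechanical and there is no serious obstacle.
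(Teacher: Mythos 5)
Your proof is correct and takes essentially the same approach as the paper: reduce to a degreewise statement on sections over affine opens, where quasi-coherence gives exactness, and deduce the ``in particular'' clause from Lemma \ref{lem:pi-iso-retract}. You merely spell out in more detail the two applications of \ref{lem:pi-iso-retract} (first to produce the retract $g$ with $g\circ f=\pi^2\cdot\id$ and $f\circ g=\pi^2\cdot\id$ at the level of complexes, then to pass to cohomology), which the paper compresses into a single sentence.
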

\begin{proof}
	By taking sections on an affine scheme, $\ca{F}_p^q(U_{i_0}\cap \cdots\cap U_{i_p})\to \ca{G}_p^q(U_{i_0}\cap \cdots\cap U_{i_p})$ is still a $\pi$-isomorphism. This shows that \eqref{eq:lem:cech-glue-cohom-2} is a $\pi$-isomorphism. The second assertion follows from \ref{lem:pi-iso-retract}.
\end{proof}

\begin{mylem}\label{lem:cech-glue-cohom}
	Under the assumptions in {\rm\ref{exmp:cech}} and with the same notation, let $a$ be an integer, and let $\ca{F}_\bullet^\bullet\to \ca{G}_\bullet^\bullet$ be a morphism of complexes of quasi-coherent $\ca{O}_{\mrm{sk}_k(X_\bullet)}$-modules. Assume that for any $0\leq n\leq k$, the map $H^i(\ca{F}_n^\bullet)\to H^i(\ca{G}_n^\bullet)$ is a $\pi$-isomorphism for any $i> a$ and $\pi$-surjective for $i=a$. Then, the map
	\begin{align}\label{eq:lem:cech-glue-cohom-1}
		H^i(\tot(\check{C}_{\mrm{ord}}^\bullet(X_\bullet,\ca{F}_\bullet^\bullet)))\longrightarrow H^i(\tot(\check{C}_{\mrm{ord}}^\bullet(X_\bullet,\ca{G}_\bullet^\bullet)))
	\end{align}
	is a $\pi^{4(k+1)}$-isomorphism for any $i> a+k$ and $\pi^{2(k+1)}$-surjective for $i=a+k$.
\end{mylem}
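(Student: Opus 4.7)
The plan is to reduce the asymmetric hypothesis ($\pi$-isomorphism in degrees $>a$, $\pi$-surjection in degree $a$) to a symmetric $\pi$-nullness statement by passing to the cone. Let $\ca{C}_\bullet^\bullet$ denote the cone of $\ca{F}_\bullet^\bullet\to\ca{G}_\bullet^\bullet$ in the category of complexes of quasi-coherent $\ca{O}_{\mrm{sk}_k(X_\bullet)}$-modules. Since products and finite direct sums are exact, the functor $T(-)=\tot(\check{C}_{\mrm{ord}}^\bullet(X_\bullet,-))$ carries cones to cones, so $T(\ca{C}_\bullet^\bullet)$ is the cone of $T(\ca{F}_\bullet^\bullet)\to T(\ca{G}_\bullet^\bullet)$ in $\mbf{K}(R)$. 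It will therefore suffice to show that $H^n(T(\ca{C}_\bullet^\bullet))$ is $\pi^{2(k+1)}$-null for every $n\geq a+k$, after which the cone long exact sequence yields the claimed bounds.

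For each fixed $0\leq n\leq k$, the long exact sequence $H^i(\ca{F}_n^\bullet)\to H^i(\ca{G}_n^\bullet)\to H^i(\ca{C}_n^\bullet)\to H^{i+1}(\ca{F}_n^\bullet)\to H^{i+1}(\ca{G}_n^\bullet)$ shows that for $i\geq a$, the cokernel of the left map and the kernel of the right map are both $\pi$-null (using $\pi$-surjectivity in degree $a$ and $\pi$-isomorphism in degrees $>a$), so $H^i(\ca{C}_n^\bullet)$ is an extension of two $\pi$-null modules and hence is $\pi^2$-null by \cite[2.7.14.(\luoma{2})]{abbes2020suite}. Next, I filter $T(\ca{C}_\bullet^\bullet)$ by \v Cech degree: setting $F^pT^n=\bigoplus_{p\leq p'\leq k}\prod_{i_0<\cdots<i_{p'}}\ca{C}_{p'}^{n-p'}(U_{i_0\cdots i_{p'}})$ gives a finite decreasing filtration $0=F^{k+1}T\subseteq F^kT\subseteq\cdots\subseteq F^0T=T(\ca{C}_\bullet^\bullet)$ with graded pieces $\mrm{gr}^pT\cong \check{C}_{\mrm{ord}}^p(X_\bullet,\ca{C}_p^\bullet)[-p]$. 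Since $X$ is separated, the intersections $U_{i_0}\cap\cdots\cap U_{i_p}$ are affine, so quasi-coherence of $\ca{C}_p^\bullet$ lets cohomology commute with global sections: $H^n(\mrm{gr}^pT)\cong\prod_{i_0<\cdots<i_p}H^{n-p}(\ca{C}_p^\bullet)(U_{i_0\cdots i_p})$, which is $\pi^2$-null whenever $n\geq a+p$.

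I then argue by descending induction on $p$ that $H^n(F^pT)$ is $\pi^{2(k+1-p)}$-null for $n\geq a+k$. The base case $p=k$ gives $F^kT=\mrm{gr}^kT$, which is $\pi^2$-null in this range by the graded-piece computation. For the inductive step, the long exact cohomology sequence of $0\to F^{p+1}T\to F^pT\to \mrm{gr}^pT\to 0$ exhibits $H^n(F^pT)$ as an extension of a subquotient of $H^n(\mrm{gr}^pT)$ (which is $\pi^2$-null) by a quotient of $H^n(F^{p+1}T)$ (which is $\pi^{2(k-p)}$-null by induction), hence $\pi^{2(k+1-p)}$-null by \cite[2.7.14.(\luoma{2})]{abbes2020suite}. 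Taking $p=0$ yields that $H^n(T(\ca{C}_\bullet^\bullet))$ is $\pi^{2(k+1)}$-null for $n\geq a+k$. The cone long exact sequence of $T(\ca{F}_\bullet^\bullet)\to T(\ca{G}_\bullet^\bullet)$ then produces a $\pi^{2(k+1)}$-isomorphism for $n>a+k$ (controlling both $H^{n-1}(T(\ca{C}))$ and $H^n(T(\ca{C}))$) and $\pi^{2(k+1)}$-surjectivity for $n=a+k$ (using $H^n(T(\ca{C}))$ only, since the kernel coming from $H^{n-1}(T(\ca{C}))$ is not controlled in that degree). These bounds match or beat the stated $\pi^{4(k+1)}$-isomorphism and $\pi^{2(k+1)}$-surjectivity. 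The main subtlety is simply keeping the exponent arithmetic straight; no conceptual obstacle arises, since the filtration has finite length $k+1$ and translates directly into a bound linear in $k$.
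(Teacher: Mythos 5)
Your proof is correct and follows essentially the same route as the paper's: both pass to the cone $\ca{K}_\bullet^\bullet$, show $H^i(\ca{K}_n^\bullet)$ is $\pi^2$-null for $i\geq a$, and then control $H^i(\tot(\check{C}_{\mrm{ord}}^\bullet(X_\bullet,\ca{K}_\bullet^\bullet)))$ by the column filtration of the double complex; the paper phrases this via the convergent first spectral sequence \eqref{eq:lem:cech-glue-cohom}, while you unwind it into the explicit filtration and a descending induction, which is the same argument. Your bookkeeping even yields the slightly sharper exponent $\pi^{2(k+1)}$ for the isomorphism in degrees $>a+k$, which a fortiori implies the stated $\pi^{4(k+1)}$-isomorphism.
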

\begin{proof}
	As the functor $\check{C}_{\mrm{ord}}^p(X_\bullet,-)$ \eqref{eq:exmp:cech-3} on quasi-coherent $\ca{O}_{\mrm{sk}_k(X_\bullet)}$-modules is exact, we see that $H^q(\check{C}_{\mrm{ord}}^p(X_\bullet,\ca{F}_\bullet^\bullet))=\check{C}_{\mrm{ord}}^p(X_\bullet,H^q(\ca{F}_\bullet^\bullet))$, where $H^q(\ca{F}_\bullet^\bullet)=(H^q(\ca{F}_n^\bullet))_{[n]\in\ob(\Delta_{\leq k})}$ is a quasi-coherent $\ca{O}_{\mrm{sk}_k(X_\bullet)}$-module. Thus, there is a spectral sequence
	\begin{align}\label{eq:lem:cech-glue-cohom}
		E_2^{pq}=H^p(\check{C}_{\mrm{ord}}^\bullet(X_\bullet,H^q(\ca{F}_\bullet^\bullet)))\Rightarrow H^{p+q}(\tot(\check{C}_{\mrm{ord}}^\bullet(X_\bullet,\ca{F}_\bullet^\bullet))),
	\end{align}
	which is convergent, since $\check{C}_{\mrm{ord}}^p(X_\bullet,\ca{F}_\bullet^q)=0$ unless $0\leq p\leq k$ (\cite[\href{https://stacks.math.columbia.edu/tag/0132}{0132}]{stacks-project}). 
	
	Let $\ca{K}_\bullet^\bullet$ be the cone of $\ca{F}_\bullet^\bullet\to \ca{G}_\bullet^\bullet$. The assumption implies that $H^i(\ca{K}_n^\bullet)$ is $\pi^2$-null for any $0\leq n\leq k$ and $i\geq a$. The convergent spectral sequence \eqref{eq:lem:cech-glue-cohom} for $\ca{K}_\bullet^\bullet$ implies that for any $i\in \bb{Z}$, there is a finite filtration of length $\leq (k+1)$ on $H^i(\tot(\check{C}_{\mrm{ord}}^\bullet(X_\bullet,\ca{K}_\bullet^\bullet)))$ whose graded pieces are subquotients of $E_2^{p,i-p}$ where $0\leq p\leq k$. Since $E_2^{p,i-p}$ is $\pi^2$-null for any $i\geq a+k$, we see that $H^i(\tot(\check{C}_{\mrm{ord}}^\bullet(X_\bullet,\ca{K}_\bullet^\bullet)))$ is $\pi^{2(k+1)}$-null for such $i$. The conclusion follows from the long exact sequence of cohomology groups associated to the distinguished triangle in $\mbf{K}(R)$,
	\begin{align}
		\tot(\check{C}_{\mrm{ord}}^\bullet(X_\bullet,\ca{F}_\bullet^\bullet))\longrightarrow \tot(\check{C}_{\mrm{ord}}^\bullet(X_\bullet,\ca{G}_\bullet^\bullet))\longrightarrow\tot(\check{C}_{\mrm{ord}}^\bullet(X_\bullet,\ca{K}_\bullet^\bullet))\longrightarrow\tot(\check{C}_{\mrm{ord}}^\bullet(X_\bullet,\ca{F}_\bullet^\bullet))[1].
	\end{align}
\end{proof}

\begin{myprop}\label{prop:pi-pseudo-coh-cech}
	Under the assumptions in {\rm\ref{exmp:cech}} and with the same notation, let $a\leq b$ be two integers, and let $\ca{F}_\bullet^\bullet$ be a complex of quasi-coherent $\ca{O}_{\mrm{sk}_k(X_\bullet)}$-modules vanishing in degrees $>b$. Assume that
	\begin{enumerate}
		\renewcommand{\labelenumi}{{\rm(\theenumi)}}
		\item $R$ is Noetherian, and that
		\item the $R$-module $H^p(\check{C}_{\mrm{ord}}^\bullet(X_\bullet,H^q(\ca{F}_\bullet^\bullet)))$ is of $\pi$-finite type for any $0\leq p\leq k$ and $q\geq a$ (see {\rm\ref{defn:pi-fini}}).
	\end{enumerate}
	Then, the complex of $R$-modules $\tot(\check{C}_{\mrm{ord}}^\bullet(X_\bullet,\ca{F}_\bullet^\bullet))$ is $\pi^l$-$[a+k,b+k]$-pseudo-coherent for an integer $l\geq 0$ depending only on $b-a$ and $k$.
\end{myprop}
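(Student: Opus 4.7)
My plan is to exploit the same convergent spectral sequence that already appeared in the proof of \ref{lem:cech-glue-cohom}, namely
\begin{align*}
E_2^{pq}=H^p(\check{C}_{\mrm{ord}}^\bullet(X_\bullet,H^q(\ca{F}_\bullet^\bullet)))\Rightarrow H^{p+q}(\tot(\check{C}_{\mrm{ord}}^\bullet(X_\bullet,\ca{F}_\bullet^\bullet))),
\end{align*}
which vanishes outside the vertical strip $0\leq p\leq k$. The strategy is to translate the hypothesis (the $E_2$-page consists of modules of $\pi$-finite type in the appropriate range) into a bound on the cohomology of the total complex, and then invoke \ref{prop:pi-pseudo-coh-noeth}.(\ref{item:prop:pi-pseudo-coh-noeth-1}) with the element $\pi^{k+1}$ in place of $\pi$.

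First, I would note that the total complex vanishes in degrees $>b+k$, since the bicomplex $\check{C}_{\mrm{ord}}^p(X_\bullet,\ca{F}_\bullet^q)$ vanishes unless $0\leq p\leq k$ and $q\leq b$. Next, I would analyze cohomology in the range $n\geq a+k$: by convergence of the spectral sequence, $H^n(\tot(\check{C}_{\mrm{ord}}^\bullet(X_\bullet,\ca{F}_\bullet^\bullet)))$ admits a finite filtration of length at most $k+1$ whose graded pieces are subquotients of $E_\infty^{p,n-p}$ with $0\leq p\leq k$. For such $p$, one has $n-p\geq a$, so by the hypothesis each $E_2^{p,n-p}$ is of $\pi$-finite type; being a subquotient of $E_2^{p,n-p}$, the $E_\infty$-term is of $\pi$-finite type by \ref{lem:pi-pseudo-coh-noeth}.(\ref{item:lem:pi-pseudo-coh-noeth-2}).

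Applying the second half of \ref{lem:pi-pseudo-coh-noeth}.(\ref{item:lem:pi-pseudo-coh-noeth-2}) to this filtration, I conclude that $H^n(\tot(\check{C}_{\mrm{ord}}^\bullet(X_\bullet,\ca{F}_\bullet^\bullet)))$ is of $\pi^{k+1}$-finite type for every $n\geq a+k$. Now I would apply \ref{prop:pi-pseudo-coh-noeth}.(\ref{item:prop:pi-pseudo-coh-noeth-1}) to the total complex with the pair $(a+k,b+k)$ and with the element $\pi^{k+1}$ playing the role of $\pi$: this yields $\pi^{l(k+1)}$-$[a+k,b+k]$-pseudo-coherence for some integer $l\geq 0$ depending only on $(b+k)-(a+k)=b-a$. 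The resulting overall exponent $l(k+1)$ depends only on $b-a$ and $k$, as required.

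The only delicate point is bookkeeping: one must check that the filtration length $k+1$ and the substitution $\pi\mapsto \pi^{k+1}$ compound in the stated way, and that every subquotient appearing comes from a term $E_2^{p,q}$ with $q\geq a$, so that the hypothesis of $\pi$-finite type applies. Once these indices line up, the argument is a direct assembly of \ref{lem:pi-pseudo-coh-noeth} and \ref{prop:pi-pseudo-coh-noeth}.
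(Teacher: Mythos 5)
Your argument is correct and follows essentially the same route as the paper: you use the convergent spectral sequence from the proof of \ref{lem:cech-glue-cohom}, translate the hypothesis into $\pi^{k+1}$-finite type for $H^n$ with $n\geq a+k$ via \ref{lem:pi-pseudo-coh-noeth}.(\ref{item:lem:pi-pseudo-coh-noeth-2}), and then apply \ref{prop:pi-pseudo-coh-noeth}.(\ref{item:prop:pi-pseudo-coh-noeth-1}) with $\pi^{k+1}$ in place of $\pi$. The index bookkeeping ($n-p\geq a$ for $0\leq p\leq k$ and $n\geq a+k$, compounding of exponents) is exactly what the paper's proof relies on implicitly; you have simply made it more explicit.
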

\begin{proof}
	Consider the convergent spectral sequence \eqref{eq:lem:cech-glue-cohom}. Notice that for any $0\leq p\leq k$ and $q\geq a$, a subquotient of $E_2^{pq}=H^p(\check{C}_{\mrm{ord}}^\bullet(X_\bullet,H^q(\ca{F}_\bullet^\bullet)))$ is of $\pi$-finite type by \ref{lem:pi-pseudo-coh-noeth}.(\ref{item:lem:pi-pseudo-coh-noeth-2}). Since there is a finite filtration of length $\leq (k+1)$ on $H^i(\tot(\check{C}_{\mrm{ord}}^\bullet(X_\bullet,\ca{F}_\bullet^\bullet)))$ whose graded pieces are subquotients of $E_2^{p,i-p}$ where $0\leq p\leq k$, we see that $H^i(\tot(\check{C}_{\mrm{ord}}^\bullet(X_\bullet,\ca{F}_\bullet^\bullet)))$ is of $\pi^{k+1}$-finite type for any $i\geq a+k$ by \ref{lem:pi-pseudo-coh-noeth}.(\ref{item:lem:pi-pseudo-coh-noeth-2}). As $\tot(\check{C}_{\mrm{ord}}^\bullet(X_\bullet,\ca{F}_\bullet^\bullet))$ vanishes in degrees $>b+k$ by definition, the conclusion follows from \ref{prop:pi-pseudo-coh-noeth}.(\ref{item:prop:pi-pseudo-coh-noeth-1}).
\end{proof}

\section{Almost Coherent Modules}\label{sec:coh}
In this section, we fix a ring $R$ with an ideal $\ak{m}$ such that for any integer $l\geq 1$, the $l$-th powers of elements of $\ak{m}$ generate $\ak{m}$ (in particular, $\ak{m}=\ak{m}^2$, see \cite[2.1.6.(B)]{gabber2003almost}). Let $E$ be a site with a final object $*$, and let $\ca{O}$ be a sheaf of $R$-algebras on $E$. 

\begin{mydefn}[{\cite[2.7.3]{abbes2020suite}}]\label{defn:almost}
	 Let $M$ be an $\ca{O}$-module on $E$.
	\begin{enumerate}
		\renewcommand{\labelenumi}{{\rm(\theenumi)}}
		\item We say that $M$ is \emph{almost zero} if it is $\pi$-null for any $\pi\in\ak{m}$. We say that a morphism $f:M\to N$ of $\ca{O}$-modules is an \emph{almost isomorphism} if its kernel and cokernel are almost zero.
		\item We say that $M$ is \emph{of $\pi$-finite type} for some element $\pi\in R$ if there exists a covering $\{U_i\to *\}_{i\in I}$ in $E$ such that for any $i\in I$ there exist finitely many sections $s_1,\dots,s_n\in M(U_i)$ such that the induced morphism of $\ca{O}|_{U_i}$-modules $\ca{O}^{\oplus n}|_{U_i}\to M|_{U_i}$ has $\pi$-null cokernel. We say that $M$ is \emph{of almost finite type} if it is of $\pi$-finite type for any $\pi\in\ak{m}$.
		\item We say that $M$ is \emph{almost coherent} if $M$ is of almost finite type, and if for any object $U$ of $E$ and any finitely many sections $s_1,\dots,s_n\in M(U)$, the kernel of the induced morphism of $\ca{O}|_U$-modules $\ca{O}^{\oplus n}|_U\to M|_U$ is an $\ca{O}|_U$-module of almost finite type.
	\end{enumerate}
\end{mydefn}

We refer to Abbes-Gros \cite[2.7, 2.8]{abbes2020suite} for a more detailed study of almost coherent modules. They work in a slightly restricted basic setup for almost algebra \cite[2.6.1]{abbes2020suite}, but most of their arguments still work in our setup $(R,\ak{m})$ by adding the following lemmas.

\begin{mylem}\label{lem:pi-fini}
	Let $M$ be an $\ca{O}$-module on $E$, and let $\pi_1,\pi_2\in R$. If $M$ is of $\pi_i$-finite type for $i=1,2$, then it is of $(x\pi_1+y\pi_2)$-finite type for any $x,y\in R$. In particular, if there exists an integer $l\geq 1$ such that $M$ is of $\pi^l$-finite type for any $\pi\in \ak{m}$, then $M$ is of almost finite type.
\end{mylem}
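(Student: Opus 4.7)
The plan is to proceed in two steps, corresponding to the two assertions.

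For the first claim, I refine the two given coverings to a common covering $\{W_k \to *\}_{k\in K}$ of the site, over which both witnesses for $\pi_1$- and $\pi_2$-finite type become simultaneously available by restriction. Over each $W_k$, I then have local sections $a_1,\ldots,a_{n_1}$ of $M$ (from the $\pi_1$-witness) and $b_1,\ldots,b_{n_2}$ of $M$ (from the $\pi_2$-witness) such that the associated $\ca{O}|_{W_k}$-linear maps $\varphi_i:\ca{O}^{\oplus n_i}|_{W_k}\to M|_{W_k}$ ($i=1,2$) have $\pi_i$-null cokernel. Equivalently, if $N_i$ denotes the image of $\varphi_i$, then $\pi_i\cdot \id_{M|_{W_k}}$ factors through $N_i$. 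Consequently $(x\pi_1+y\pi_2)\cdot \id_{M|_{W_k}}$ factors through $N_1+N_2$, which is exactly the image of the concatenated map $\varphi:\ca{O}^{\oplus(n_1+n_2)}|_{W_k}\to M|_{W_k}$ determined by the tuple $(a_1,\ldots,a_{n_1},b_1,\ldots,b_{n_2})$. Thus $\varphi$ has $(x\pi_1+y\pi_2)$-null cokernel, which witnesses that $M$ is of $(x\pi_1+y\pi_2)$-finite type.

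For the second assertion, fix $\pi\in\ak{m}$. By the standing hypothesis on $(R,\ak{m})$, the $l$-th powers of elements of $\ak{m}$ generate $\ak{m}$, so there is an expression $\pi=\sum_{j=1}^N x_j\pi_j^l$ with $x_j\in R$ and $\pi_j\in\ak{m}$. By assumption $M$ is of $\pi_j^l$-finite type for every $j$, and an induction on $N$ using the first part applied iteratively to the partial sums $\sum_{j\leq r}x_j\pi_j^l$ and the new term $x_{r+1}\pi_{r+1}^l$ yields that $M$ is of $\pi$-finite type. Since $\pi\in\ak{m}$ is arbitrary, $M$ is of almost finite type.

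I do not anticipate a genuine obstacle in carrying out this plan; it is pure bookkeeping on coverings and on finite sets of generators. The only mildly delicate point is the sheaf-theoretic reading of ``cokernel is $\pi$-null'' and the passage to a common refinement of coverings, both of which are routine in a Grothendieck site. The crucial algebraic input is the hypothesis that $\ak{m}$ is generated by $l$-th powers, which is what allows a single uniform $l$ to be promoted to the full almost-finite-type condition.
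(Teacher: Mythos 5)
Your proof is correct and takes essentially the same approach as the paper's: pass to a common refinement, concatenate the two finite generating tuples so that the image contains $\pi_1 M + \pi_2 M \supseteq (x\pi_1+y\pi_2)M$, and for the second claim write $\pi\in\ak{m}$ as an $R$-linear combination of $l$-th powers and iterate. The paper's version is merely more terse (``the problem is local on $E$'' and a one-line appeal to the generation hypothesis), while you spell out the refinement of coverings and the induction on the number of summands.
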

\begin{proof}
	The problem is local on $E$. We may assume that there exist morphisms of $\ca{O}$-modules $f_i:\ca{O}^{\oplus n_i}\to M$ ($i=1,2$) with $\pi_i$-null cokernels. Thus, the cokernel of $f_1\oplus f_2:\ca{O}^{\oplus n_1}\oplus\ca{O}^{\oplus n_2} \to M$ is killed by $x\pi_1+y\pi_2$. The ``in particular'' part follows from the assumption that the ideal $\ak{m}$ is generated by the subset $\{\pi^l\ |\ \pi\in\ak{m}\}$.
\end{proof}

\begin{mylem}\label{lem:almost-iso-coh}
	Let $M$ be an $\ca{O}$-module.
	\begin{enumerate}
		\renewcommand{\labelenumi}{{\rm(\theenumi)}}
		\item Assume that there exists an integer $l\geq 1$ such that for any $\pi\in\ak{m}$, there exists an almost coherent $\ca{O}$-module $M_\pi$ and a $\pi^l$-isomorphism $M\to M_\pi$. Then, $M$ is almost coherent.\label{item:lem:almost-iso-coh-1}
		\item Assume that there exists an integer $l\geq 1$ such that for any $\pi\in\ak{m}$, there exists an almost coherent $\ca{O}$-module $M_\pi$ and a $\pi^l$-isomorphism $M_\pi\to M$. Then, $M$ is almost coherent.\label{item:lem:almost-iso-coh-2}
	\end{enumerate}
\end{mylem}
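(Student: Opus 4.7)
The key input is Lemma \ref{lem:pi-iso-retract}: a $\pi^l$-isomorphism $f$ admits a ``retraction up to $\pi^{2l}$'', i.e.\ a functorial $g$ going the other way with $g\circ f=\pi^{2l}\cdot\id$ and $f\circ g=\pi^{2l}\cdot\id$. Together with Lemma \ref{lem:pi-fini}, which says that being of $\pi^N$-finite type for every $\pi\in\ak{m}$ with a uniform exponent $N$ implies almost finite type, this reduces each claim to producing, for every $\pi\in\ak{m}$, a local presentation of $M$ (resp.\ of each kernel $\ker(\ca{O}^{\oplus n}|_U\to M|_U)$) whose cokernel is killed by some power of $\pi$ bounded independently of $\pi$. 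I treat (1) in detail; (2) is the symmetric argument.

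Assume (1), let $\pi\in\ak{m}$, and let $f\colon M\to M_\pi$ be the given $\pi^l$-isomorphism, with $g\colon M_\pi\to M$ as above. Since $M_\pi$ is almost coherent, hence of $\pi$-finite type, we may locally pick $\alpha\colon\ca{O}^{\oplus n}\to M_\pi$ with $\pi$-null cokernel. A diagram chase shows that the cokernel of the composite $g\circ\alpha\colon\ca{O}^{\oplus n}\to M$ is killed by $\pi^{2l+1}$: for any local section $m$ of $M$, $\pi f(m)\in\im(\alpha)$, hence $\pi^{2l+1}m=g(\pi f(m))\in\im(g\circ\alpha)$. Applying Lemma \ref{lem:pi-fini} with $N=2l+1$ gives that $M$ is of almost finite type.

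For the kernel condition, let $\phi\colon\ca{O}^{\oplus n}|_U\to M|_U$ be any morphism given by finitely many sections, and set $K=\ker(\phi)$, $K_\pi=\ker(f|_U\circ\phi)$. Clearly $K\subseteq K_\pi$; conversely, if $x\in K_\pi$ then $\phi(x)\in\ker(f)$ is $\pi^l$-null, so $\pi^l x\in K$, giving $\pi^l K_\pi\subseteq K$. Since $M_\pi$ is almost coherent, $K_\pi$ is of almost finite type, so locally there is $\psi\colon\ca{O}^{\oplus m}\to K_\pi$ with $\pi$-null cokernel; the composite $\pi^l\psi\colon\ca{O}^{\oplus m}\to K$ is well-defined because $\pi^l K_\pi\subseteq K$, and the two-step argument $\pi k=\psi(y)$, $\pi^{l+1}k=\pi^l\psi(y)$ shows its cokernel is $\pi^{l+1}$-null. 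Thus $K$ is of $\pi^{l+1}$-finite type for every $\pi\in\ak{m}$, hence of almost finite type by Lemma \ref{lem:pi-fini}. Together with the previous paragraph, this establishes almost coherence of $M$.

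For (2), with $f\colon M_\pi\to M$ now the $\pi^l$-isomorphism and $g\colon M\to M_\pi$ its almost retraction, the same two steps go through after swapping the roles: a local presentation $\alpha\colon\ca{O}^{\oplus n}\to M_\pi$ composes with $f$ to give $\ca{O}^{\oplus n}\to M$ with $\pi^{2l+1}$-null cokernel (using $\pi\cdot g(m)\in\im(\alpha)$ and applying $f$), and for the kernel we set $K=\ker(\phi)$, $K_\pi=\ker(g|_U\circ\phi)$, verify $K\subseteq K_\pi$ and $\pi^{2l}K_\pi\subseteq K$ via $f\circ g=\pi^{2l}\cdot\id_M$, and transport a local presentation of $K_\pi$ to one of $K$ with $\pi^{2l+1}$-null cokernel. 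There is no genuine obstacle beyond bookkeeping of the exponents; the only subtle point is that in part (2) the comparison between $K$ and $K_\pi$ uses $f\circ g=\pi^{2l}\id_M$ (rather than the kernel of $f$ being $\pi^l$-null, which was what was used in part (1)).
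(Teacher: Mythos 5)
Your proof is correct and takes essentially the same approach as the paper's: both invoke Lemma \ref{lem:pi-iso-retract} to invert the given $\pi^l$-isomorphism up to a further power of $\pi$, compare $\ke(\phi)$ with $\ke(f\circ\phi)$, and conclude via Lemma \ref{lem:pi-fini}. The only cosmetic differences are that the paper states $\ke(\phi)\to\ke(f\circ\phi)$ is a $\pi^l$-isomorphism where you spell out the two inclusions, and that the paper deduces part (2) from part (1) by the retraction argument while you write out the symmetric computation directly.
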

\begin{proof}
	(\ref{item:lem:almost-iso-coh-1}) The $\pi^l$-isomorphism $M\to M_{\pi}$ induces a $\pi^{2l}$-isomorphism $M_\pi\to M$ by \ref{lem:pi-iso-retract}. Such an argument shows that (\ref{item:lem:almost-iso-coh-1}) implies (\ref{item:lem:almost-iso-coh-2}). We also see that $M$ is of $\pi^{3l}$-finite type. Hence, $M$ is of almost finite type by \ref{lem:pi-fini}. For any object $U$ of $E$, and any morphism of $\ca{O}|_U$-modules $f:\ca{O}^{\oplus n}|_U\to M|_U$, consider the following commutative diagram.
	\begin{align}
		\xymatrix{
			0\ar[r]&\ke(f)\ar[d]\ar[r]&\ca{O}^{\oplus n}|_U\ar@{=}[d]\ar[r]^-{f}&M\ar[d]\\
			0\ar[r]&\ke(f_\pi)\ar[r]&\ca{O}^{\oplus n}|_U\ar[r]^-{f_\pi}&M_\pi
		}
	\end{align}
	It is clear that $\ke(f)\to \ke(f_\pi)$ is a $\pi^l$-isomorphism. Since $M_\pi$ is almost coherent by assumption, $\ke(f_\pi)$ is of almost finite type. Hence, $\ke(f)$ is of $\pi^{3l}$-finite type by the argument in the beginning. Thus, $\ke(f)$ is of almost finite type by \ref{lem:pi-fini}. This verifies the almost coherence of $M$.
\end{proof}

We collect some basic properties about almost coherence that will be used in the rest of this article. Their proofs are essentially given in \cite{abbes2020suite}, and we only give a brief sketch here.

\begin{myprop}[{\cite[2.7.16]{abbes2020suite}}]\label{prop:almost-coh-ext}
	Let $0\longrightarrow M_1\stackrel{u}{\longrightarrow} M_2\stackrel{v}{\longrightarrow} M_3\longrightarrow 0$ be an almost exact sequence of $\ca{O}$-modules on $E$. If two of $M_1,M_2,M_3$ are almost coherent, then so is the third.
\end{myprop}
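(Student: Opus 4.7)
The plan is to adapt the proof of Abbes-Gros \cite[2.7.16]{abbes2020suite}, which is given in their slightly more restricted basic setup, to the present framework where $\ak{m}$ is only assumed to be generated by $l$-th powers of its elements for every $l\geq 1$. The proof splits into three cases according to which pair among $M_1,M_2,M_3$ is assumed almost coherent. In each case, the basic template is the same as in the coherent setting (cf.\ \cite[\href{https://stacks.math.columbia.edu/tag/01BY}{01BY}]{stacks-project}): locally choose generating families and lifts, chase a diagram, and read off the desired finiteness conditions. The technical novelty compared to the strictly coherent case is that all identifications are only valid up to $\pi$-torsion for each $\pi\in\ak{m}$.

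First I would replace the almost exact sequence by honest local approximations: for each $\pi\in\ak{m}$, the sequence becomes exact after multiplying maps and modding out $\pi$-torsion pieces, and the kernel and cokernel of $u$ and $v$ are $\pi$-null. In Case~1 ($M_1,M_2$ almost coherent), almost finite type of $M_3$ follows by composing a local almost surjection $\ca{O}^{\oplus n}|_U\to M_2|_U$ with $v$. For the kernel condition, given $f\colon \ca{O}^{\oplus n}|_U\to M_3|_U$ and $\pi\in\ak{m}$, almost surjectivity of $v$ lets us (possibly after localizing) lift $\pi\cdot f$ to $\tilde{f}\colon \ca{O}^{\oplus n}|_U\to M_2|_U$. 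The exact sequence
\begin{align}
0\longrightarrow \ke(\tilde f)\longrightarrow \ke(\pi\cdot f)\longrightarrow \tilde f(\ca{O}^{\oplus n}|_U)\cap M_1|_U\longrightarrow 0
\end{align}
has outer terms of almost finite type by almost coherence of $M_2$ and $M_1$ respectively (the intersection is a submodule of $M_1|_U$ locally generated by finitely many sections, hence of almost finite type). Consequently $\ke(\pi\cdot f)$ is of almost finite type, and $\ke(f)\subseteq \ke(\pi\cdot f)$ with $\pi$-null quotient gives a uniform $\pi^l$-finite type structure on $\ke(f)$. Case~2 ($M_2,M_3$ almost coherent) identifies $M_1$ up to almost isomorphism with a submodule of $M_2$: local generators of $M_1$ up to $\pi$-torsion are obtained by intersecting $M_2$-generators with $\ke(v)$ via the almost exact sequence, and the kernel condition reduces directly to that for $M_2$ via composition with the inclusion. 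Case~3 ($M_1,M_3$ almost coherent) glues local generators of $M_1$ with lifts of generators of $M_3$, while the kernel condition is established by applying $v$ and pulling back along $u$, reducing to the kernel conditions for $M_3$ and $M_1$.

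The key technical input specific to our setup is the uniformity of the exponents produced above. In each case the construction yields, for every $\pi\in\ak{m}$, a $\pi^l$-finite type structure with $l$ an explicit integer independent of $\pi$ (depending only on the number of almost-surjective steps chained together, typically $l\leq 4$). Lemma \ref{lem:pi-fini} then upgrades this to almost finite type, replacing the more direct passage available in the Abbes-Gros setup. Analogously, \ref{lem:almost-iso-coh} is used to deduce almost coherence of the target module from the existence of a $\pi^l$-isomorphism with an almost coherent module for every $\pi\in\ak{m}$ (with $l$ uniform), which is what the intermediate diagrams naturally produce. The main obstacle is precisely this bookkeeping: one must verify at every step that the chosen local lifts can be made independently of $\pi$ up to enlarging the exponent by a bounded additive constant, so that \ref{lem:pi-fini} and \ref{lem:almost-iso-coh} apply uniformly in $\pi\in\ak{m}$.
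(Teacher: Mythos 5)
Your overall template matches the paper's: first reduce to an honest exact sequence via Lemma \ref{lem:almost-iso-coh}, then split into three cases, do a local diagram chase with lifts, and track uniform $\pi$-exponents so that Lemma \ref{lem:pi-fini} applies. However, your Case~1 (deducing almost coherence of $M_3$ from that of $M_1$ and $M_2$) has a genuine gap. You build the exact sequence
\begin{align}
0\longrightarrow \ke(\tilde f)\longrightarrow \ke(\pi\cdot f)\longrightarrow \tilde f(\ca{O}^{\oplus n}|_U)\cap M_1|_U\longrightarrow 0
\end{align}
and assert that the third term is of almost finite type because it is ``a submodule of $M_1|_U$ locally generated by finitely many sections.'' That assertion is unjustified: over a non-Noetherian base, a submodule of a finite-type module need not be of finite type, and there is no reason for $\tilde f(\ca{O}^{\oplus n}|_U)\cap M_1|_U$ to be locally finitely generated. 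In fact, the very sequence you wrote identifies this module with $\ke(\pi\cdot f)/\ke(\tilde f)$, so establishing its almost finite type is essentially equivalent to the statement being proved; the argument is circular.

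The paper sidesteps this by never forming that intersection. Instead it augments the source: choose a $\pi$-surjection $f_1\colon \ca{O}^{\oplus m}|_U\to M_1|_U$ (possible locally since $M_1$ is of almost finite type) and a local lift $f_3'\colon \ca{O}^{\oplus n}|_U\to M_2|_U$ of the given $f$, set $f_2=(u\circ f_1,\,f_3')\colon \ca{O}^{\oplus m+n}|_U\to M_2|_U$, and apply the snake lemma to the resulting map from $0\to\ca{O}^{\oplus m}\to\ca{O}^{\oplus m+n}\to\ca{O}^{\oplus n}\to 0$ to $0\to M_1\to M_2\to M_3\to 0$. Because $\cok(f_1)$ is $\pi$-null, $\ke(f_2)\to\ke(f)$ is $\pi$-surjective, and $\ke(f_2)$ is of almost finite type by almost coherence of $M_2$; hence $\ke(f)$ is of $\pi^2$-finite type with an exponent uniform in $\pi$. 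Your Cases~2 and~3 are roughly on track but stated at a level of vagueness where the same pitfall could recur; Case~3's kernel condition in particular needs the identical snake-lemma maneuver, now with a local $\pi$-surjection onto $\ke(v\circ f_2)$ (of almost finite type by almost coherence of $M_3$) playing the role of $f_1$, rather than the informal ``pull back along $u$'' you describe.
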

\begin{proof}
	Since almost isomorphisms preserve almost coherence by \ref{lem:almost-iso-coh}, we may assume that $0\to M_1\to M_2\to M_3\to 0$ is exact.
	
	Assume that $M_2$ and $M_3$ are almost coherent. Then, $M_1$ is of almost finite type by \cite[2.7.14.(\luoma{3})]{abbes2020suite} and \ref{lem:pi-fini}. Hence, $M_1$ is almost coherent as a submodule of an almost coherent $\ca{O}$-module $M_2$ by definition.
	
	Assume that $M_1$ and $M_2$ are almost coherent. Then, $M_3$ is of almost finite type as a quotient of $M_2$. Let $U$ be an object of $E$. We need to show that any homomorphism $f_3:\ca{O}^{\oplus n}|_U\to M_3|_U$ has kernel of $\pi^2$-finite type for any $\pi\in\ak{m}$ by \ref{lem:pi-fini}. The problem is local on $E$. Thus, we may take a $\pi$-surjection $f_1:\ca{O}^{\oplus m}|_U\to M_1|_U$ and a lifting  $f_3':\ca{O}^{\oplus n}|_U\to M_2|_U$ of $f_3$. We put $f_2=(f_1,f_3'):\ca{O}^{\oplus m+n}|_U\to M_2|_U$, and obtain a morphism of short exact sequences
	\begin{align}
		\xymatrix{
			0\ar[r]&\ca{O}^{\oplus m}|_U\ar[r]\ar[d]^-{f_1}&\ca{O}^{\oplus m+n}|_U\ar[r]\ar[d]^-{f_2}&\ca{O}^{\oplus n}|_U\ar[r]\ar[d]^-{f_3}&0\\
			0\ar[r]&M_1|_U\ar[r]&M_2|_U\ar[r]&M_3|_U\ar[r]&0.
		}
	\end{align}
	The snake lemma shows that $\ke(f_2)\to \ke(f_3)$ is $\pi$-surjective. Since $\ke(f_2)$ is of almost finite type as $M_2$ is almost coherent, we see that $\ke(f_3)$ is of $\pi^2$-finite type.
	
	Assume that $M_1$ and $M_3$ are almost coherent. Then, $M_2$ is of almost finite type by \cite[2.7.14.(\luoma{2})]{abbes2020suite} and \ref{lem:pi-fini}. Let $U$ be an object of $E$. We need to show that any homomorphism $f_2:\ca{O}^{\oplus n}|_U\to M_2|_U$ has kernel of $\pi^2$-finite type for any $\pi\in\ak{m}$ by \ref{lem:pi-fini}. The problem is local on $E$. Thus, we may take a $\pi$-surjection $\ca{O}^{\oplus m}|_U\to \ke(v\circ f_2)$ as $\ke(v\circ f_2)$ is of almost finite type ($M_3$ is almost coherent). Thus, we obtain a commutative diagram
	\begin{align}
		\xymatrix{
			&\ca{O}^{\oplus m}|_U\ar[r]\ar[d]^-{f_1}&\ca{O}^{\oplus n}|_U\ar[r]\ar[d]^-{f_2}&M_3|_U\ar[r]\ar@{=}[d]&0\\
			0\ar[r]&M_1|_U\ar[r]^-{u}&M_2|_U\ar[r]^-{v}&M_3|_U\ar[r]&0.
		}
	\end{align}
	By snake lemma, we see that $\ke(f_1)\to \ke(f_2)$ is $\pi$-surjective. Since $\ke(f_1)$ is of almost finite type as $M_1$ is almost coherent, we see that $\ke(f_2)$ is of $\pi^2$-finite type.
\end{proof}

\begin{mycor}[{\cite[2.7.17]{abbes2020suite}}]\label{cor:almost-coh-ext}
	For any morphism $f:M\to N$ of almost coherent $\ca{O}$-modules, $\ke(f)$, $\im(f)$ and $\cok(f)$ are almost coherent.
\end{mycor}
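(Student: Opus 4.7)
The plan is to reduce the corollary to \ref{prop:almost-coh-ext} by first establishing that $\im(f)$ is almost coherent, and then splitting $f$ into the two short exact sequences
\begin{align}
0\longrightarrow \ke(f)\longrightarrow M\longrightarrow \im(f)\longrightarrow 0\quad\text{and}\quad 0\longrightarrow \im(f)\longrightarrow N\longrightarrow \cok(f)\longrightarrow 0.
\end{align}
Once $\im(f)$ is known to be almost coherent, \ref{prop:almost-coh-ext} applied to the first sequence yields the almost coherence of $\ke(f)$, and applied to the second sequence yields the almost coherence of $\cok(f)$.

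It therefore suffices to show that $\im(f)$ is almost coherent. First I would check that $\im(f)$ is of almost finite type. Since $M$ is of almost finite type, for any $\pi\in\ak{m}$ the problem is local on $E$, so we may assume there is a $\pi$-surjection $\ca{O}^{\oplus n}\to M$. Post-composing with $f$ yields a morphism $\ca{O}^{\oplus n}\to \im(f)$, and one sees immediately that its cokernel, being a quotient of the cokernel of $\ca{O}^{\oplus n}\to M$, is still $\pi$-null. Thus $\im(f)$ is of $\pi$-finite type for every $\pi\in\ak{m}$, hence of almost finite type.

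Next, $\im(f)$ is an $\ca{O}$-submodule of the almost coherent $\ca{O}$-module $N$. I would argue directly from the definition \ref{defn:almost} that a submodule of an almost coherent module which is itself of almost finite type is almost coherent: for any object $U$ of $E$ and any morphism $g:\ca{O}^{\oplus n}|_U\to \im(f)|_U$, the kernel of $g$ agrees with the kernel of the composition $\ca{O}^{\oplus n}|_U\to \im(f)|_U\inj N|_U$, which is of almost finite type by the almost coherence of $N$. Combined with the previous paragraph, this shows that $\im(f)$ is almost coherent, and the proof is completed by applying \ref{prop:almost-coh-ext} to the two short exact sequences above.

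There is no real obstacle here; the only subtlety is the remark that a submodule of almost finite type of an almost coherent module is automatically almost coherent, which is immediate from the definition but deserves to be spelled out since \ref{defn:almost} imposes its coherence hypothesis at the level of arbitrary finite presentations on arbitrary objects $U$ of $E$.
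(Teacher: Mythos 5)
Your proposal is correct, and it is the standard derivation of this corollary from Proposition~\ref{prop:almost-coh-ext}: the paper itself supplies no separate proof and simply cites Abbes–Gros, but the key observation you isolate — that an $\ca{O}$-submodule of almost finite type of an almost coherent $\ca{O}$-module is itself almost coherent by definition, since the relevant kernels computed in the submodule and in the ambient module coincide — is precisely the one the paper already invokes inside the proof of Proposition~\ref{prop:almost-coh-ext} (for the case where $M_2$ and $M_3$ are almost coherent). So your argument matches the intended route.
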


\begin{mycor}\label{cor:almost-coh-coh}
	Assume that $\ca{O}$ is almost coherent as an $\ca{O}$-module. Then, any cohomology group of a complex of finite free $\ca{O}$-modules is almost coherent.
\end{mycor}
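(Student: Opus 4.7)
The plan is to reduce to \ref{cor:almost-coh-ext} by first showing that finite free $\ca{O}$-modules are almost coherent under our hypothesis, and then interpreting each cohomology group as a cokernel of a morphism between almost coherent modules.

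For the first step, I would verify by induction on $n\geq 0$ that $\ca{O}^{\oplus n}$ is almost coherent as an $\ca{O}$-module. The case $n=0$ is trivial and $n=1$ is the standing assumption. For the inductive step, the short exact sequence
\begin{align}
0\longrightarrow \ca{O}\longrightarrow \ca{O}^{\oplus n+1}\longrightarrow \ca{O}^{\oplus n}\longrightarrow 0
\end{align}
together with \ref{prop:almost-coh-ext} (applied to the two almost coherent outer terms) shows that $\ca{O}^{\oplus n+1}$ is almost coherent.

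Next, given a complex $(F^\bullet,\df^\bullet)$ of finite free $\ca{O}$-modules, each differential $\df^i:F^i\to F^{i+1}$ is a morphism between almost coherent $\ca{O}$-modules by the previous step. Applying \ref{cor:almost-coh-ext}, both $\ke(\df^i)\subseteq F^i$ and $\im(\df^{i-1})\subseteq F^i$ are almost coherent. The inclusion $\im(\df^{i-1})\hookrightarrow \ke(\df^i)$ is then a morphism of almost coherent $\ca{O}$-modules, so its cokernel, which is precisely $H^i(F^\bullet)$, is almost coherent by a second application of \ref{cor:almost-coh-ext}.

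There is no real obstacle here; the statement is essentially a formal consequence of the two-out-of-three property for almost coherence in short exact sequences (\ref{prop:almost-coh-ext}) and the fact that kernels, images, and cokernels of morphisms between almost coherent modules remain almost coherent (\ref{cor:almost-coh-ext}). The only thing worth flagging is that one must begin with the $\ca{O}^{\oplus n}$ reduction, since \ref{cor:almost-coh-ext} is stated for morphisms between already-almost-coherent modules, not for arbitrary morphisms of $\ca{O}$-modules.
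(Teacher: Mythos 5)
Your proof is correct and takes essentially the same route as the paper: the paper's two-line proof cites \ref{prop:almost-coh-ext} for the almost coherence of finite free modules (which is exactly the induction on the rank you spell out) and \ref{cor:almost-coh-ext} to conclude for the cohomology groups (via kernels, images, and cokernels, as you detail). You have simply made the implicit steps explicit.
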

\begin{proof}
	A finite free $\ca{O}$-module is almost coherent by \ref{prop:almost-coh-ext}. Thus, a cohomology group of a complex of finite free $\ca{O}$-modules is almost coherent by \ref{cor:almost-coh-ext}.
\end{proof}

\begin{myprop}[{\cite[2.8.7]{abbes2020suite}}]\label{prop:almost-coh-aff}
	Let $X=\spec(A)$ be an affine scheme over $R$, let $\ca{F}$ be a quasi-coherent $\ca{O}_X$-module, and let $\pi\in\ak{m}$. Then, the $\ca{O}_X$-module $\ca{F}$ is of $\pi$-finite type (resp. of almost finite type, almost coherent) on the Zariski site of $X$ if and only if the $A$-module $\ca{F}(X)$ is of $\pi$-finite type (resp. of almost finite type, almost coherent) on the trivial site of a single point.
\end{myprop}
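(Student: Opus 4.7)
The plan is to reduce the three equivalences to careful applications of quasi-coherence combined with the quasi-compactness of affine schemes, handling $\pi$-finite type first, then observing that almost finite type follows formally, and finally treating almost coherence by a localisation argument.

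For the $\pi$-finite type part, the direction $(\Leftarrow)$ is immediate: a $\pi$-surjection $A^{\oplus n}\to M=\ca{F}(X)$ sheafifies to a morphism $\ca{O}_X^{\oplus n}\to\ca{F}$ whose (quasi-coherent) cokernel is the sheaf associated to the $\pi$-null $A$-module $\cok(A^{\oplus n}\to M)$, and the trivial covering $\{X\to X\}$ witnesses $\pi$-finite type on the Zariski site. For $(\Rightarrow)$, I would use the quasi-compactness of $X=\spec(A)$ to refine any witnessing Zariski covering to a finite covering by principal open subschemes $\{D(f_1),\dots,D(f_k)\}$. On each $D(f_i)$, pick finite sections $s_{i,1},\dots,s_{i,n_i}\in\ca{F}(D(f_i))=M_{f_i}$ such that $\ca{O}_{D(f_i)}^{\oplus n_i}\to\ca{F}|_{D(f_i)}$ is $\pi$-surjective. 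Write $s_{i,j}=t_{i,j}/f_i^{N_i}$ with $t_{i,j}\in M$; since $f_i^{N_i}$ is a unit on $M_{f_i}$, the $t_{i,j}$ generate the same $A_{f_i}$-submodule of $M_{f_i}$. Assembling all $t_{i,j}$ produces a map $A^{\oplus N}\to M$ whose associated cokernel sheaf is $\pi$-null on each $D(f_i)$; by quasi-coherence and the covering property, this cokernel is globally $\pi$-null, so $M$ is of $\pi$-finite type. The almost finite type equivalence is then a direct consequence obtained by varying $\pi\in\ak{m}$.

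For almost coherence, the direction $(\Rightarrow)$ is the easier one: $M$ is of almost finite type by the previous equivalence, and for any map $A^{\oplus n}\to M$ the kernel sheaf equals $\widetilde{K}$ where $K=\ker(A^{\oplus n}\to M)$, by quasi-coherence. Taking $U=X$ and the global sections corresponding to the standard basis of $A^{\oplus n}$ in the Zariski-almost-coherence hypothesis shows $\widetilde{K}$ is of almost finite type on the Zariski site, so by the already-proved equivalence $K$ is of almost finite type as an $A$-module. For $(\Leftarrow)$, given an open $U\subseteq X$ and sections $s_1,\dots,s_n\in\ca{F}(U)$, set $\ca{K}=\ke(\ca{O}_U^{\oplus n}\to\ca{F}|_U)$ and cover $U$ by principal opens $D(f_i)\subseteq X$. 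On $D(f_i)$, write $s_j|_{D(f_i)}=t_{i,j}/f_i^{N_i}$ with $t_{i,j}\in M$; multiplying the standard basis by a unit of $A_{f_i}$ identifies $\ca{K}|_{D(f_i)}$ with the sheaf associated to $\ker(A^{\oplus n}\to M)\otimes_A A_{f_i}$, using flatness of localisation. Since $M$ is almost coherent as an $A$-module, $\ker(A^{\oplus n}\to M)$ is of almost finite type, a property preserved under the base change $A\to A_{f_i}$; applying the $\pi$-finite type equivalence to $A_{f_i}$ then shows $\ca{K}|_{D(f_i)}$ is of almost finite type, and the cover $\{D(f_i)\}$ witnesses the almost finite type of $\ca{K}$ on $U$.

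The main obstacle is the bookkeeping in the lifting step of the $(\Rightarrow)$ direction for $\pi$-finite type, where local sections $s_{i,j}\in M_{f_i}$ must be promoted to global elements $t_{i,j}\in M$ while preserving the $\pi$-surjectivity on each piece of the covering. Everything else is formal once one notes that kernels of maps between quasi-coherent sheaves are quasi-coherent (allowing the reduction to the $\pi$-finite type equivalence) and that localisation commutes with kernels (allowing the base-change argument in the almost coherent case).
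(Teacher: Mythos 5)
Your proof is correct and, in essence, follows the same strategy as the paper: reduce the "only if'' direction to a finite covering by principal opens $D(f_i)$ and exploit quasi-coherence to pass between local and global. The one point where you diverge slightly is in the $\pi$-finite-type $(\Rightarrow)$ step: the paper writes $M=\bigcup_\lambda M_\lambda$ as a filtered union of finitely generated submodules and chooses $\lambda_0$ large enough that $M_{\lambda_0,f_i}\to M_{f_i}$ is a $\pi$-isomorphism for each $i$, whereas you clear denominators of the local generating sections directly; both arguments succeed and are equally standard. For almost coherence, the paper dismisses the reduction to the $\pi$-finite-type case as clear, while you make the localisation and flatness argument explicit (identifying $\ca{K}|_{D(f_i)}$ with $\ker(A^{\oplus n}\to M)_{f_i}$ after rescaling by $f_i^{N_i}$), which correctly fills in the gap.
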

\begin{proof}
	It is clear that the statement for ``of $\pi$-finite type'' implies that for ``of almost finite type'' and thus implies that for ``almost coherent''. It remains to show that for any $\pi\in\ak{m}$ and any finitely many elements $f_1,\dots,f_n\in A$ generating $A$ as an ideal, an $A$-module $M$ is of $\pi$-finite type if and only if the $A_{f_i}$-module $M_{f_i}$ is of $\pi$-finite type for any $1\leq i\leq n$. The necessity is obvious. For the sufficiency, we write $M=\bigcup_{\lambda\in\Lambda}M_\lambda$ as a filtered union of its $A$-submodules of finite type. There exists $\lambda_0\in\Lambda$ large enough such that $M_{\lambda_0,f_i}\to M_{f_i}$ is a $\pi$-isomorphism for any $1\leq i\leq n$. Hence, $M_{\lambda_0}\to M$ is a $\pi$-isomorphism, which completes the proof.
\end{proof}

\begin{mylem}[{cf. \cite[2.2]{kiehl1972finite}}]\label{lem:almost-coh-resol}
	Let $k\in\bb{N}$, let $X_\bullet=(X_n)_{[n]\in\ob(\Delta_{\leq k})}$ be a $k$-truncated simplicial affine scheme over $R$, and let $M_\bullet$ be a quasi-coherent $\ca{O}_{X_\bullet}$-module. Assume that the $\ca{O}_{X_\bullet}$-modules $\ca{O}_{X_\bullet}$ and $M_\bullet$ are almost coherent. Then, for any $\pi\in \ak{m}$, there exists a $\pi$-exact sequence of quasi-coherent $\ca{O}_{X_\bullet}$-modules
	\begin{align}
		\cdots\longrightarrow F_\bullet^{-1}\longrightarrow F_\bullet^{0}\longrightarrow M_\bullet,
	\end{align}
	such that $F_n^i$ is a finite free $\ca{O}_{X_n}$-module for any $i\leq 0$ and $0\leq n\leq k$.
\end{mylem}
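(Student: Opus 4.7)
The plan is to build the resolution recursively in cohomological degree: at each stage, produce a $\pi$-surjection from a ``levelwise finite free'' quasi-coherent $\ca{O}_{X_\bullet}$-module onto the previous kernel. The essential ingredient is a functor that converts a choice of $r$ global sections at a single level $[n']$ of the truncated simplicial affine scheme $X_\bullet$ into a quasi-coherent $\ca{O}_{X_\bullet}$-module.

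Concretely, for each $0 \le n' \le k$ and each $r \in \bb{N}$, I would set
\begin{align*}
(\mrm{Free}_{n'}^r)_p \;=\; \bigoplus_{\alpha \colon [n'] \to [p]} X_\alpha^*\, \ca{O}_{X_{n'}}^{\oplus r}, \qquad 0 \le p \le k,
\end{align*}
with structure maps induced on indices by composition: for $\beta \colon [p] \to [q]$ the map $X_\beta^* (\mrm{Free}_{n'}^r)_p \to (\mrm{Free}_{n'}^r)_q$ sends the $\alpha$-summand identically to the $(\beta \circ \alpha)$-summand, and the cocycle condition for the transitions of $\mrm{Free}_{n'}^r$ is immediate. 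Since $\mo_{\Delta_{\leq k}}([n'],[p])$ is finite, each $(\mrm{Free}_{n'}^r)_p$ is a finite free $\ca{O}_{X_p}$-module. A routine verification (reducing to $\alpha = \id$ and exploiting the cocycle condition for the transitions of $N_\bullet$) yields the universal property
\begin{align*}
\ho_{\ca{O}_{X_\bullet}}(\mrm{Free}_{n'}^r, N_\bullet) \;\cong\; N_{n'}(X_{n'})^{\oplus r}.
\end{align*}

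With this in hand, fix $\pi \in \ak{m}$. For each $0 \le n' \le k$, the scheme $X_{n'}$ is affine and $M_{n'}$ is quasi-coherent and almost coherent, so by Proposition~\ref{prop:almost-coh-aff} its module of global sections is of $\pi$-finite type; choose a $\pi$-surjection $\phi_{n'} \colon \ca{O}_{X_{n'}}^{\oplus r_{n'}} \to M_{n'}$ and extend it via the universal property to a morphism $\mrm{Free}_{n'}^{r_{n'}} \to M_\bullet$. Setting $F^0_\bullet := \bigoplus_{n'=0}^k \mrm{Free}_{n'}^{r_{n'}}$, the summand indexed by $n' = p$ and $\alpha = \id$ recovers $\phi_p$ at level $p$, so $F^0_\bullet \to M_\bullet$ is $\pi$-surjective levelwise while $F^0_p$ remains finite free (the sum over $n'$ being finite thanks to $k < \infty$). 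Now let $K^0_\bullet = \ke(F^0_\bullet \to M_\bullet)$. Each $F^0_n$ is almost coherent as a finite free $\ca{O}_{X_n}$-module (iterating Proposition~\ref{prop:almost-coh-ext} on short exact sequences of powers of the almost coherent $\ca{O}_{X_n}$), so $K^0_n$ is almost coherent by Corollary~\ref{cor:almost-coh-ext}. Apply the same recipe to $K^0_\bullet$ to produce $F^{-1}_\bullet \to K^0_\bullet$, compose with $K^0_\bullet \hookrightarrow F^0_\bullet$, and iterate. The augmented complex $\cdots \to F^{-1}_\bullet \to F^0_\bullet \to M_\bullet \to 0$ then has cohomology $\cok(F^0_\bullet \to M_\bullet)$ and $\cok(F^{i-1}_\bullet \to K^i_\bullet)$, both $\pi$-null by the $\pi$-surjectivity at each step, so the complex is $\pi$-exact.

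The main obstacle I foresee is the bookkeeping surrounding the ``free generator'' functor on the truncated simplicial ringed space: one needs both the cocycle compatibility of the transitions and the precise form of its universal property, and it is the finiteness of $k$ that keeps the direct sum over $n'$ levelwise finite free. The propagation of almost coherence to the iterated kernels, by contrast, follows directly from the two-out-of-three principle of Proposition~\ref{prop:almost-coh-ext} together with Corollary~\ref{cor:almost-coh-ext}.
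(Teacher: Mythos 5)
Your construction is the same as the paper's: write $N_{n'} = \ca{O}_{X_{n'}}^{\oplus r_{n'}}$ for a chosen $\pi$-surjection onto $M_{n'}$, take $F^0_p = \bigoplus_{n'}\bigoplus_{\alpha:[n']\to[p]} X_\alpha^* N_{n'}$, map the $(n',\alpha)$-summand to $M_p$ via $\alpha^*N_{n'}\to\alpha^*M_{n'}\to M_p$, and iterate on the kernel, which stays quasi-coherent and almost coherent by \ref{cor:almost-coh-ext}. The only cosmetic difference is that you package the ``sum over morphisms'' construction as a left adjoint to evaluation at $[n']$ and invoke its universal property, whereas the paper writes the maps out directly; the finiteness of $k$ and of the Hom-sets in $\Delta_{\le k}$ is what keeps each level finite free in both versions.
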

\begin{proof}
	Firstly, we construct $F_\bullet^0$. For each $0\leq n\leq k$, we take a finite free $\ca{O}_{X_n}$-module $N_n$ and a $\pi$-surjection $h_n:N_n\to M_n$ (as $M_n$ is of $\pi$-finite type, see \ref{prop:almost-coh-aff}). We put
	\begin{align}
		F_n^{0}=\bigoplus_{0\leq m\leq k}\bigoplus_{\alpha\in\ho_{\Delta}([m],[n])}\alpha^*N_m.
	\end{align}
	It forms naturally a finite free $\ca{O}_{X_\bullet}$-module $F_\bullet^{0}$. There is a natural morphism $F_n^{0}\to M_n$ defined on the $(m,\alpha)$-component by the composition
	\begin{align}
		\xymatrix{
			\alpha^*N_m\ar[r]^-{\alpha^*(h_m)}&\alpha^*M_m\ar[r]& M_n.
		}
	\end{align}
	It induces a $\pi$-surjective homomorphism of $\ca{O}_{X_\bullet}$-modules $F_\bullet^{0}\to M_\bullet$ (cf. the proof of \cite[2.2]{kiehl1972finite}). Notice that its kernel $M^{-1}_\bullet$ is also a quasi-coherent $\ca{O}_{X_\bullet}$-module which is almost coherent by \ref{cor:almost-coh-ext}. Thus, we can apply the previous procedure to $M^{-1}_\bullet$ and we construct $F_\bullet^{i}$ inductively for any $i\leq 0$.
\end{proof}

\section{Proof of the Main Theorem}\label{sec:proof}

This section is devoted to proving the following theorem.

\begin{mythm}\label{thm:main}
	Let $R$ be a ring with an ideal $\ak{m}$ such that for any integer $l\geq 1$, the $l$-th powers of elements of $\ak{m}$ generate $\ak{m}$. Consider a flat proper morphism of finite presentation $f:X\to S$ between $R$-schemes. Assume that $\ca{O}_X$ and $\ca{O}_S$ are almost coherent as modules over themselves. Then, for any quasi-coherent and almost coherent $\ca{O}_X$-module $\ca{M}$ and any $q\in\bb{N}$, $\rr^q f_*\ca{M}$ is a quasi-coherent and almost coherent $\ca{O}_S$-module.
\end{mythm}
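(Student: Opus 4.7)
The plan is to adapt Kiehl's strategy for pseudo-coherence to the almost-coherent setting, using the $\pi$-approximation machinery of Sections \ref{sec:pi-isom}, \ref{sec:ps-coh} and \ref{sec:glue} together with the almost-coherent resolutions of Section \ref{sec:coh}. As a preliminary reduction, I would assume $S=\spec(A)$ is affine, so that $\rr^qf_*\ca{M}$ corresponds to the $A$-module $H^q(X,\ca{M})$, and fix a finite affine open covering $\{U_i\}_{0\leq i\leq k}$ of $X$. Form the associated \v Cech hypercovering $X_\bullet$ of Example \ref{exmp:cech}; then \eqref{eq:cech-iso} identifies $\rr\Gamma(X,\ca{M})$ with $\tot(\check{C}_{\mrm{ord}}^\bullet(X_\bullet,\nu^*\ca{M}))$. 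Since any finitely presented $A$-module is a cokernel of a map between finite free modules and is therefore almost coherent (by Proposition \ref{prop:almost-coh-ext} and Corollary \ref{cor:almost-coh-ext} applied to the hypothesis on $\ca{O}_S$), Lemma \ref{lem:almost-iso-coh} reduces the problem, $\pi$ by $\pi$, to producing a $\pi^l$-isomorphism between $H^q(X,\ca{M})$ and some finitely presented $A$-module, where $l$ is bounded uniformly in $\pi\in\ak{m}$.

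Fix $\pi\in\ak{m}$ and a large integer $-a\gg q+k$. By Lemma \ref{lem:almost-coh-resol}, construct a $\pi$-quasi-isomorphism $\ca{F}_\bullet^\bullet\to\nu^*\ca{M}$ over the $k$-truncated simplicial affine scheme $(X_n)_{[n]\in\Delta_{\leq k}}$, where each $\ca{F}_n^i$ is a finite free $\ca{O}_{X_n}$-module concentrated in degrees $[a,0]$. Next, I would descend by Noetherian approximation: write $A=\ilim A_\lambda$ as a filtered colimit of its finitely generated $\bb{Z}$-subalgebras, descend the proper flat morphism $f$ of finite presentation to $f_\lambda:X_\lambda\to S_\lambda=\spec(A_\lambda)$, descend the affine open covering and hence the truncated simplicial scheme to $X_{\lambda,\bullet}$, and descend the complex of finite free modules to $\ca{F}_{\lambda,\bullet}^\bullet$. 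The subtle point is that the cosimplicial transition maps of $\ca{F}_\bullet^\bullet$ (for morphisms $\alpha:[m]\to[n]$ in $\Delta_{\leq k}$) are only $\pi$-quasi-isomorphisms, not strict cocycles. Section \ref{sec:pi-isom}, combined with Proposition \ref{prop:cech-glue-sheaf} from Section \ref{sec:glue}, converts this approximate cocycle condition into strict null-homotopies of multiplication by bounded powers of $\pi$ on the relevant cones, which are finite data that descend to a large enough index $\lambda$.

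Over the Noetherian scheme $X_\lambda$, Corollary \ref{cor:almost-coh-coh} yields coherence of each $H^i(\ca{F}_{\lambda,n}^\bullet)$, and applying the classical coherence Theorem \ref{thm:intro-rigid} to $f_\lambda$ then verifies the hypothesis of Proposition \ref{prop:pi-pseudo-coh-cech}. This yields that $\tot(\check{C}_{\mrm{ord}}^\bullet(X_{\lambda,\bullet},\ca{F}_{\lambda,\bullet}^\bullet))$ is $\pi^{l_1}$-$[a+k,k]$-pseudo-coherent over $A_\lambda$, with $l_1$ depending only on $a$ and $k$. Flatness of $f_\lambda$ allows one to base-change along $A_\lambda\to A$ via Proposition \ref{prop:pi-pseudo-coh-bc}, preserving this $\pi$-pseudo-coherence up to a bounded power. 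Applying Lemma \ref{lem:cech-glue-cohom} to the $\pi$-quasi-isomorphism $\ca{F}_\bullet^\bullet\to\nu^*\ca{M}$ and then Lemma \ref{lem:pi-pseudo-coh-iso} transports this to the total \v Cech complex computing $\rr\Gamma(X,\ca{M})$. In the chosen range $[a+k,k]$ (which contains degree $q$), one obtains that $H^q(X,\ca{M})$ is $\pi^{l_2}$-isomorphic to a cohomology group of a bounded complex of finite free $A_\lambda$-modules base-changed to $A$ -- in particular to a finitely presented $A$-module, hence to an almost coherent one. Letting $\pi$ range over $\ak{m}$ and invoking Lemmas \ref{lem:pi-fini} and \ref{lem:almost-iso-coh} then delivers the almost coherence.

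The principal obstacle is the descent of the approximate cosimplicial gluing of $\ca{F}_\bullet^\bullet$. A naive Noetherian approximation would not preserve the $\pi$-quasi-isomorphism cocycle relations, which are not literal equalities. The combined formalism of Sections \ref{sec:pi-isom} and \ref{sec:glue} exists precisely to record the approximate gluing as finite, strictly descendable null-homotopy data and to keep the various $\pi$-exponents uniformly bounded as $\pi$ varies in $\ak{m}$; the final step of letting $\pi$ range is useless without this uniformity. Once this is in place, the reduction to the classical Theorem \ref{thm:intro-rigid} over the Noetherian base and the flat base-change argument proceed as formal consequences of the Section \ref{sec:ps-coh} machinery.
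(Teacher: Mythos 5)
Your proposal follows essentially the same route as the paper's own proof: reduce to affine $S$, build a $\pi$-pseudo-resolution of $\nu^*\ca{M}$ by finite free modules over the truncated \v Cech hypercovering (Lemma \ref{lem:almost-coh-resol}), encode the approximate descent data as explicit null-homotopies of a bounded power of $\pi$ on the cones (Lemma \ref{lem:pi-exact-homotopy}), descend everything by Noetherian approximation, glue the coherent cohomology modules on the Noetherian model via Proposition \ref{prop:cech-glue-sheaf}, apply the classical coherence theorem, base-change back along the flat morphism via Proposition \ref{prop:pi-pseudo-coh-bc}, and compare with the \v Cech complex of $\ca{M}$ before varying $\pi$. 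Two small corrections to how you present this. First, Proposition \ref{prop:cech-glue-sheaf} is not part of producing the null-homotopy data before descent; it is invoked \emph{after} descent, over the Noetherian scheme $X_{\lambda_0}$, to assemble the $\pi$-Cartesian coherent modules $H^i(\ca{F}_{\lambda_0,\bullet}^\bullet)$ into a single coherent $\ca{O}_{X_{\lambda_0}}$-module to which classical coherence of higher direct images applies. The null-homotopies themselves come only from Lemma \ref{lem:pi-exact-homotopy}, and it is precisely these strict homotopy identities that descend. Second, in the final step, the cohomology of a bounded complex of finite free $A$-modules is \emph{not} in general finitely presented (that would require $A$ to be coherent, which is not assumed); the correct conclusion, and the one the argument needs, is that it is almost coherent, which follows from Corollary \ref{cor:almost-coh-coh} together with the hypothesis that $\ca{O}_S$ is almost coherent. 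With those adjustments your outline matches the paper's Lemmas \ref{lem:main-1} through \ref{lem:main-7}.
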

\begin{proof}
	The problem is local on $S$. Thus, we may assume that $S=\spec(A)$ is affine. Since $f$ is quasi-compact and quasi-separated, $\rr^qf_*\ca{M}$ is a quasi-coherent $\ca{O}_S$-module for any $q\in\bb{N}$. Thus, it remains to prove that $H^q(X,\ca{M})$ is an almost coherent $A$-module by \ref{prop:almost-coh-aff}. We write $X$ as a finite union of affine open subschemes $X=\bigcup_{0\leq i\leq k}U_i$ for some $k\in \bb{N}_{\geq 2}$, and consider the $k$-truncated \v Cech hypercovering $X_\bullet=(X_n)_{[n]\in\ob(\Delta_{\leq k})}$ (see \ref{exmp:cech}), where for any $0\leq n\leq k$,
	\begin{align}
		X_n=\coprod_{0\leq i_0,\dots,i_n\leq k}U_{i_0}\cap\cdots\cap U_{i_n},
	\end{align}
	which is a finite disjoint union of affine open subschemes of $X$ as $X$ is separated. Let $\nu:X_\bullet\to X$ denote the augmentation. 
	
	We fix an element $\pi\in\ak{m}$ and a negative integer $a<-(k+2)$ in the following, and take a sequence of quasi-coherent $\ca{O}_{X_\bullet}$-modules by \ref{lem:almost-coh-resol},
	\begin{align}
		0\longrightarrow \ca{F}_\bullet^a\longrightarrow\cdots\longrightarrow \ca{F}_\bullet^{-1}\longrightarrow \ca{F}_\bullet^{0}\longrightarrow \ca{M}_\bullet=\nu^*\ca{M},
	\end{align}
	such that $\ca{F}^\bullet_n\to \ca{M}_n[0]$ is a $\pi$-$[a,0]$-pseudo resolution (see \ref{defn:pi-pseudo-coh}) for any $0\leq n\leq k$. In other words, for any $a\leq i\leq 0$ and $0\leq n\leq k$,
	\begin{enumerate}
		\renewcommand{\labelenumi}{{\rm(\theenumi)}}
		\item $\ca{F}^i_n$ is a finite free $\ca{O}_{X_n}$-module, and
		\item $H^i(\ca{F}^\bullet_n)$ is $\pi$-null for any $a<i<0$, and $H^0(\ca{F}^\bullet_n)\to \ca{M}_n$ is a $\pi$-isomorphism.
	\end{enumerate}
	For any morphism $\alpha:[m]\to [n]$ in $\Delta_{\leq k}$ (regarded also as a morphism $X_n\to X_{m}$), we denote by $C_\alpha^\bullet$ the cone of the induced map $\alpha^*\ca{F}_{m}^\bullet\to \ca{F}_n^\bullet$ of complexes of finite free $\ca{O}_{X_n}$-modules.
	\begin{mylem}\label{lem:main-1}
		For any morphism $\alpha:[m]\to [n]$ in $\Delta_{\leq k}$, there exists a homomorphism of finite free $\ca{O}_{X_n}$-modules $s_{\alpha}^i:C_\alpha^i\to C_\alpha^{i-1}$ for any $i\geq a+1$ such that 
		\begin{align}
			\pi^{-4a}\cdot \id_{C_\alpha^i}=s_{\alpha}^{i+1}\circ\df_{\alpha}^i+\df_{\alpha}^{i+1}\circ s_{\alpha}^i
		\end{align}
		for any $i\geq a+1$, where $\df_{\alpha}^i:C_\alpha^i\to C_\alpha^{i+1}$ is the differential map.
	\end{mylem}
	\begin{proof}
		We firstly note that $C_\alpha^i=\ca{F}_n^i\oplus \alpha^*\ca{F}_{m}^{i+1}$ is a finite free $\ca{O}_{X_n}$-module. In particular, $C_\alpha^\bullet$ vanishes outside $[a-1,0]$. By definition, $H^i(\ca{F}_n^\bullet)\to H^i(\ca{M}_n[0])$ is a $\pi$-isomorphism for any $i>a$. Notice that the induced map $\alpha^*H^i(\ca{M}_{m}[0])\to H^i(\ca{M}_n[0])$ is an isomorphism since $\ca{M}_\bullet=\nu^*\ca{M}$. Thus, the induced map $\alpha^*H^i(\ca{F}_{m}^\bullet)\to H^i(\ca{F}_n^\bullet)$ is a $\pi^2$-isomorphism of $\ca{O}_{X_n}$-modules for any $i>a$, which implies that $H^i(C_\alpha^\bullet)$ is $\pi^4$-null for any $i>a$. Thus, the conclusion follows directly from \ref{lem:pi-exact-homotopy} (see \eqref{eq:3.3.4}).
	\end{proof}
	Now we write $A$ as a filtered union of finitely generated $\bb{Z}$-subalgebras $A=\colim_{\lambda\in\Lambda}A_\lambda$. By \cite[8.5.2, 8.8.2, 8.10.5, 11.2.6]{ega4-3}, there exists an index $\lambda_0\in\Lambda$ such that $\pi\in A_{\lambda_0}$, 
	\begin{enumerate}
		\renewcommand{\labelenumi}{{\rm(\theenumi)}}
		\item a flat proper morphism of finite presentation $f_{\lambda_0}:X_{\lambda_0}\to S_{\lambda_0}=\spec(A_{\lambda_0})$ whose base change along $S\to S_{\lambda_0}$ is $f$,
		\item affine open subschemes $U_{\lambda_0,i}$ ($0\leq i\leq k$) of $X_{\lambda_0}$ whose base change along $S\to S_{\lambda_0}$ is $U_i$,
		\item a complex of finite free $\ca{O}_{X_{\lambda_0,\bullet}}$-modules $0\to\ca{F}_{\lambda_0,\bullet}^a\to\cdots\to \ca{F}_{\lambda_0,\bullet}^{-1}\to \ca{F}_{\lambda_0,\bullet}^{0}\to 0$ whose pullback along $S\to S_{\lambda_0}$ is $\ca{F}^\bullet_\bullet$ (where $X_{\lambda_0,\bullet}$ is the $k$-truncated \v Cech hypercovering associated to $X_{\lambda_0,0}=\coprod_{0\leq i\leq k}U_{\lambda_0,i}\to X_{\lambda_0}$),
		\item a homomorphism of finite free $\ca{O}_{X_{\lambda_0,n}}$-modules $s_{\lambda_0,\alpha}^i:C_{\lambda_0,\alpha}^i\to C_{\lambda_0,\alpha}^{i-1}$ for any $i\geq a+1$ and any morphism $\alpha:[m]\to [n]$ in $\Delta_{\leq k}$, such that
		\begin{align}\label{eq:main-2}
			\pi^{-4a}\cdot \id_{C_{\lambda_0,\alpha}^i}=s_{\lambda_0,\alpha}^{i+1}\circ\df_{\lambda_0,\alpha}^i+\df_{\lambda_0,\alpha}^{i+1}\circ s_{\lambda_0,\alpha}^i
		\end{align}
		for any $i\geq a+1$, where $C_{\lambda_0,\alpha}^\bullet$ is the cone of $\alpha^*\ca{F}_{\lambda_0,m}^\bullet\to \ca{F}_{\lambda_0,n}^\bullet$, and $\df_{\lambda_0,\alpha}^i:C_{\lambda_0,\alpha}^i\to C_{\lambda_0,\alpha}^{i+1}$ is the differential map.
	\end{enumerate}
	We note that $X_{\lambda_0}$ and $S_{\lambda_0}$ are Noetherian schemes.
	 \begin{mylem}\label{lem:main-2}
	 	For any morphism $\alpha:[m]\to [n]$ in $\Delta_{\leq k}$, the induced map of coherent $\ca{O}_{X_{\lambda_0,n}}$-modules $\alpha^*H^i(\ca{F}_{\lambda_0,m}^\bullet)\to H^i(\ca{F}_{\lambda_0,n}^\bullet)$ is a $\pi^{-4a}$-isomorphism for any $i\geq a+2$.
	 \end{mylem}
 	\begin{proof}
 		It follows directly from the relation \eqref{eq:main-2}. 
 	\end{proof}
 	\begin{mylem}\label{lem:main-3}
 		For any $i\geq a+2$, there exists a coherent $\ca{O}_{X_{\lambda_0}}$-module $\ca{G}_{\lambda_0}^i$ and a $\pi^{-100a}$-isomorphism $\nu_{\lambda_0}^*\ca{G}_{\lambda_0}^i\to H^i(\ca{F}_{\lambda_0,\bullet}^\bullet)$, where $\nu_{\lambda_0}:X_{\lambda_0,\bullet}\to X_{\lambda_0}$ is the augmentation.
 	\end{mylem}
 	\begin{proof}
 		It follows by applying directly \ref{prop:cech-glue-sheaf} to the coherent $\ca{O}_{X_{\lambda_0,\bullet}}$-module $H^i(\ca{F}_{\lambda_0,\bullet}^\bullet)$, whose condition is satisfied by \ref{lem:main-2}.
 	\end{proof}
 	\begin{mylem}\label{lem:main-4}
 		The $A_{\lambda_0}$-module $H^i(\check{C}_{\mrm{ord}}^\bullet(X_{\lambda_0,\bullet},H^j(\ca{F}_{\lambda_0,\bullet}^\bullet)))$ is of $\pi^{-200a}$-finite type for any $0\leq i\leq k$ and $j\geq a+2$.
 	\end{mylem}
	\begin{proof}
		Notice that by \eqref{eq:cech-iso} and \ref{lem:main-3}, we have
		\begin{align}
			H^i(\check{C}_{\mrm{ord}}^\bullet(X_{\lambda_0,\bullet},\nu_{\lambda_0}^*\ca{G}_{\lambda_0}^j))=H^i(X_{\lambda_0},\ca{G}_{\lambda_0}^j),
		\end{align}
		which is an $A_{\lambda_0}$-module of finite type, since $X_{\lambda_0}$ is proper over the Noetherian scheme $\spec(A_{\lambda_0})$. Thus, $H^i(\check{C}_{\mrm{ord}}^\bullet(X_{\lambda_0,\bullet},H^j(\ca{F}_{\lambda_0,\bullet}^\bullet)))$ is of $\pi^{-200a}$-finite type by \ref{lem:cech-glue-cohom-2} and \ref{lem:main-3}.
	\end{proof}
	\begin{mylem}\label{lem:main-5}
		The complex of $A_{\lambda_0}$-modules $\tot(\check{C}_{\mrm{ord}}^\bullet(X_{\lambda_0,\bullet},\ca{F}_{\lambda_0,\bullet}^\bullet))$ is $\pi^l$-$[a+k+2,k]$-pseudo-coherent for an integer $l\geq 0$ depending only on $a$ and $k$.
	\end{mylem}
	\begin{proof}
		It follows directly from \ref{prop:pi-pseudo-coh-cech} whose conditions are satisfied by \ref{lem:main-4}.
	\end{proof}
	\begin{mylem}\label{lem:main-6}
		The complex of $A$-modules $\tot(\check{C}_{\mrm{ord}}^\bullet(X_{\bullet},\ca{F}_{\bullet}^\bullet))$ is $\pi^l$-$[a+k+2,k]$-pseudo-coherent for an integer $l\geq 0$ depending only on $a$ and $k$.
	\end{mylem}
	\begin{proof}
		By \ref{prop:pi-pseudo-coh-bc} and \ref{lem:main-5}, $\tau^{\geq (a+k+2)}(A\otimes_{A_{\lambda_0}}^{\dl}\tot(\check{C}_{\mrm{ord}}^\bullet(X_{\lambda_0,\bullet},\ca{F}_{\lambda_0,\bullet}^\bullet)))$ is represented by a $\pi^l$-$[a+k+2,k]$-pseudo-coherent complex of $A$-modules for an integer $l\geq 0$ depending only on $a$ and $k$. Since $f_{\lambda_0}:X_{\lambda_0}\to S_{\lambda_0}$ is flat, $A\otimes_{A_{\lambda_0}}^{\dl}\tot(\check{C}_{\mrm{ord}}^\bullet(X_{\lambda_0,\bullet},\ca{F}_{\lambda_0,\bullet}^\bullet))$ is also represented by $A\otimes_{A_{\lambda_0}}\tot(\check{C}_{\mrm{ord}}^\bullet(X_{\lambda_0,\bullet},\ca{F}_{\lambda_0,\bullet}^\bullet))=\tot(\check{C}_{\mrm{ord}}^\bullet(X_{\bullet},\ca{F}_{\bullet}^\bullet))$. Hence, $\tau^{\geq (a+k+2)}(\tot(\check{C}_{\mrm{ord}}^\bullet(X_{\bullet},\ca{F}_{\bullet}^\bullet))$ is $\pi^l$-$[a+k+2,k]$-pseudo-coherent for an integer $l\geq 0$ depending only on $a$ and $k$ by \ref{prop:pi-pseudo-coh-derived}. The conclusion follows from applying \ref{lem:pi-pseudo-coh-iso}.(\ref{item:lem:pi-pseudo-coh-iso-2}) to $\tot(\check{C}_{\mrm{ord}}^\bullet(X_{\bullet},\ca{F}_{\bullet}^\bullet))\to\tau^{\geq (a+k+2)}(\tot(\check{C}_{\mrm{ord}}^\bullet(X_{\bullet},\ca{F}_{\bullet}^\bullet))$.
	\end{proof}
	\begin{mylem}\label{lem:main-7}
		The complex of $A$-modules $\check{C}_{\mrm{ord}}^\bullet(X_{\bullet},\ca{\ca{M}}_{\bullet})$ is $\pi^l$-$[a+k+2,k]$-pseudo-coherent for an integer $l\geq 0$ depending only on $a$ and $k$.
	\end{mylem}
	\begin{proof}
		Since $\ca{F}_\bullet^\bullet\to \ca{M}_\bullet[0]$ is a $\pi$-$[a,0]$-pseudo resolution, the map $H^i(\tot(\check{C}_{\mrm{ord}}^\bullet(X_{\bullet},\ca{F}_{\bullet}^\bullet)))\to H^i(\check{C}_{\mrm{ord}}^\bullet(X_{\bullet},\ca{\ca{M}}_{\bullet}))$ is a $\pi^{4(k+1)}$-isomorphism for any $i\geq a+k+1$ by \ref{lem:cech-glue-cohom}. The conclusion follows from \ref{lem:pi-pseudo-coh-iso}.(\ref{item:lem:pi-pseudo-coh-iso-1}) and \ref{lem:main-6}.
	\end{proof}

Recall that $\rr\Gamma(X,\ca{M})$ is represented by the ordered \v Cech complex $\check{C}_{\mrm{ord}}^\bullet(X_{\bullet},\ca{\ca{M}}_{\bullet})$ by \eqref{eq:cech-iso}. As we have taken $a<-(k+2)$ in the beginning, we see that for any $q\in\bb{N}$, $H^q(X,\ca{M})=H^q(\check{C}_{\mrm{ord}}^\bullet(X_{\bullet},\ca{\ca{M}}_{\bullet}))$ is $\pi^l$-isomorphic to the $q$-th cohomology $H'^q$ of a complex of finite free $A$-modules for an integer $l\geq 0$ depending only on $a$ and $k$ by \ref{lem:main-7} and \ref{defn:pi-pseudo-coh} (taking $a=-k-3$ at first is actually enough for this argument). Since $A$ is an almost coherent $A$-module by applying \ref{prop:almost-coh-aff} to $\ca{O}_S$, we see that $H'^q$ is an almost coherent $A$-module by applying \ref{cor:almost-coh-coh} to the trivial site of a single point with structural sheaf given by $A$. Since $l$ is independent of the choice of $\pi\in\ak{m}$, the $A$-module $H^q(X,\ca{M})$ is almost coherent by \ref{lem:almost-iso-coh}, which completes the proof of our main theorem \ref{thm:main}.
\end{proof}

\section{Remark on Abbes-Gros' Construction of the Relative Hodge-Tate Spectral Sequence}\label{sec:remark}

\begin{mypara}
	Let $K$ be a complete discrete valuation field of characteristic $0$ with algebraically closed residue field of characteristic $p>0$. Let $(f,g):(X'^{\triangleright}\to X')\to (X^\circ\to X)$ be a morphism of open immersions of quasi-compact and quasi-separated schemes over $\spec(K)\to \spec(\ca{O}_K)$. Consider the following conditions:
	\begin{enumerate}
		\renewcommand{\labelenumi}{{\rm(\theenumi)}}
		\item The associated log schemes $(X',\scr{M}_{X'})$ and $(X,\scr{M}_X)$ endowed with the compactifying log structures are adequate in the sense of \cite[\Luoma{3}.4.7]{abbes2016p} (which holds for instance if the open immersions $X'^{\triangleright}\to X'$ and $X^\circ\to X$ are semi-stable over $\spec(K)\to \spec(\ca{O}_K)$, see \cite[5.11]{he2024falmain}).\label{remark:1}
		\item The morphism of log schemes $(X',\scr{M}_{X'})\to (X,\scr{M}_X)$ is smooth and saturated.\label{remark:2}
		\item The morphism of schemes $g:X'\to X$ is projective.\label{remark:3}
	\end{enumerate}
	Under these assumptions, Abbes-Gros proved Faltings' main $p$-adic comparison theorem in the relative case for the morphism $(f,g)$ \cite[5.7.4]{abbes2020suite}, and constructed a relative Hodge-Tate spectral sequence \cite[6.7.5]{abbes2020suite} (for an explicit local version, see \cite[6.9.6]{abbes2020suite} and \cite[1.4]{he2021coh}). We explain that their proof and construction are still valid if we replace the assumption (\ref{remark:3}) by the following assumption:
	\begin{enumerate}
		\renewcommand{\labelenumi}{{\rm(\theenumi)}}
		\item[(3)'] The morphism of schemes $g:X'\to X$ is proper.
	\end{enumerate}
\end{mypara}

\begin{mypara}
	The assumption on the projectivity of $g$ has been only used in the proof of \cite[5.3.31]{abbes2020suite}. There, they encountered a Cartesian diagram of schemes
	\begin{align}
		\xymatrix{
			\overline{X}'^{(\infty)}\ar[d]_-{g^{(\infty)}}\ar[r]&X'\ar[d]^-{g}\\
			\overline{X}^{(\infty)}\ar[r]&X
		}
	\end{align}
	where $\overline{X}^{(\infty)}$ is an $\ca{O}_{\overline{K}}$-scheme such that $\ca{O}_{\overline{X}^{(\infty)}}$ and $\ca{O}_{\overline{X}'^{(\infty)}}$ are almost coherent as modules over themselves (\cite[5.3.5.(\luoma{2})]{abbes2020suite}), and a quasi-coherent and almost coherent $\ca{O}_{\overline{X}'^{(\infty)}}$-module $\scr{G}$. For proving the almost coherence of $\rr^ig^{(\infty)}_*\scr{G}$, they applied \cite[2.8.18]{abbes2020suite} where the assumption on the projectivity of $g$ had been used. 
	
	Now we replace the assumption (\ref{remark:3}) by the assumption (3)', by replacing \cite[2.8.18]{abbes2020suite} by our main theorem \ref{thm:main}. Indeed, the morphism $g$ is flat by the assumption (\ref{remark:2}) (see \cite[4.5]{kato1989log}), proper by the assumption (3)', of finite presentation by the assumptions (3)' and (\ref{remark:1}) (as $X$ is locally Noetherian). Hence, so is the morphism $g^{(\infty)}$ by base change. Therefore, we deduce the almost coherence of $\rr^ig^{(\infty)}_*\scr{G}$ from our main theorem \ref{thm:main}.
\end{mypara}

\bibliographystyle{myalpha}
\bibliography{bibli}


\end{document}